\numberwithin{equation}{section}
\newtheorem{theorem}{Theorem}[section]
\newtheorem{lemma}[theorem]{Lemma}
\newtheorem{proposition}[theorem]{Proposition}
\theoremstyle{definition}
\newtheorem{definition}[theorem]{Definition}
\theoremstyle{remark}
\newtheorem{remark}[theorem]{Remark}
\theoremstyle{remark}
\newtheorem{assumption}[theorem]{Assumption}
\newcommand{\inn}{\text{in }}
\newcommand{\onn}{\text{on }}
\newcommand{\spann}{\mathrm{span}}
\DeclareMathOperator{\im}{im}
\title{Internal doubly periodic gravity-capillary waves with vorticity
}
\author{Douglas Svensson Seth}
\thanks{Department of Mathematical Sciences, Norwegian University of Science and Technology, \text{douglas.s.seth@ntnu.no}}
\begin{document}
\begin{abstract}
		We consider a multi-fluid system with several free interfaces. For this system we prove existence of three-dimensional steady gravity-capillary waves with non-zero vorticity. We obtain non-zero vorticity by prescribing the relative velocity fields to be Beltrami fields, for which the vorticity and velocity are parallel. The main result is a multi-parameter bifurcation result for small amplitude waves given in two variants: a first theorem guaranteeing existence under some general parameter assumptions; and a second specific but less exhaustive theorem, for which the assumptions may be explicitly verified, yielding the existence of both in-phase and off-phase motions in the different layers. The proof relies on an implicit function theorem corresponding to multi-parameter bifurcation. This theorem is presented in an appendix as an abstract result that can be applied directly to other problems. 
\end{abstract}
\maketitle
\section{Introduction}
	In this paper we consider $ n+1 $ immiscible fluids separated by $ n $ free boundaries. All the fluids are contained within the domain
	\[
	\Omega=\{\bm{x}=(\bm{x}',z)=(x,y,z)\in\mathbb{R}^2\times \mathbb{R}:0<z<d_{n+1}\}.
	\]
	which itself is separated into the $ n+1 $ layers given by
	\[
	\Omega_j=\{(\bm{x}',{z})\in\mathbb{R}^2\times\mathbb{R}:\eta_{j-1}+d_{j-1}<z<\eta_j+d_{j}\},\qquad j=1,\ldots,n+1,
	\]
	for $n$ different interface profiles $\eta_j$ ($ \eta_0=\eta_{n+1}=0 $ as well as $d_0=0$); see \cref{fig:domain}. 
	\begin{figure}
		\begin{center}
		\includegraphics[scale=1]{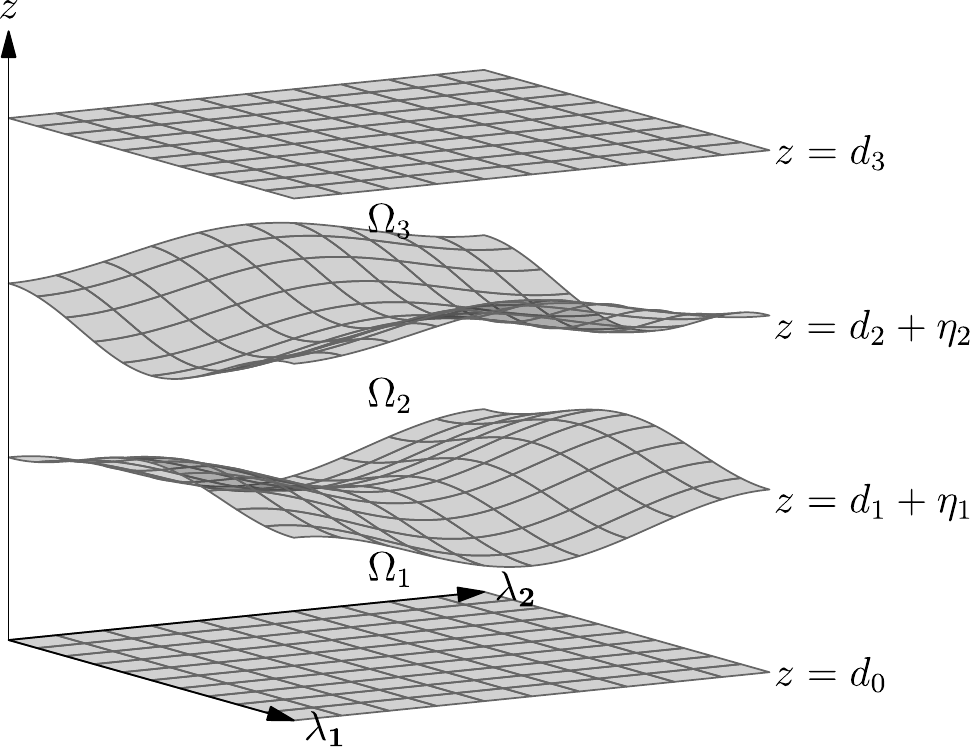}
		\caption{The domain $ \Omega $ for $ n=2 $. In this case it is separated into three parts, $ \Omega_1 $, $ \Omega_2 $, $ \Omega_3 $, by the interfaces at $ z=d_1+\eta_1 $ and $ z=d_2+\eta_2 $. This figure is restricted to `one period' of $ \Omega $ determined by the vectors $ \bm{\lambda}_1 $ and $ \bm{\lambda}_2 $.}
		\label{fig:domain}
		\end{center}
	\end{figure}
	$ \Omega_j $ contains the $ j $:th fluid. In each layer we assume that the fluid has constant density $ \rho_j $ and that the velocity field $ \bm{w}^{(j)} $ of the fluid satisfies the Euler equations with an external gravitational force $ \bm{g}=(0,0-g) $. In the remainder of the paper we assume that the densities always satisfy the condition $ \rho_1>\rho_2>\ldots>\rho_{n+1} $.
	We also work under the traveling wave assumption, that is, the velocity fields and interface profiles are time independent in some frame of reference moving with constant speed $\bm{c}$. In other words, the waves travel with speed $\bm{c}$.
	Instead of working directly with $\bm{w}^{(j)}$ we will work with the relative velocity field in the frame of reference moving with the waves $\bm{u}^{(j)}$. This relative velocity field is obtained by setting $\bm{u}^{(j)}(\bm{x})=\bm{w}^{(j)}(\bm{x})-\bm{c}$ and it solves the steady Euler equations
	\begin{align}
	(\bm{u}^{(j)}\cdot\nabla)\bm{u}^{(j)}&=-\frac{1}{\rho_j}\nabla{p}^{(j)}+\bm{g},\label{eq:eulermoment}\\
	\nabla\cdot \bm{u}^{(j)}&=0.\label{eq:eulerdiv}
	\end{align}
	From now on we will just refer to $\bm{u}^{(j)}$ as the velocity field. We assume that the velocity fields are \emph{Beltrami fields}, which means that the vorticity is parallel to the velocity. In particular, we shall assume that the velocity field is a \emph{strong Beltrami field} in each layer. Here \emph{strong} means that the proportionality factor between velocity and vorticity is a constant. In other words, $ \nabla \times\bm{u}^{(j)}=\alpha_j\bm{u}^{(j)} $ in $ \Omega_j $ for some constant $\alpha_j$. However, we do allow $ \alpha_j\neq\alpha_i $ for $ j\neq i $. For mathematical reasons this is a very suitable vorticity assumption when working with the Euler equations. The identity
	\[
		\nabla \frac{1}{2}\vert \bm{u}\vert^2=(\bm{u}\cdot\nabla)\bm{u}+\bm{u}\times(\nabla \times \bm{u})
	\]
	means that the momentum equation of the Euler equations, \cref{eq:eulermoment}, is satisfied with pressure given by
	\begin{equation}\label{eq:pressure}
	p^{(j)}=-\rho_{j}\left(\frac{1}{2}\vert\bm{u}^{(j)}\vert^2+gz\right)+Q_j
	\end{equation}
	if $ \bm{u}^{(j)} $ is a Beltrami field, for an arbitrary constant $ Q_j $; like in the irrotational case, which is the special case $ \alpha_j=0 $.
	Moreover, we introduce two linearly independent vectors $ \bm{\lambda}_1$ and $ \bm{\lambda}_2 $, which allows us to define the lattice
	\[
	\Lambda=\{\bm{\lambda}=m_1\bm{\lambda}_1+m_2\bm{\lambda}_2:m_1,m_2\in\mathbb{Z}\}.
	\]
	We shall assume that our solutions, whence the waves, are periodic with respect to this lattice; see \cref{fig:domain}. For future reference we also introduce the dual lattice
	\[
		\Lambda'=\{\bm{k}=m_1\bm{k}_1+m_2\bm{k}_2:m_1,m_2\in\mathbb{Z}\}
	\]
	where $\bm{\lambda}_i\cdot \bm{k}_j=2\pi\delta_{ij}$. For our analysis it is suitable to express the vectors in the dual lattice in polar form, so we let $\bm{k}=k(\cos(\gamma),\sin(\gamma))$ for some general vector $\bm{k}\in\Lambda'$ and in particular $\bm{k}_i=k_i(\cos(\gamma_i),\sin(\gamma_i))$, $i=1,2$. We also denote `one period' with respect to $\Lambda$ by
	\[
	\Gamma=\{(x,y)\in\mathbb{R}^2: (x,y)=a\bm{\lambda}_1+b\bm{\lambda}_2, 0<a<1,0<b<1\}.
	\]
	and one period of $ \Omega_j $ by
	\[
		\hat{\Omega}_j=\{(x,y,z)\in\Omega_j: (x,y)\in \Gamma\}.
	\]	
	In summary, the velocity fields satisfy the equations
	\begin{align}
		{\nabla} \times {\bm{u}}^{(j)}&=\alpha_j {\bm{u}}^{(j)}	&&\inn \Omega_j,\label{eq:curl}\\
		{\nabla} \cdot {\bm{u}}^{(j)}&=0 && \inn \Omega_j,\label{eq:div}
	\end{align}
	for $ j=1,\ldots,n+1 $, and since the fluids are assumed to be immiscible we get the boundary conditions 
	\begin{align}
		\bm{n}\cdot {\bm{u}}^{(j)}&=0 && \onn \partial\Omega_j,\label{eq:kin_bdry}
	\end{align}
	for $j=1,\ldots,n+1$ and the (upwards) unit normal $ \bm{n} $ of $\partial \Omega_j $.
	Moreover, the pressure difference between $ \Omega_j $ and $ \Omega_{j+1} $ at their shared boundary satisfies the Young--Laplace law
	\begin{align*} 
		p_{j+1}-p_{j}&=-\sigma_j\nabla \cdot \bm{n} &&\onn z=\eta_{j}+d_j,
	\end{align*}
	for $j=1,\ldots,n$. Here $ \sigma_j $ denotes an interfacial tension parameter.
	We can substitute the pressure using \cref{eq:pressure} and obtain
	\begin{align}
		\nonumber\rho_j\left(\frac{1}{2}\vert{\bm{u}}^{(j)}\vert^2+g(\eta_{j}+d_j)\right)-\rho_{j+1}&\left(\frac{1}{2}\vert{\bm{u}}^{(j+1)}\vert^2+g(\eta_{j}+d_j)\right)\\
		-\sigma_{j}{\nabla} \cdot \left(\frac{{\nabla} \eta_{j}}{\sqrt{1+\vert {\nabla} \eta_{j}\vert^2}}\right)&=Q_j-Q_{j+1}&&\onn z=\eta_{j}+d_j,\label{eq:dyn_bdry}
	\end{align}
	for $j=1,\ldots,n$. We normalize the pressure, that is, choose the $Q_j $, in \Cref{sec:triv_sol}.
	The \crefrange{eq:curl}{eq:dyn_bdry} constitute a free boundary problem with several undetermined interfaces $\eta_j$. 
	
	In the present paper we will also allow $ \rho_{n+1}=0 $, to be able to capture interaction between surface waves and internal waves. Note that with $ \rho_{n+1}=0 $ and $ n=1 $ we recover the classical water wave problem for surface waves. Setting $ \rho_{n+1}=0 $ completely decouples the problem from the uppermost layer, so we introduce
	\[
	m=\left\{
	\begin{aligned}
	&n+1 && \text{if  } \rho_{n+1}>0,\\
	&n && \text{if  } \rho_{n+1}=0,
	\end{aligned}\right.
	\]
	to be able to handle both cases simultaneously.
	Moreover, we will use the following notation
	\begin{align*}
	&\bm{u}=(\bm{u}^{(1)},\ldots,\bm{u}^{(m)}),
	&&\bm{\eta}=(\eta_1,\ldots \eta_n), &&&\bm{\sigma}=(\sigma_1,\ldots,\sigma_n),\\ &\bm{\rho}=(\rho_1,\ldots,\rho_{m}), &&\bm{\alpha}=(\alpha_1,\ldots,\alpha_{m}),
	&&&\bm{d}=(d_1,\ldots,d_{m}).
	\end{align*}
	\subsection{Background}
	This type of free boundary problem has been extensively studied in two dimensions. Especially the case with two fluids separated by one free interface, that is, $ n=1 $; see for example \cite{Amick_1989, Dias_1996, Nilsson_2016, Sun_1993}. One of the common applications of a multi-fluid system like this is the study of internal waves in an ocean stratified by, for example, temperature or salinity; see \cite[section 7]{Haziot_2022} for an overview. However, the two-layer model that is usually studied gives an idealized version of an ocean with almost constant, but different, densities in two layers separated by a sharp density gradient. If the gradient is sufficiently sharp then it is intuitive to approximate it with a single layer given by a free interface; this has also been rigorously justified under certain conditions by Chen and Walsh \cite{Chen_2015}. Naturally with this approximation we may lose some of the finer detail from a varying density, which has been studied numerically by Vanden-Broeck and Turner \cite{Vanden_Broeck_1992}. To recapture some of that detail is one of the reason we study more than two layers. This have been proposed before, by for example Rusås and Grue \cite{Rusas_2002}, and there is at least numerical support that several layers allows us to recapture some phenomena lost by collapsing the change in density to a single layer; see for example Nakayama and Lamb \cite{Nakayama_2020}. 
	
	In three dimensions most existing results treat only surface waves. The first rigorous existence result for doubly periodic waves on a symmetric lattice is due to Reeder and Shinbrot \cite{Reeder_1981}; this was extended to general lattices Craig and Nicholls \cite{Craig_2000}. Both these results are based on bifurcation theory, albeit Craig and Nicholls also employs a variational approach to the problem. Another method that has proven useful is that of spacial dynamics; first used for the water wave problem in three dimensions by Groves and Mielke \cite{Groves_2001} and Groves and Haragus \cite{Groves_2003}. All these are results for gravity-capillary waves. This is due to a small-divisor problem that appears for pure gravity waves in three dimensions, making the problem in many ways easier with surface tension. The existence of surface gravity waves has however been proven by Iooss and Plotnikov \cite{Iooss_2010,Iooss_2009} using Nash-Moser techniques. In the present paper interfacial tension is included as it resolves the small-divisor problem. Similarly, in the dynamical setting nonzero interfacial tension resolves the Kelvin–Helmholtz instability for high frequencies; see Lannes \cite{Lannes_2013}. Since our result is valid for arbitrarily small, although non-zero, interfacial tension we should be able to provide solutions were the interfacial tension brings stability without having a large impact on the wave profiles. For internal waves in three dimensions there is one existence result relying on spacial dynamics, due to Nilsson \cite{Nilsson_2019}.
	
	The problem studied in this paper includes vorticity, due to the assumption that the velocity fields are Beltrami fields. Waves with vorticity have been studied extensively in two dimensions, but in three dimensions the results are more sparse. For surface waves in three dimensions with vorticity there is a non-existence result for constant vorticity due to Wahlén \cite{Wahlen_2014}. There are also two existence results in the same setting; the first due to Lokharu, Seth and Wahlén \cite{Lokharu_2020}, which is similar in nature to the present contribution in that the velocity is assumed to be a Beltrami field; the second, by Seth, Varholm and Wahlén \cite{Seth_2022}, is based on a different vorticity assumption, which is inspired by a mathematically equivalent problem in plasma physics. There is one result considering internal waves in three dimensions with vorticity. Chen, Fan, Walsh and Wheeler \cite{Chen_2022} show that if the vorticity is constant then the solutions are very restricted.
	
	\subsection{Main result and structure of the article}
	In the present paper we obtain an existence result for three dimensional, internal waves with nonzero vorticity. The main result requires additional technical definitions to be stated precisely, but we give a summarized version here.
	\begin{theorem}\label{thm:main_intro}
	Under appropriate assumptions on the parameters, $ \bm{\sigma},\bm{\rho},\bm{\alpha},\bm{d}, \bm{\lambda}_1,\bm{\lambda}_2 $, there exists an $ \epsilon>0 $, such that for every $\bm{t}=(t_1,t_2)\in B_{\epsilon}(0)=\{\bm{t}\in \mathbb{R}^2:\vert \bm{t}\vert <\epsilon\}$ there exist solutions $\bm{u}^{(j)}(\bm{t})$, $j=1,\ldots,m$, and $\eta_j(\bm{t})$, $ j=1,\ldots,n $ to \crefrange{eq:curl}{eq:dyn_bdry}. Moreover, the interface profiles are given by
	\[
	\eta_j=t_1\hat{\eta}_j(\bm{k}_1)\cos(\bm{k}_1\cdot \bm{x}')+t_2\hat{\eta}_j(\bm{k}_2)\cos(\bm{k}_2\cdot\bm{x}')+\mathcal{O}(\vert \bm{t}\vert^2).
	\]
	for some real numbers $ \hat{\eta}_j(\bm{k}_1) $, $ \hat{\eta}_j(\bm{k}_2) $, $ j=1,\ldots,n $ depending on the parameters.
	\end{theorem}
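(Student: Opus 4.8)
The plan is to recast the free-boundary system \crefrange{eq:curl}{eq:dyn_bdry} as a single equation $\mathcal{F}(\bm{\eta},\bm{q})=0$ for a smooth map between Banach spaces, with $\bm{q}=(q_1,q_2)$ a two-dimensional bifurcation parameter built from the physical data (for instance from the constants governing the laminar Beltrami flow), and then to invoke the abstract multi-parameter implicit function theorem of the appendix. First I would flatten the $m$ fluid layers: for small $\bm{\eta}$ the maps sending $(\bm{x}',z)$ to $\bm{x}'$ together with the affine interpolation of $z$ between $\eta_{j-1}+d_{j-1}$ and $\eta_j+d_j$ are diffeomorphisms onto the reference layers $\{d_{j-1}<z<d_j\}$, depending analytically on $\bm{\eta}$, and pulling \crefrange{eq:curl}{eq:kin_bdry} back through them produces a div--curl--Beltrami system on fixed layers with $\bm{\eta}$-dependent coefficients. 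I would work in the scale of $\Lambda$-periodic Hölder (or $L^2$-Sobolev) spaces of functions that are \emph{even} in $\bm{x}'$, so that only the modes $\cos(\bm{k}\cdot\bm{x}')$ survive --- matching the form of the conclusion --- and choose the orders so that the interfacial-tension term in \eqref{eq:dyn_bdry}, which costs two derivatives of $\bm{\eta}$, maps between the chosen spaces.

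The next step is a two-stage reduction. For each fixed small $\bm{\eta}$ I would first solve the linear problem \crefrange{eq:curl}{eq:kin_bdry} for the velocity $\bm{u}=\bm{u}[\bm{\eta}]$: on each perturbed layer this is a div--curl system with spectral parameter $\alpha_j$, and --- provided $\alpha_j$ is not a Beltrami eigenvalue of that layer (one of the ``appropriate assumptions'', which is open and hence persists for small $\bm{\eta}$) and the correct cohomological/flux normalization is imposed to pin down the solution --- it is uniquely solvable, the solution depending smoothly on $\bm{\eta}$, with $\bm{u}[0]$ the laminar field constructed in \Cref{sec:triv_sol}. Substituting $\bm{u}[\bm{\eta}]$ and the normalization of the Bernoulli constants $Q_j$ into the $n$ dynamic boundary conditions \eqref{eq:dyn_bdry} reduces the whole system to $\bm{\mathcal{G}}(\bm{\eta},\bm{q})=0$ for a smooth map $\bm{\mathcal{G}}$ with the crucial property $\bm{\mathcal{G}}(0,\bm{q})\equiv 0$: flat interfaces always give a laminar solution.

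Then I would linearize at the trivial solution. Because the reference layers and the laminar flow are invariant under horizontal translation, $L(\bm{q}):=D_{\bm{\eta}}\bm{\mathcal{G}}(0,\bm{q})$ acts as a Fourier multiplier on the lattice modes, $L(\bm{q})\big(\bm{a}\cos(\bm{k}\cdot\bm{x}')\big)=\big(M(\bm{k};\bm{q})\bm{a}\big)\cos(\bm{k}\cdot\bm{x}')$, for an $n\times n$ symbol $M(\bm{k};\bm{q})$ encoding the dispersion relation of the layered system. The parameter hypotheses are exactly those making the spectral picture as simple as possible: (i) $\det M(\bm{k}_1;\bm{q})=\det M(\bm{k}_2;\bm{q})=0$; (ii) $\det M(\bm{k};\bm{q})\neq 0$ for every other $\bm{k}\in\Lambda'$, where positivity of the $\sigma_j$ is essential since it forces $\|M(\bm{k};\bm{q})^{-1}\|$ to stay bounded as $|\bm{k}|\to\infty$, so there is no small-divisor obstruction and $L(\bm{q})$ restricts to an isomorphism on a closed complement of its kernel; (iii) $\ker M(\bm{k}_i;\bm{q})$ is one-dimensional, spanned by $\hat{\bm{\eta}}(\bm{k}_i)=(\hat{\eta}_1(\bm{k}_i),\dots,\hat{\eta}_n(\bm{k}_i))$; and (iv) a transversality condition: $\bm{q}\mapsto(\det M(\bm{k}_1;\bm{q}),\det M(\bm{k}_2;\bm{q}))$ has invertible differential at the base value. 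Thus $\ker L(\bm{q})=\spann\{\hat{\bm{\eta}}(\bm{k}_1)\cos(\bm{k}_1\cdot\bm{x}'),\ \hat{\bm{\eta}}(\bm{k}_2)\cos(\bm{k}_2\cdot\bm{x}')\}$ is precisely two-dimensional and $L(\bm{q})$ is Fredholm of index zero.

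Finally, I would feed this into the appendix's multi-parameter implicit function theorem, which here replaces a classical Lyapunov--Schmidt reduction: writing $\bm{\eta}=t_1\hat{\bm{\eta}}(\bm{k}_1)\cos(\bm{k}_1\cdot\bm{x}')+t_2\hat{\bm{\eta}}(\bm{k}_2)\cos(\bm{k}_2\cdot\bm{x}')+\bm{\psi}$ with $\bm{\psi}$ in a fixed complement of $\ker L$, the complementary equation is solved for $\bm{\psi}=\bm{\psi}(\bm{t},\bm{q})=\mathcal{O}(|\bm{t}|^2)$ by the ordinary implicit function theorem using the isomorphism from (ii), leaving a two-dimensional reduced equation that vanishes identically at $\bm{t}=0$; the transversality condition (iv) is precisely what the abstract theorem needs to solve this reduced equation for $\bm{q}=\bm{q}(\bm{t})$, $\bm{q}(0)$ the base value, for all $\bm{t}\in B_\epsilon(0)$. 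Unwinding the flattening and setting $\bm{u}^{(j)}(\bm{t})=\bm{u}^{(j)}[\bm{\eta}(\bm{t})]$ then produces a solution of \crefrange{eq:curl}{eq:dyn_bdry} with the stated leading-order expansion, the possible sign patterns of $\hat{\bm{\eta}}(\bm{k}_i)$ across the layers accounting for the in-phase and off-phase motions referred to in the introduction. I expect the main obstacle to be the second step: establishing unique, $\bm{\eta}$-smooth solvability of the Beltrami div--curl system on the perturbed layers --- making the function spaces, the eigenvalue exclusion and the flux normalization mesh --- and then computing $M(\bm{k};\bm{q})$ explicitly enough to verify, against the limited supply of physical parameters, both the resonance conditions (i), (iii) and the transversality condition (iv), the latter verification presumably being the content of the paper's second, explicit theorem.
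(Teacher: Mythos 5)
Your overall route --- flattening, solving the Beltrami div--curl system for prescribed $\bm{\eta}$ under a non-resonance condition with a flux normalization, reduction to the interfaces, a Fourier-multiplier linearization with an $n\times n$ symbol, and a two-parameter bifurcation in the laminar-flow constants --- is essentially the paper's, and your spectral hypotheses correspond to \Cref{assumption:first} (your determinant transversality is equivalent to invertibility of the matrix $\bm{\nu}$ there, since the kernels at $\bm{k}_1,\bm{k}_2$ are simple). The genuine gap is in the final step, where you assert that after solving the complementary equation the two-dimensional reduced equation ``vanishes identically at $\bm{t}=0$'' and can then be solved for $\bm{q}(\bm{t})$ by the transversality condition. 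As stated this fails: because the trivial solutions exist for \emph{every} parameter value, the reduced map $\Phi(\bm{t},\bm{q})$ vanishes identically in $\bm{q}$ at $\bm{t}=0$, so $D_{\bm{q}}\Phi(0,\bm{q}^*)=0$ and the implicit function theorem cannot be applied to $\Phi$ itself; as in Crandall--Rabinowitz one must first factor $t_i$ out of the $i$-th projected equation and apply the implicit function theorem to the quotient $H_i$.

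Factoring out $t_i$ (rather than merely $\vert\bm{t}\vert$) requires that the $i$-th projected equation vanish on the whole hyperplane $\{t_i=0\}$, not only at the origin, and with a two-dimensional kernel this is not automatic. This is exactly condition \textit{(iv)} of \Cref{thm:bif}: one needs closed subspaces invariant under the nonlinear operator so that switching off one kernel mode keeps the Lyapunov--Schmidt remainder inside a subspace annihilated by $P_i$. The paper arranges this by taking $\hat{\mathcal{X}}_i\oplus\tilde{\mathcal{X}}_i$ and $\hat{\mathcal{Y}}_i\oplus\tilde{\mathcal{Y}}_i$ to consist of functions constant in the direction $\bm{\lambda}_l$, $l\neq i$, which rests on the additional structural fact, proved as the last statement of \Cref{prop:reduction_to_bdry}, that if $\bm{\eta}$ is constant in one lattice direction then so is $\bm{v}[\bm{\eta},\bm{\tau}]$, hence so is $\bm{F}[\bm{\eta},\bm{\tau}]$. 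Your hypothesis list omits this invariance, and without it (or some replacement) the reduced system cannot be solved for $\bm{q}(\bm{t})$ on a full neighbourhood $B_\epsilon(0)$, so the stated conclusion would not follow; supplying this verification closes the gap and brings your argument in line with the paper's proof of \Cref{thm:exist}.
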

	The detailed version of this result is given in \Cref{thm:exist}. The assumptions referenced in the theorem is not obviously satisfied, and do indeed fail for some parameter values. For this reason we show that there exists a non-empty subset of the parameter space where the assumptions are satisfied, with the corresponding existence results given in \Cref{thm:existsn1,thm:existssmallvort}. These results cover a large part of the parameter space, but a complete characterization lies beyond the scope of this paper.
	
	The overall structure of the proof of \Cref{thm:main_intro} is reminiscent of \cite{Lokharu_2020} and relies on multi-parameter bifurcation. To this end, we finish the introduction by defining the trivial solutions that the non-trivial solutions in \Cref{thm:main_intro} bifurcate from. In \cref{sec:Func_set} we set up suitable function spaces for the remaining analysis. In \Cref{sec:flat} we change coordinates to a flattened domain and reduce the problem to a single equation for the free interfaces. In \Cref{sec:lin_prob} we study the linearized version of this reduced problem. With the results from \Cref{sec:lin_prob} we state a purely algebraic assumption used for the main existence result. Both the assumption and main existence result are given, and in the latter case proved, in \Cref{sec:existence}. We end by showing that this assumption is satisfied in certain subsets of the parameter space in \Cref{sec:assumption}. We focus on two cases: limiting the number of layers to two, that is $  n=1 $, and considering weak vorticity, that is $ \vert \bm{\alpha}\vert\ll 1 $. However, we also include an informal discussion and some examples of other cases in \Cref{sec:largeVorticity}. \Cref{sec:AppendixBif} contains a multi-parameter bifurcation result that is integral in the proof of the main theorem. Similar techniques have been used repeatedly in the literature (for example in \cite{Ehrnstrom_2011,Ehrnstrom_2019,Lokharu_2020}), but only for special cases. Here, on the other hand, we have abstracted the previously used ideas and present them in a general result. This will give a handy tool for future research, since it can be directly applied to similar problems.
	
	\subsection{Trivial solutions}\label{sec:triv_sol}
	For flat interfaces, $\bm{\eta}=0$, we can find laminar flows that are explicit solutions with nonzero velocity by considering the basis functions
	\begin{align*}
		\bm{V}^{(j)}_1(z)&=(\cos(\alpha_j z),-\sin(\alpha_j z),0),\\
		\bm{V}^{(j)}_2(z)&=(\sin(\alpha_j z),\cos(\alpha_j z),0).\\
	\end{align*}
	With any $ \bm{r}=(r_1,\ldots,r_m)\in [0,\infty)^m$ and $\bm{\theta}=(\theta_1,\ldots,\theta_m)\in (\mathbb{R}/2\pi\mathbb{Z})^m$ we can construct a solution $\bm{U}=(\bm{U}^{(1)},\ldots,\bm{U}^{(m)})$ to \crefrange{eq:curl}{eq:kin_bdry} which is given by 
	\[
	\bm{U}^{(j)}(z)=r_j\cos(\theta_j)\bm{V}_1^{(j)}(z)+r_j\sin(\theta_j)\bm{V}_2^{(j)}(z)=r_j(\cos(\theta_j-\alpha_jz),\sin(\theta_j-\alpha_jz),0).
	\]
	Now we pick the $ Q_j $ in such a way that these velocity fields also satisfy \cref{eq:dyn_bdry}. This can be done by setting 
	\[ 
		Q_j=\rho_j\frac{r_j^2}{2}+C_j,\qquad j=1,\ldots,m,
	\]
	where the $ C_j $ satisfy
	\[ 
		C_j-C_{j+1}=\rho_jgd_j-\rho_{j+1}gd_j,\qquad j=1,\ldots,n .
	\]
	This leaves us with one degree of freedom in the pressure, that is, we can add the same constant to all $ C_{j} $. This does not effect the mathematical problem, though, and we can remove this freedom by simply setting $ C_1=0 $. To further decrease the degrees of freedom we will only keep $ r_1 $ and $ \theta_1 $ as free parameters and define the other $ r_j $ and $ \theta_j $ as functions of these through the recursive relationships
	\[ 
	\theta_{j+1}=\theta_j-\alpha_jd_j+\alpha_{j+1}d_j,\qquad\text{and}\qquad r_{j+1}=r_j.
	\]
	\emph{These are the relations that leaves us with a continuous trivial solution if we glue together all $ \bm{U}^{(j)} $ to one function in $ \Omega $}. However, there is no mathematical requirement for continuity and all $ r_j $ and $ \theta_j $ could be kept as free parameters.
	In fact, keeping them all as free parameters could potentially let us handle bifurcation from a point were the kernel of the linearization has higher dimension. However, in this paper we restrict ourselves to a two-dimensional kernel, which means two bifurcation parameters $ r_1 $ and $ \theta_1 $ are sufficient. For notational simplicity we drop the indices from $ r_1 $ and $ \theta_1 $, and let $ \bm{\tau}= (r,\theta)\in[0,\infty)\times (\mathbb{R}/2\pi\mathbb{Z})\eqqcolon \mathfrak{Z} $. With these choices \cref{eq:dyn_bdry} become
	\begin{align*}
	\rho_j\left(\frac{1}{2}\vert{\bm{u}}^{(j)}\vert^2+g\eta_{j}\right)-\rho_{j+1}&\left(\frac{1}{2}\vert{\bm{u}}^{(j+1)}\vert^2+g\eta_{j}\right)\\
	-\sigma_{j}{\nabla} \cdot \left(\frac{{\nabla} \eta_{j}}{\sqrt{1+\vert {\nabla} \eta_{j}\vert^2}}\right)&=(\rho_j-\rho_{j+1})\frac{r^2}{2}&&\onn z=\eta_{j}+d_j,
	\end{align*}
	which is satisfied by $ \bm{U} $ and $ \bm{\eta}=0 $.
	
	Now we impose the following integral conditions
	\begin{align}\label{eq:int_cond}
		\int_{\hat{\Omega}_j} {u}_i^{(j)}d{\bm{x}}&=\int_{\hat{\Omega}_j} U_i^{(j)}[\bm{\tau}]d{\bm{x}}&&i=1,2,
	\end{align}
	for $j=1,\ldots,m$, in addition to the equations \crefrange{eq:curl}{eq:dyn_bdry}. They will allow us to  find unique  $ \bm{u}^{(j)} $ solving \crefrange{eq:curl}{eq:kin_bdry} for given $ \bm{\eta} $ and $\bm{\tau}$, which allows us to reduce the problem to \cref{eq:dyn_bdry} with $ \bm{\eta} $ and $ \bm{\tau} $ as unknowns. In particular, for $ \bm{\eta}=0 $ we obtain $ \bm{u}^{(j)}=\bm{U}^{(j)}[\bm{\tau}] $, that solve \cref{eq:dyn_bdry} for all $ \bm{\tau}\in \mathfrak{Z} $.
	
	\section{Functional analytic setting}\label{sec:Func_set}
		We work in the real valued Hölder spaces $C^{k,\delta}_{per}(X)$, where $ X=\overline{\Omega}_j$ for the velocity fields and $ X=\mathbb{R}^2 $ for the interfaces. These are Banach spaces equipped with the norm
		\[
			\Vert f\Vert_{C^{k,\delta}_{per}(X)}=\max_{\vert \mu\vert\leq k}\sup_{\bm{x}\in X}\vert \partial^\mu f(\bm{x})\vert+ \max_{\vert \mu\vert= k}\sup_{\bm{x}\neq\bm{y}\in\overline{\Omega}_j}\frac{\vert \partial^\mu f(\bm{x})-\partial^\mu f(\bm{y}) \vert}{\vert\bm{x}-\bm{y}\vert^\delta},
		\]
		where $\delta$ is a fixed number in $(0,1)$. The subscript \emph{per} denotes that they are restricted to functions that are periodic with respect to $ \Lambda $.
		Moreover, we want the velocity fields and interfaces to satisfy certain symmetry conditions.
		By a subscript $e$ we denote functions that are even with respect to $\bm{x}'$ and by a subscript $o$ we denote functions that are odd with respect to $\bm{x}'$.
		That is,
		\begin{align*}
			C^{k,\delta}_{per,e}(\overline{\Omega}_j)&=\{f\in C^{k,\delta}_{per}(\overline{\Omega}_j):f(-\bm{x}',z)=f(\bm{x}',z)\},\\
			C^{k,\delta}_{per,o}(\overline{\Omega}_j)&=\{f\in 	C^{k,\delta}_{per}(\overline{\Omega}_j):f(-\bm{x}',z)=-f(\bm{x}',z)\},
		\end{align*}
		and likewise for $C^{l,\delta}_{per}(\mathbb{R}^2)$. After the flattening transform in the next section it is clear that the natural relation between $ k $ and $ l $ is $ l=k+1 $. Therefore we seek solutions in $\bm{u}^{(j)}\in (C^{1,\delta}_{per,e}(\overline{\Omega}_j))^2\times C^{1,\delta}_{per,o}(\overline{\Omega}_j)\eqqcolon \mathfrak{X}_j$ and $\eta_j\in C^{2,\delta}_{per,e}(\mathbb{R}^2)\eqqcolon\mathfrak{Y}_j$, which is the lowest regularity in these spaces that allow solutions in the classical sense. We also let
		\begin{align*}
			\mathfrak{X}&\coloneqq\mathfrak{X}_1\times\ldots \times\mathfrak{X}_m,\\
			\mathfrak{Y}&\coloneqq\mathfrak{Y}_1\times\ldots \times\mathfrak{Y}_n,
		\end{align*}
		so that $ \bm{u}\in \mathfrak{X} $ and $ \bm{\eta}\in \mathfrak{Y} $. For future reference we also introduce the lower regularity spaces $ \mathfrak{W}_j\coloneqq C^{0,\delta}_{per,e}(\mathbb{R}^2) $ and $ \mathfrak{W}\coloneqq \mathfrak{W}_1\times\ldots\times \mathfrak{W}_n $.
		
		Due to the periodicity and symmetries we can express $ \bm{u}^{(j)}\in \mathfrak{X}_j $ as a Fourier series
		\[ 
			\bm{u}(\bm{x}',z)=\sum_{\bm{k}\in\Lambda'} (\hat{u}_{1}(z,\bm{k}),\hat{u}_{2}(z,\bm{k}),\hat{u}_{3}(z,\bm{k}))e^{i\bm{k}\cdot \bm{x}'},
		\]
		where $ \hat{u}_{i}(z,\bm{k}) $ are real valued and satisfy $\hat{u}_{i}(z,\bm{k})=\hat{u}_{i}(z,-\bm{k})$, for $ i=1,2 $ and $ \hat{u}_{3}(z,\bm{k}) $ are imaginary and satisfy $ \hat{u}_{3}(z,\bm{k})=- \hat{u}_{3}(z,-\bm{k}) $. An analogous Fourier series representation exist for $ \eta_j\in C^{2,\delta}_{per,e}(\mathbb{R}^2) $
		
		For an operator $ F:\mathcal{X}\times \mathcal{Y}\to \mathcal{Z} $, where $ \mathcal{X}$,  $\mathcal{Y}$ and $\mathcal{Z}$ are Banach spaces, we denote the Fréchet derivative at $ (x_0,y_0)\in \mathcal{X}\times \mathcal{Y} $ by $DF[x_0,y_0]$. Moreover, by $ D_x F[x_0,y_0] $ and $D_y F[x_0,y_0]$ we denote the Fréchet derivatives of $ F[\cdot,y_0]:\mathcal{X}\to \mathcal{Z} $ and $ F[x_0,\cdot]:\mathcal{X}\to \mathcal{Z} $ at $ x_0\in \mathcal{X} $ and at $ y_0\in\mathcal{X} $, respectively. Finally, we note that
		\[ 
			DF[x_0,y_0](x,y)=D_xF[x_0,y_0](x)+D_yF[x_0,y_0](y)
		\]
		for $ F\in C^1(\mathcal{X}\times \mathcal{Y}, \mathcal{Z}) $.

		\section{Flattening}\label{sec:flat}
		To avoid unnecessarily complicated notation after we change variables, we change notation for the physical frame, and denote the coordinates and functions expressed in these coordinates with a bar, for example $\bar{\bm{u}}(\bar{\bm{x}})$.
		We flatten the domains $\Omega_j$ using a naive flattening given by
		\begin{align*}
			(\bar{x},\bar{y},\bar{z})=\Phi_j(x,y,z)&=\left(x,y,\left(1+\frac{\eta_j-\eta_{j-1}}{d_j-d_{j-1}}\right)z+\frac{d_j\eta_{j-1}-d_{j-1}\eta_j}{d_j-d_{j-1}}\right)\eqqcolon (x,y,\varphi_j(x,y,z)).
		\end{align*}
		Since we only are concerned with small amplitude waves this flattening transformation is sufficient, and it is conceptually easy to understand.
		$\Phi_j$ maps
		\[
			\Omega_j^0=\{(\bm{x}',z)\in \mathbb{R}^2\times \mathbb{R}: -d_{j-1}<z<d_j\}
		\]
		to $\Omega_j$ as long as the interfaces do not intersect, that is, if $\eta_{j-1}+d_{j-1}<\eta_{j}+d_j$ for $ j=1,\ldots,m $.
		The flattening transformation has Jacobian matrix
		\[ 
			\mathcal{J}_j\coloneqq D\Phi_j=\begin{pmatrix}
			1 & 0 & 0\\
			0 & 1 & 0\\
			\partial_x\varphi_j & \partial_y\varphi_j & \partial_z\varphi_j
			\end{pmatrix},
		\]
		with determinant
		\[
		J_j(\bm{x})\coloneqq \partial_z\varphi_j=1+\frac{\eta_j-\eta_{j-1}}{d_j-d_{j-1}}.
		\]
		For scalar functions, $ \bar{f}:\Omega_j\to \mathbb{R} $ we define $ f:\Omega^0_j\to \mathbb{R} $  through
		\[ 
		f=\bar{f}\circ \Phi,
		\]
		and for vectors fields, $ \bar{\bm{u}}^{(j)}:\Omega_j\to \mathbb{R}^3 $, we define $ \bm{u}^{(j)}:\Omega^0_j\to \mathbb{R}^3 $ through
		\[ 
		{\bm{u}}^{(j)}\circ{\Phi}^{-1}=J_j \mathcal{J}^{-1}_j\bar{\bm{u}}^{(j)},
		\]
		which corresponds to expressing the vector fields in terms of a position-dependent, and not necessarily orthonormal, basis. We also note that this transformation preserves the regularity and symmetry, that is, if $ \bm{\eta}\in \mathfrak{Y} $ then $ \bar{\bm{u}}^{(j)}\in \mathfrak{X}_j $ if and only if $ \bm{u}^{(j)}\in (C^{1,\delta}_{per,e}(\overline{\Omega_j^0}))^2\times C^{1,\delta}_{per,o}(\overline{\Omega_j^0})\eqqcolon \mathfrak{X}^0_j $. We also define $ \mathfrak{X}^0 $ analogously to $ \mathfrak{X} $.
		We use this flattening to transform our original free boundary problem to a problem in the fixed domains.
		\begin{proposition}\label{prop:flat_prob}
			Assume $\alpha_j(d_j-d_{j-1})\notin 2\pi\mathbb{Z}\setminus\{0\}$, then the equations
			\begin{align}
			\nabla\times \bm{v}^{(j)}-\alpha_j{\bm{v}}^{(j)}&=\nabla\times \bm{N}_j[\bm{v},\bm{\eta},\bm{\tau}]&&\inn \Omega_j^0,\label{eq:curl_flat}\\
			\nabla \cdot\bm{v}^{(j)}&=0&&\inn \Omega_j^0,\label{eq:div_flat}\\
			v_3^{(j)}&=U_1^{(j)}\partial_x\eta_{j-1}+U_2^{(j)}\partial_y\eta_{j-1} &&\onn z=d_{j-1},\label{eq:bdry_bottom_flat}\\
			v_3^{(j)}&=U_1^{(j)}\partial_x\eta_{j}+U_2^{(j)}\partial_y\eta_{j}&&\onn z=d_j,\label{eq:bdry_top_flat}\\
			\int_{\hat{\Omega}_j^0}v_i^{(j)}d\bm{x}&=0&& i=1,2\label{eq:int_flat}\\
			\intertext{for $ j=1,\ldots, m $ and}
			B_j[\bm{v},\bm{\eta},\bm{\tau}]+R_j[\bm{v},\bm{\eta},\bm{\tau}]&=0 && \onn z=d_j\label{eq:dyn_flat}
			\end{align}
			for $j=1,\ldots,n$, with $ \bm{\eta}\in \mathfrak{Y} $, and $\bm{v}=(\bm{v}^{(1)},\ldots,\bm{v}^{(m)})\in \mathfrak{X}^0$ are equivalent to the \crefrange{eq:curl}{eq:int_cond} with $ \bm{\eta}\in \mathfrak{Y} $ and $\bar{\bm{u}}\in \mathfrak{X}$, where
			\begin{align*}
			B_j[\bm{v},\bm{\eta},\bm{\tau}]&=\rho_j(U_1^{(j)} v^{(j)}_1+U_2^{(j)} v^{(j)}_2)
			-\rho_{j+1}(U_1^{(j+1)}v_1^{(j+1)}+U_2^{(j+1)}v_2^{(j+1)})\\
			&\qquad+(\rho_j-\rho_{j+1})g\eta_{j}-\sigma_{j}\Delta \eta_{j},
			\end{align*}
			and $ \bm{N}_j:\mathfrak{X}^0\times \mathfrak{Y}\times \mathfrak{Z}\to \mathfrak{X}_j $ and $ R_j:\mathfrak{X}^0\times \mathfrak{Y}\times \mathfrak{Z}\to \mathfrak{W}_j $ are some operators that satisfy
			 \begin{align*}
			 \bm{N}_j[0,0,\bm{\tau}]&=D_{\bm{v}}\bm{N}_j[0,0,\bm{\tau}](\bm{v})=D_{\bm{\eta}}\bm{N}_j[0,0,\bm{\tau}](\bm{\eta})=0,\\ R_j[0,0,\bm{\tau}]&=D_{\bm{v}}R_j[0,0,\bm{\tau}](\bm{v})=D_{\bm{\eta}}R_j[0,0,\bm{\tau}](\bm{\eta})=0.
			 \end{align*}
		\end{proposition}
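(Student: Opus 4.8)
The plan is to obtain the flattened system by first subtracting the laminar flow $\bm{U}^{(j)}[\bm{\tau}]$ from $\bar{\bm{u}}^{(j)}$ in the physical frame, and only afterwards pulling everything back through $\Phi_j$ together with the velocity transformation $\bar{\bm{u}}^{(j)}\mapsto\bm{u}^{(j)}=J_j\mathcal{J}_j^{-1}(\bar{\bm{u}}^{(j)}\circ\Phi_j)$. Concretely, I would write $\bar{\bm{u}}^{(j)}=\bm{U}^{(j)}[\bm{\tau}]+\bar{\bm{w}}^{(j)}$ on $\Omega_j$ and let $\bm{v}^{(j)}$ be the image of $\bar{\bm{w}}^{(j)}$ on $\Omega_j^0$ under the same rule. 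The organising remark is that at $\bm{\eta}=0$ one has $\Phi_j=\mathrm{id}$, $\mathcal{J}_j=I$ and $J_j=1$, so every transformed differential operator and boundary trace coincides with its untransformed counterpart; hence all the $\bm{\eta}$-dependent corrections produced by the transformation carry a factor that vanishes at $\bm{\eta}=0$. Subtracting the laminar flow \emph{before} flattening — rather than subtracting its pullback afterwards — is exactly what will make these corrections, and thus $\bm{N}_j$ and $R_j$, of second order at the trivial solution.

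I would then transform the equations one at a time. Because the chosen velocity transformation is the flux-preserving (contravariant Piola) one, $\nabla\cdot\bar{\bm{u}}^{(j)}=0$ is equivalent to $\nabla\cdot\bm{v}^{(j)}=0$ on $\Omega_j^0$ — using that $\bm{U}^{(j)}[\bm{\tau}]$ is itself divergence free — which is \eqref{eq:div_flat}. The Beltrami identity $\nabla\times\bar{\bm{w}}^{(j)}=\alpha_j\bar{\bm{w}}^{(j)}$, valid because both $\bar{\bm{u}}^{(j)}$ and $\bm{U}^{(j)}[\bm{\tau}]$ satisfy \eqref{eq:curl}, transforms into $\nabla\times\!\big(G_j[\bm{\eta}]\,\bm{v}^{(j)}\big)=\alpha_j\bm{v}^{(j)}$ with $G_j=J_j^{-1}\mathcal{J}_j^{\mathsf{T}}\mathcal{J}_j$ the matrix measuring the gap between the covariant and contravariant pullbacks, $G_j[0]=I$; rewriting this as $\nabla\times\bm{v}^{(j)}-\alpha_j\bm{v}^{(j)}=\nabla\times\!\big((I-G_j[\bm{\eta}])\bm{v}^{(j)}\big)$ yields \eqref{eq:curl_flat} with $\bm{N}_j[\bm{v},\bm{\eta},\bm{\tau}]=(I-G_j[\bm{\eta}])\bm{v}^{(j)}$ — visibly a product of a factor vanishing at $\bm{\eta}=0$ with $\bm{v}^{(j)}$. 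The kinematic conditions $\bm{n}\cdot\bar{\bm{u}}^{(j)}=0$, after inserting $\bar{\bm{u}}^{(j)}=\bm{U}^{(j)}[\bm{\tau}]+\bar{\bm{w}}^{(j)}$, pulling back, multiplying through by $J_j$, and using $\partial_x\varphi_j=\partial_x\eta_j$, $\partial_y\varphi_j=\partial_y\eta_j$ on $z=d_j$ (and the analogues with $\eta_{j-1}$ on $z=d_{j-1}$), collapse to \eqref{eq:bdry_bottom_flat}–\eqref{eq:bdry_top_flat}. The integral conditions become \eqref{eq:int_flat}: the first two rows of $\mathcal{J}_j$ being $(1,0,0)$, $(0,1,0)$ forces $\int_{\hat{\Omega}_j}\bar{w}^{(j)}_i\,d\bar{\bm{x}}=\int_{\hat{\Omega}^0_j}v^{(j)}_i\,d\bm{x}$ for $i=1,2$, while \eqref{eq:int_cond} says precisely that $\int_{\hat{\Omega}_j}\bar{w}^{(j)}_i\,d\bar{\bm{x}}=0$. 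Finally, inserting \eqref{eq:pressure} into \eqref{eq:dyn_bdry}, expanding $|\bar{\bm{u}}^{(j)}|^2=|\bm{U}^{(j)}[\bm{\tau}]|^2+2\,\bm{U}^{(j)}[\bm{\tau}]\cdot\bar{\bm{w}}^{(j)}+|\bar{\bm{w}}^{(j)}|^2$, using $|\bm{U}^{(j)}[\bm{\tau}]|^2\equiv r^2$ and the normalisation of the $Q_j$ from \cref{sec:triv_sol} (so the $r^2$-constants cancel against the right-hand side of the $\bm{\tau}$-form of \eqref{eq:dyn_bdry}), pulling back, and splitting the mean-curvature operator as $-\sigma_j\Delta\eta_j$ plus a higher-order remainder, one is left with exactly $B_j[\bm{v},\bm{\eta},\bm{\tau}]+R_j[\bm{v},\bm{\eta},\bm{\tau}]=0$ on $z=d_j$, with $B_j$ the explicit low-order part and $R_j$ the remainder.

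It then remains to record the mapping and vanishing properties. Since $\Phi_j$ and $\varphi_j$ depend smoothly (indeed polynomially, up to $1/J_j$) on $\bm{\eta}\in\mathfrak{Y}$, are $\Lambda$-periodic, and commute with $\bm{x}'\mapsto-\bm{x}'$, and since the laminar profiles lie in the right symmetry classes, $\bm{N}_j$ and $R_j$ define smooth maps from a neighbourhood in $\mathfrak{X}^0\times\mathfrak{Y}\times\mathfrak{Z}$ into $\mathfrak{X}_j$ and $\mathfrak{W}_j$ respectively, with the stated evenness and oddness; one checks in particular that $(I-G_j)\bm{v}^{(j)}$ stays in the correct even/odd classes. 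The equalities $\bm{N}_j[0,0,\bm{\tau}]=R_j[0,0,\bm{\tau}]=0$ and the vanishing of their $\bm{v}$- and $\bm{\eta}$-derivatives at the trivial solution then follow by inspection of the explicit expressions: every surviving term is a product of at least two factors each of which vanishes at $(\bm{v},\bm{\eta})=(0,0)$ (a factor $I-G_j[\bm{\eta}]$, or $\nabla\eta$, or $\bm{v}^{(j)}$, or a difference of laminar traces, etc.), which is also consistent with the fact that $(\bm{v},\bm{\eta})=(0,0)$ is the trivial solution, solving the full physical system for every $\bm{\tau}$. For the converse direction one observes that whenever $\bm{\eta}\in\mathfrak{Y}$ is small enough that the interfaces remain ordered, $\Phi_j$ is a diffeomorphism of $\Omega_j^0$ onto $\Omega_j$, so each step above can be run backwards. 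The hypothesis $\alpha_j(d_j-d_{j-1})\notin2\pi\mathbb{Z}\setminus\{0\}$ is not needed for the change of variables as such; it excludes the degenerate case in which the laminar profile averages to zero over the $j$-th layer, and is imposed here because it is the non-degeneracy that will later make the problem for $\bm{v}^{(j)}$ governed by \eqref{eq:curl_flat}–\eqref{eq:int_flat} uniquely solvable.

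The step I expect to be the main obstacle is the bookkeeping in the previous paragraph: one must verify that the low-order part has been extracted \emph{completely}, i.e. that nothing with nonvanishing first derivative at the trivial solution is still hidden inside $\bm{N}_j$ or $R_j$. This forces a careful computation of the $\bm{\eta}$-derivatives of all transformed operators at $\bm{\eta}=0$, using the structure of the laminar flow — $\nabla\times\bm{U}^{(j)}[\bm{\tau}]=\alpha_j\bm{U}^{(j)}[\bm{\tau}]$, $\partial_zU^{(j)}_1=\alpha_jU^{(j)}_2$, $\partial_zU^{(j)}_2=-\alpha_jU^{(j)}_1$, $|\bm{U}^{(j)}[\bm{\tau}]|^2\equiv r^2$ — and the precise choice of the $Q_j$, so that the Jacobian contributions that look linear in $\bm{\eta}$ either cancel against the right-hand side of \eqref{eq:dyn_bdry} or turn out to be multiplied by a further vanishing factor. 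Getting the order of operations right — subtracting the laminar flow in the physical frame first and only then flattening — is the conceptual point that makes this come out cleanly.
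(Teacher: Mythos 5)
Your proposal is essentially correct but organises the computation differently from the paper. The paper first flattens the \emph{full} velocity field, observes that the resulting right-hand sides fail the required vanishing properties, and then repairs this by subtracting, in the flat frame, an explicit corrector $\tilde{\bm{u}}^{(j)}$ (built from $\bm{U}^{(j)}(z)$ and the derivatives of $\varphi_j$) together with a modified laminar flow $\tilde{\bm{U}}^{(j)}[\bm{\tau},\bm{\eta}]$ chosen so that the integral condition becomes homogeneous; it is precisely the solvability of the $2\times 2$ system determining $\tilde{\bm{U}}^{(j)}$ that uses the hypothesis $\alpha_j(d_j-d_{j-1})\notin 2\pi\mathbb{Z}\setminus\{0\}$ (the relevant determinant is proportional to $2-2\cos(\alpha_j(d_j-d_{j-1}))$). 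Your route --- subtract $\bm{U}^{(j)}$ in the physical frame and Piola-transform only the perturbation --- buys two genuine simplifications: the curl remainder is immediately $\bm{N}_j=(I-G_j[\bm{\eta}])\bm{v}^{(j)}$, visibly quadratic at the trivial solution, and the integral condition \eqref{eq:int_flat} follows exactly by change of variables with no corrector and no use of the hypothesis. Your reading of the hypothesis (it excludes the case where the laminar profile averages to zero over the layer, and is really needed for the later unique solvability) is consistent with this.

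The one concrete mismatch is in the kinematic conditions. With your decomposition, the trace computation gives
\begin{equation*}
v_3^{(j)}\big|_{z=d_j}=U_1^{(j)}(\eta_j+d_j)\,\partial_x\eta_j+U_2^{(j)}(\eta_j+d_j)\,\partial_y\eta_j,
\end{equation*}
i.e.\ the laminar profile is evaluated at the \emph{physical} interface height, whereas \eqref{eq:bdry_top_flat} (and its use in the linearised problem, \eqref{eq:bdry_above_lin}) has $U_i^{(j)}(d_j)$. The difference $\bigl(U_i^{(j)}(\eta_j+d_j)-U_i^{(j)}(d_j)\bigr)\partial_x\eta_j$ is quadratic, but the boundary conditions \eqref{eq:bdry_bottom_flat}--\eqref{eq:bdry_top_flat} as stated carry no remainder operator into which it can be absorbed, so your construction does not literally produce the asserted system. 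The paper avoids this because its corrector $\tilde{\bm{u}}^{(j)}$ in \eqref{eq:tildeu_def} is built from $\bm{U}^{(j)}$ evaluated at the flat coordinate $z$, so its third trace is exactly $-U_1^{(j)}(d_j)\partial_x\eta_j-U_2^{(j)}(d_j)\partial_y\eta_j$. You would either have to adjust your decomposition accordingly near the boundaries, or restate the proposition with a quadratic remainder in the kinematic conditions (which would be harmless for the subsequent reduction but is not what is claimed). Apart from this, your verification of the divergence, dynamic and integral conditions, and of the vanishing of $\bm{N}_j$, $R_j$ and their first derivatives at the trivial solution, matches the paper's conclusions.
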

		\begin{proof}
		By directly applying the flatting transformation to \crefrange{eq:curl}{eq:int_cond} turns the equations into
		\begin{align}
			\nabla\times \bm{u}^{(j)}-\alpha_j\bm{u}^{(j)}&=\nabla\times \bm{\mathcal{M}}_j[\bm{u},\bm{\eta}]&&\inn \Omega_j^0,\label{eq:curlnaiveflat}\\
			\nabla \cdot\bm{u}^{(j)}&=0&&\inn \Omega_j^0,\\
			u_3^{(j)}&=0&&\onn \partial\Omega_j^0,\\
			\int_{\hat{\Omega}_j^0}u_i^{(j)}d\bm{x}&=\int_{\hat{\Omega}_j}U_i^{(j)}[\bm{\tau}]d\bm{x}&& i=1,2\label{eq:intnaiveflat}\\
			\intertext{for $j=1,\ldots ,m$,}
			\mathcal{B}_j[\bm{u},\bm{\eta}]&=(\rho_j-\rho_{j+1})\frac{r^2}{2} &&\onn z=d_{j},\label{eq:dynbdrynaiveflat}
		\end{align}
		for $j=1,\ldots ,n$.
		where
		\[
			\bm{\mathcal{M}}_j[\bm{u},\bm{\eta}]=\bm{u}^{(j)}-\frac{1}{J_j}\mathcal{J}_j^T\mathcal{J}_j \bm{u}^{(j)},
		\]
		and
		\begin{align*}
			\mathcal{B}_j[\bm{u},\bm{\eta}]&=\rho_j\left(\frac{\bm{u}^{(j)}\cdot \mathcal{J}_j^T\mathcal{J}_j \bm{u}^{(j)}}{2 J_j^2}+g\eta_{j}\right)-\rho_{j+1}\left(\frac{\bm{u}^{(j+1)}\cdot \mathcal{J}_{j+1}^T\mathcal{J}_{j+1} \bm{u}^{(j+1)}}{2 J_{j+1}^2}+g\eta_{j}\right)\\
			&\qquad-\sigma_{j}{\nabla} \cdot \left(\frac{{\nabla} \eta_{j}}{\sqrt{1+\vert {\nabla} \eta_{j}\vert^2}}\right)
		\end{align*}
		These equations are very similar to \crefrange{eq:curl_flat}{eq:dyn_flat}, but there are two vital differences. The first is that $ D_\eta \bm{\mathcal{M}}_j[0,0](\bm{\eta})\neq 0 $ and the second is that the right hand side of \cref{eq:intnaiveflat} is nonzero. To rectify this we introduce $ \tilde{\bm{u}}^{(j)} $ and $ \tilde{\bm{U}}^{(j)} $, defined below, which will give equations with the desired properties for 
		\begin{equation}\label{eq:vdef} \bm{v}^{(j)}\coloneqq\bm{u}^{(j)}-\tilde{\bm{u}}^{(j)}-\tilde{\bm{U}}^{(j)}.
		\end{equation}
		The velocity field $ \tilde{\bm{u}}^{(j)} $ gives the right hand side of \cref{eq:curl_flat} the properties we want, but introduces the nonzero right hand sides of the boundary conditions in \crefrange{eq:bdry_bottom_flat}{eq:bdry_top_flat}, while $ \tilde{\bm{U}}^{(j)} $ makes the right hand side of \cref{eq:int_flat} zero. Specifically, $ \tilde{\bm{u}}^{(j)} $ is given by
		\begin{equation}\label{eq:tildeu_def}
		\tilde{\bm{u}}^{(j)}=
		\begin{pmatrix}
		(J_j-1)U_1^{(j)}+\alpha_j(\varphi_j-z)U_2^{(j)}\\
		(J_j-1)U_2^{(j)}-\alpha_j(\varphi_j-z)U_1^{(j)}\\
		-\partial_x\varphi_j U_1^{(j)}-\partial_y\varphi_j U_2^{(j)}
		\end{pmatrix},
		\end{equation}
		and $\tilde{\bm{U}}^{(j)}=\tilde{\bm{U}}^{(j)}[\bm{\tau},\bm{\eta}]$ is a laminar flow such that $\tilde{\bm{U}}^{(j)}[\bm{\tau},0]=\bm{U}^{(j)}[\bm{\tau}]$, that is, for flat interfaces it coincides with the solution from \Cref{sec:triv_sol}.
		Moreover, $\tilde{\bm{U}}^{(j)}[\bm{\tau},\bm{\eta}]$ is chosen in such a way that
		\[ 
		\int_{\hat{\Omega}_j^0}v_i^{(j)}d\bm{x}=0,\qquad i=1,2.
		\]
		To show that this choice is possible we begin by computing
		\begin{align*} 
			\int_{d_{j-1}}^{d_j} \tilde{u}_1^{(j)} dz&=\int_{d_{j-1}}^{d_j}\left(\alpha_j(\varphi_j-z)U_2^{(j)}+(J_j-1)U_1^{(j)}\right)dz\\
			&=\eta _{j}U_{1}\left( d_{j}\right) -\eta _{j-1}U_{1}\left( d_{j-1}\right) -\int_{d_{j-1}}^{d_{j}}(J_j-1)U_{1}\left( z\right) dz+\int_{d_{j-1}}^{d_{j}}(J_j-1)U_{1}\left( z\right) dz\\
			&=\eta _{j}U_{1}\left( d_{j}\right) -\eta _{j-1}U_{1}\left( d_{j-1}\right) 
		\end{align*}
		and similarly
		\[ 
			\int_{d_{j-1}}^{d_j} \tilde{u}_2^{(j)} dz=\eta _{j}U_{2}\left( d_{j}\right) -\eta _{j-1}U_{2}\left( d_{j-1}\right).
		\]
		Since
		\[ 
			\int_{\hat{\Omega}_j^0}v_i^{(j)}d\bm{x}=\underbrace{\int_{\hat{\Omega}_j} U_i^{(j)} d\bm{x}-\int_{\hat{\Omega}_j^0}\tilde{u}^{(j)}_i d\bm{x}-\int_{\hat{\Omega}_j^0}U^{(j)}_i d\bm{x}}_{\mathcal{I}_i^{(j)}[\bm{\tau},\bm{\eta}]}
			-\int_{\hat{\Omega}_j^0}(\tilde{U}_i-U^{(j)}_i) d\bm{x},\qquad i=1,2,
		\]
		we find
		\begin{align*}
			\mathcal{I}_1^{(j)}[\bm{\tau},\bm{\eta}]
			&=\int_{\Gamma} \left(\int_{d_{j-1}+\eta_{j-1}}^{d_j+\eta_j}U^{(j)}_1 dz-\int_{d_{j-1}}^{d_j} (\tilde{u}^{(j)}_1+U^{(j)}_1 )dz \right)d\bm{x}'\\
			&=-\int_{\Gamma} \left(\frac{U_2^{(j)}(d_j+\eta_j)-U_2^{(j)}(d_{j-1}+\eta_{j-1})}{\alpha_{j}}\right.\\
			&\qquad\qquad-\left.\frac{U_2^{(j)}(d_j)-U_2^{(j)}(d_{j-1})}{\alpha_{j}} +(\eta _{j}U_{1}^{(j)}\left( d_{j}\right) -\eta _{j-1}U_{1}^{(j)}\left( d_{j-1}\right) )\right)d\bm{x}'\\
			&=-\frac{1}{\alpha_{j}}\int_{\Gamma} \left((U_2^{(j)}(d_j+\eta_j)-U_2^{(j)}(d_j)-\eta_j\partial_z U_2^{(j)}(d_j))\right.\\
			&\qquad\qquad-\left.(U_2^{(j)}(d_{j-1}+\eta_{j-1})-U_2^{(j)}(d_{j-1})-\eta_{j-1}\partial_zU_2^{(j)}(d_{j-1}))\right) d\bm{x}'\\
			&=\frac{1}{\alpha_{j}}\int_{\Gamma} \sum_{n=2}^\infty \left(\eta_{j-1}^n\frac{\partial_z^n U_2^{(j)}(d_{j-1})}{n!}-\eta_{j}^n\frac{\partial_z^n U_2^{(j)}(d_{j})}{n!}\right)d\bm{x}'
		\end{align*}
		and similarly
		\[ 
			\mathcal{I}_2^{(j)}[\bm{\tau},\bm{\eta}]=\frac{1}{\alpha_{j}}\int_{\Gamma} \sum_{n=2}^\infty \left(\eta_{j}^n\frac{\partial_z^n U_1^{(j)}(d_{j})}{n!}-\eta_{j-1}^n\frac{\partial_z^n U_1^{(j)}(d_{j-1})}{n!}\right)d\bm{x}'.
		\]
		By setting $ \tilde{\bm{U}}^{(j)}[\bm{\tau},\bm{\eta}]-\bm{U}[\bm{\tau},0]=c_1^{(j)}[\bm{\tau},\bm{\eta}] \bm{V}_1+c_2^{(j)}[\bm{\tau},\bm{\eta}]\bm{V}_2 $ and integrating we obtain that we need $c_1^{(j)}[\bm{\tau},\bm{\eta}]$ and $c_2^{(j)}[\bm{\tau},\bm{\eta}]$ to satisfy
		\[ 
			\frac{\vert \Gamma\vert}{\alpha_{j}}\begin{pmatrix}
				\sin(\alpha_jd_j)-\sin(\alpha_jd_{j-1}) & -\cos(\alpha_jd_j)+\cos(\alpha_jd_{j-1})\\
				\cos(\alpha_jd_j)-\cos(\alpha_jd_{j-1})& \sin(\alpha_jd_j)-\sin(\alpha_jd_{j-1})
			\end{pmatrix}
			\begin{pmatrix}
			c_1^{(j)}[\bm{\tau},\bm{\eta}]\\
			c_2^{(j)}[\bm{\tau},\bm{\eta}]
			\end{pmatrix}=
			\begin{pmatrix}
			\mathcal{I}_1[\bm{\tau},\bm{\eta}]\\
			\mathcal{I}_2[\bm{\tau},\bm{\eta}]
			\end{pmatrix}.
		\]
		If $ \alpha_j(d_j-d_{j-1})\notin 2\pi\mathbb{Z} $ these equations are solvable and we get
		\[ 
			\begin{pmatrix}
			c_1^{(j)}[\bm{\tau},\bm{\eta}]\\
			c_2^{(j)}[\bm{\tau},\bm{\eta}]
			\end{pmatrix}=\frac{\alpha_j}{\vert \Gamma\vert }\begin{pmatrix}
			\frac{\sin(\alpha_jd_j)-\sin(\alpha_jd_{j-1}) }{2-2\cos(\alpha_j(d_j-d_{j-1}))}& \frac{\cos(\alpha_jd_j)-\cos(\alpha_jd_{j-1}) }{2-2\cos(\alpha_j(d_j-d_{j-1}))}\\
			\frac{-\cos(\alpha_jd_j)+\cos(\alpha_jd_{j-1}) }{2-2\cos(\alpha_j(d_j-d_{j-1}))}& \frac{\sin(\alpha_jd_j)-\sin(\alpha_jd_{j-1}) }{2-2\cos(\alpha_j(d_j-d_{j-1}))}
			\end{pmatrix}
			\begin{pmatrix}
			\mathcal{I}_1[\bm{\tau},\bm{\eta}]\\
			\mathcal{I}_2[\bm{\tau},\bm{\eta}]
			\end{pmatrix}
		\]
		We note that $\tilde{\bm{U}}^{(j)}[\bm{\tau},\bm{\eta}]-\bm{U}[\bm{\tau},0]$ is of quadratic order with respect to $\bm{\eta}$. Moreover, for $ \alpha_j=0 $ obvious modifications to the calculations give $ c_1^{(j)}=c_2^{(j)}=0 $, which coincide with the limit $ \alpha_j\to 0 $ in the formula above. In fact, setting $ c_1^{(j)}[\bm{\tau},\bm{\eta}]=c_2^{(j)}[\bm{\tau},\bm{\eta}]=0 $ for $ \alpha_j=0 $ gives an analytic function (with respect to $ \alpha_j $) around $ 0 $. Thus we can find the desired $ \tilde{\bm{U}}^{(j)} $ for $ \alpha_j(d_j-d_{j-1})\notin 2\pi\mathbb{Z}\setminus\{0\} $.
		
		We move on to check what happens with the other equations.
		First note that
		\begin{align}
		\nabla\times \tilde{\bm{u}}^{(j)}-\alpha_j\tilde{\bm{u}}^{(j)}&=\nabla\times \begin{pmatrix}
		(J_j-1)\bm{U}_1^{(j)}\\
		(J_j-1)\bm{U}_2^{(j)}\\
		-\partial_x\varphi_j\bm{U}_1^{(j)} -\partial_y\varphi_j\bm{U}_2^{(j)}
		\end{pmatrix}&& \inn \Omega^0_j,\label{eq:tildeu_prop1}\\
		\nabla\cdot \bm{\tilde{u}}^{(j)}&=0&& \inn \Omega^0_j,\label{eq:tildeu_prop2}\\
		v_3^{(j)}&=U_1^{(j)}\partial_x\eta_{j-1}+U_2^{(j)}\partial_y\eta_{j-1} &&\onn z=d_{j-1},\label{eq:tildeu_prop3}\\
		v_3^{(j)}&=U_1^{(j)}\partial_x\eta_{j}+U_2^{(j)}\partial_y\eta_{j}&&\onn z=d_j.\label{eq:tildeu_prop4}
		\end{align}
		Then we define $\widetilde{\bm{\mathcal{M}}}_j[\bm{v},\bm{\eta},\bm{\tau}]=\bm{\mathcal{M}}_j[\bm{v}+\tilde{\bm{u}}[\bm{\eta},\bm{\tau}]+\tilde{\bm{U}}[\bm{\eta},\bm{\tau}],\bm{\eta}] $ and $\tilde{\mathcal{B}}_j[\bm{v},\bm{\eta},\bm{\tau}]=\mathcal{B}_j[\bm{v}+\tilde{\bm{u}}[\bm{\eta},\bm{\tau}]+\tilde{\bm{U}}[\bm{\eta},\bm{\tau}],\bm{\eta}] $ and find that
		\begin{align*} 
			\bm{M}_j[\bm{v},\bm{\eta},\bm{\tau}]\coloneqq& D_{\bm{v}}\widetilde{\bm{\mathcal{M}}}_j[0,0,\bm{\tau}](\bm{v})+D_{\bm{\eta}}\widetilde{\bm{\mathcal{M}}}_j[0,0,\bm{\tau}](\bm{\eta})=
			\begin{pmatrix}
				(J_j-1)\bm{U}_1^{(j)}\\
				(J_j-1)\bm{U}_2^{(j)}\\
				-\partial_x\varphi_j\bm{U}_1^{(j)} -\partial_y\varphi_j\bm{U}_2^{(j)}
			\end{pmatrix},
		\end{align*}
		and
		\begin{align*}
			B_j[\bm{v},\bm{\eta},\bm{\tau}]\coloneqq D_{\bm{v}}\tilde{\mathcal{B}}_j[0,0,\bm{\tau}](\bm{v})+D_{\bm{\eta}}\tilde{\mathcal{B}}_j[0,0,\bm{\tau}](\bm{\eta})&=\rho_j(U_1^{(j)} v^{(j)}_1+U_2^{(j)} v^{(j)}_2)\\
			&\qquad-\rho_{j+1}(U_1^{(j+1)}v_1^{(j+1)}+U_2^{(j+1)}v_2^{(j+1)})\\
			&\qquad+(\rho_j-\rho_{j+1})g\eta_{j}-\sigma_{j}\Delta \eta_{j}.
		\end{align*}
		Combining these two equations above with \crefrange{eq:vdef}{eq:tildeu_prop4} we see that \crefrange{eq:curlnaiveflat}{eq:dynbdrynaiveflat} turn into \crefrange{eq:curl_flat}{eq:dyn_flat} with $ \bm{v}$ as a variable instead of $ \bm{u} $, where, $\bm{N}_j[\bm{v},\bm{\eta},\bm{\tau}]=\widetilde{\bm{\mathcal{M}}}_j[\bm{v},\bm{\eta},\bm{\tau}]-\bm{M}_j[\bm{v},\bm{\eta},\bm{\tau}]$ and $R_j[\bm{v},\bm{\eta},\bm{\tau}]=\tilde{\mathcal{B}}_j[\bm{v},\bm{\eta},\bm{\tau}]-B_j[\bm{v},\bm{\eta},\bm{\tau}]$. From these definitions it also directly follows that \begin{align*}
		\bm{N}_j[0,0,\bm{\tau}]&=D_{\bm{v}}\bm{N}_j[0,0,\bm{\tau}](\bm{v})=D_{\bm{\eta}}\bm{N}_j[0,0,\bm{\tau}](\bm{\eta})=0\\ R_j[0,0,\bm{\tau}]&=D_{\bm{v}}R_j[0,0,\bm{\tau}](\bm{v})=D_{\bm{\eta}}R_j[0,0,\bm{\tau}](\bm{\eta})=0.
		\end{align*}
		\end{proof}	
		
	In light of this proposition we focus our attention to \crefrange{eq:curl_flat}{eq:dyn_flat} in the remainder of the paper.
	\subsection{Reduction to the interfaces}
	We define the spaces
	\begin{align*} 
		\mathbb{X}_j&=\left\{\bm{u}\in \mathfrak{X}_j:\nabla\cdot\bm{u}=0,\int_{\hat{\Omega}^0_j}u_id\bm{x}=0,\; i=1,2\right\}\\
		\mathbb{Y}_j&=\left\{ (\bm{v},f,g)\in ((C^{0,\delta}_{per,e}(\overline{\Omega^0_j}))^2\times C^{0,\delta}_{per,o}(\overline{\Omega^0_j}))\times C^{1,\delta}_{per,o}(\mathbb{R}^2)\times C^{1,\delta}_{per,o}(\mathbb{R}^2):\nabla \cdot \bm{v}=0\right\}
	\end{align*}
	and the operator $ \mathcal{C}_{\alpha_j,j}:\mathbb{X}_j\to\mathbb{Y}_j $ by
	\[ 
		\mathcal{C}_{\alpha_j,j}[\bm{u}]=(\nabla\times \bm{u}-\alpha_j{\bm{u}},u_3\vert_{z=d_j},u_3\vert_{z=d_{j-1}}).
	\]
	For a given $ j=1,\cdots,m $ the \crefrange{eq:curl_flat}{eq:int_flat} are equivalent to
	\[ 
		\mathcal{C}_{\alpha_j,j}[\bm{v}]=(\nabla\times \bm{N}_j[\bm{v},\bm{\eta},\bm{\tau}],U_1^{(j)}\partial_x\eta_{j}+U_2^{(j)}\partial_y\eta_{j},U_1^{(j)}\partial_x\eta_{j-1}+U_2^{(j)}\partial_y\eta_{j-1}).
	\]
		To find a unique $ \bm{v} $ solving \crefrange{eq:curl_flat}{eq:int_flat} if $ \bm{\eta} $ and $ \bm{\tau} $ are given we need to impose the non-resonance condition
		\begin{equation}\label{eq:no-resonance}
			\sqrt{\alpha_j^2-\vert\bm{k}^2\vert}\notin 	\frac{\pi\mathbb{Z}_+}{d_j-d_{j-1}}\text{ for all }\bm{k}\in\Lambda'\text{ such that }\vert \bm{k}\vert<\vert \alpha_j\vert\text{ and all }j=1,\ldots,m.
		\end{equation}
		Doing so allows us to substitute this solution, $ \bm{v}[\bm{\eta},\bm{\tau}] $, into \cref{eq:dyn_flat}, which reduces the problem to the interfaces. We also note that \cref{eq:no-resonance} is a stricter condition than the assumption in \Cref{prop:flat_prob}; $ \bm{k}=0 $ for $ \alpha_j\neq 0 $ in \cref{eq:no-resonance} implies that the assumption in \Cref{prop:flat_prob} is satisfied. To solve \crefrange{eq:curl_flat}{eq:int_flat} we begin by proving that $\mathcal{C}_{\alpha_j,j}$ has the following properties.
		\begin{lemma}\label{lemma:iso}\leavevmode
			\begin{itemize}
				\item[(i)] $ \mathcal{C}_{0,j}:\mathbb{X}_j\to\mathbb{Y}_j $ is an isomorphism.
				\item[(ii)]$ \mathcal{C}_{\alpha_j,j}:\mathbb{X}_j\to\mathbb{Y}_j $ is a Fredholm operator with index 0.
				\item[(iii)] $ \mathcal{C}_{\alpha_j,j}:\mathbb{X}_j\to\mathbb{Y}_j $ is an isomorphism if and only if the non-resonance condition in \cref{eq:no-resonance} is satisfied.
			\end{itemize}
		\end{lemma}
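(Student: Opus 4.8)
The plan is to analyze $\mathcal{C}_{\alpha_j,j}$ via Fourier series in $\bm{x}'$, reducing it to a family of ODE boundary value problems in $z$ indexed by $\bm{k}\in\Lambda'$, and then to combine these fibrewise computations using elliptic/Fredholm theory in the Hölder scale. For (i), I would first observe that the divergence-free constraint together with the two boundary traces $u_3|_{z=d_j}$ and $u_3|_{z=d_{j-1}}$ are compatible: integrating $\nabla\cdot\bm u=0$ over $\hat\Omega_j^0$ and using periodicity in $\bm x'$ shows $\int_\Gamma u_3|_{z=d_j}\,d\bm x' = \int_\Gamma u_3|_{z=d_{j-1}}\,d\bm x'$, and in $\mathbb{Y}_j$ both traces are odd in $\bm x'$ so both integrals vanish — so no extra solvability condition is hidden. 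Then, given $(\bm f, g_+, g_-)\in\mathbb{Y}_j$ with $\nabla\cdot\bm f=0$, I would solve $\nabla\times\bm u=\bm f$, $\nabla\cdot\bm u=0$ with the prescribed normal traces. On each Fourier mode $\bm k\ne 0$ this is a constant-coefficient linear ODE system in $z$ on the interval $(d_{j-1},d_j)$ with two boundary conditions, uniquely solvable (the homogeneous problem $\nabla\times\bm u=0$, $\nabla\cdot\bm u=0$, $u_3=0$ on both caps forces $\bm u=0$ because a curl-free divergence-free field is harmonic and the scalar potential then solves a Laplace problem with zero Neumann-type data after accounting for the mean-zero constraint); on the mode $\bm k=0$ the mean-zero conditions $\int u_i=0$, $i=1,2$, kill the otherwise-present constants. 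This gives a bounded inverse on each mode with norm uniformly bounded in $\bm k$ (indeed decaying), and standard Hölder elliptic estimates for the div-curl system upgrade this to boundedness $\mathbb{Y}_j\to\mathbb{X}_j$; hence (i).

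For (ii), I would write $\mathcal{C}_{\alpha_j,j}=\mathcal{C}_{0,j}-\alpha_j \iota$, where $\iota:\mathbb{X}_j\to\mathbb{Y}_j$ sends $\bm u\mapsto(\bm u,0,0)$. Since $\mathbb{X}_j$ embeds into $(C^{1,\delta})^3$ while the first component of $\mathbb{Y}_j$ only sees $C^{0,\delta}$ regularity, the inclusion $C^{1,\delta}_{per}(\overline{\Omega_j^0})\hookrightarrow C^{0,\delta}_{per}(\overline{\Omega_j^0})$ is compact (Arzelà–Ascoli), so $\iota$ is a compact operator. By part (i), $\mathcal{C}_{0,j}$ is invertible, so $\mathcal{C}_{\alpha_j,j}=\mathcal{C}_{0,j}(\mathrm{Id}-\alpha_j\mathcal{C}_{0,j}^{-1}\iota)$ is a compact perturbation of an isomorphism, hence Fredholm of index $0$.

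For (iii), given index $0$, the operator is an isomorphism iff its kernel is trivial. An element of the kernel is $\bm u\in\mathbb{X}_j$ with $\nabla\times\bm u=\alpha_j\bm u$, $u_3=0$ on $z=d_{j-1},d_j$, and mean-zero horizontal components. Passing to Fourier modes $e^{i\bm k\cdot\bm x'}$: on the mode $\bm k=0$ one checks directly (using $\alpha_j(d_j-d_{j-1})\notin2\pi\mathbb{Z}\setminus\{0\}$, which (2.20) implies) that the only such laminar Beltrami field with zero horizontal mean is $\bm u\equiv 0$; on modes $\bm k\ne 0$ the equation $\nabla\times\bm u=\alpha_j\bm u$ forces each component to satisfy $-\Delta\bm u=\alpha_j^2\bm u$, so the $z$-dependence is through $e^{\pm i\sqrt{\alpha_j^2-|\bm k|^2}\,z}$ (or real exponentials if $|\bm k|>|\alpha_j|$), and imposing $u_3=0$ at $z=d_{j-1}$ and $z=d_j$ together with the Beltrami relations yields a nontrivial solution precisely when $\sin\!\big(\sqrt{\alpha_j^2-|\bm k|^2}\,(d_j-d_{j-1})\big)=0$ with $|\bm k|<|\alpha_j|$, i.e. exactly the failure of (2.21); when $|\bm k|>|\alpha_j|$ the hyperbolic functions have no such zero and the kernel mode is trivial. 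Thus the kernel is nontrivial iff the non-resonance condition (2.21) fails, which by Fredholmness of index $0$ is equivalent to $\mathcal{C}_{\alpha_j,j}$ failing to be an isomorphism.

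The main obstacle I expect is the careful bookkeeping in part (iii) of the mode-by-mode computation: one must correctly derive the full solution space of $\nabla\times\bm u=\alpha_j\bm u$ on a slab with the normal component prescribed on both faces — in particular handling the coupling between the three components imposed by the Beltrami relation and the divergence-free condition (which is automatic once $\nabla\times\bm u=\alpha_j\bm u$ with $\alpha_j\ne0$), tracking which symmetry class ($C^{1,\delta}_{per,e}$ vs. $C^{1,\delta}_{per,o}$) each component lives in, and isolating precisely the resonant frequencies. The $\bm k=0$ mode and the case $\alpha_j=0$ also need separate (easy) treatment, and one must confirm the mean-zero integral conditions are exactly what is needed to make the non-resonant modes uniquely solvable rather than merely Fredholm.
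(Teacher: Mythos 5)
Your proposal is correct, and for parts (ii) and (iii) it follows essentially the same route as the paper: Fredholmness via writing $\mathcal{C}_{\alpha_j,j}$ as $\mathcal{C}_{0,j}$ plus a compact perturbation (using the compact embedding $C^{1,\delta}\hookrightarrow C^{0,\delta}$), and then the isomorphism criterion via a mode-by-mode analysis of the kernel, with the resonant frequencies appearing exactly as the zeros of $\sin\bigl(\sqrt{\alpha_j^2-\vert\bm{k}\vert^2}(d_j-d_{j-1})\bigr)$ for $\vert\bm{k}\vert<\vert\alpha_j\vert$ and the $\bm{k}=0$ mode handled by the integral conditions. Your observations that $\nabla\cdot\bm{u}=0$ is automatic from the Beltrami equation when $\alpha_j\neq 0$, and that the two boundary traces carry no hidden compatibility condition, are correct (the latter is not needed in either argument but is a sensible sanity check).

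The one place where you genuinely diverge from the paper is the surjectivity half of part (i). The paper does not synthesize the solution Fourier mode by Fourier mode; instead it constructs a right inverse in one stroke by solving two classical elliptic problems, namely a vector Poisson problem $\Delta\bm{A}=\bm{v}$ with mixed Dirichlet/Neumann conditions on the two flat caps and a harmonic scalar $\phi$ with Neumann data $f,g$, and then checks that $\bm{u}=-\nabla\times\bm{A}+\nabla\phi$ hits $(\bm{v},f,g)$. That route lets one quote Schauder theory for the Laplacian directly and avoids the step in your argument that is thinnest: uniform (even decaying) bounds on the mode-wise solution operators do not by themselves give boundedness on $C^{0,\delta}\to C^{1,\delta}$, since Fourier multiplier bounds on Hölder spaces require more than pointwise control of the symbols, and trigonometric polynomials are not dense in $C^{0,\delta}$. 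Your fallback — invoking Schauder estimates for the div-curl system with prescribed normal trace as an ADN-elliptic system — does close this gap, but if you use that, the mode-by-mode computation is really only needed for uniqueness, and you may as well argue existence as the paper does. Everything else in the proposal stands.
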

		\begin{proof} Clearly $ \mathcal{C}_{\alpha_j,j}:\mathbb{X}_j\to \mathbb{Y}_j $ is a bounded linear map. To prove $ (i) $ we begin by proving that $ \mathcal{C}_{0,j} $ is injective. If
		\begin{align*}
		\nabla\times \bm{u}&=0 &&\inn {\Omega}^0_j\\
		\nabla\cdot \bm{u}&=0&&\inn {\Omega}^0_j,\\
		u_3&=0&&\onn z=d_j,\\
		u_3&=0 && \onn z={d_{j-1}},
		\end{align*}
		then $ \Delta\bm{u}=0 $ so $ u_3=0 $, which means that
		\begin{align*}
			\partial_{z}u_1=\partial_{z}u_2&=0&& \inn \Omega^0_j.
		\end{align*}
		By continuity $ u_1 $ and $ u_2 $ satisfy Laplace equation with Neumann boundary conditions on $ z=d_j $ and $ z=d_{j-1} $. Together with the fact that $ u_1 $ and $ u_2 $ are periodic and satisfy the integral conditions
		\[ 
			\int_{\hat{\Omega}^0_j}u_i^{(j)}d\bm{x}=0,\qquad i=1,2,
		\]
		we obtain $ u_1=u_2=0 $. Hence $ \mathcal{C}_{0,j} $ is injective. To show that $ \mathcal{C}_{0,j} $ is surjective pick a general element $ (\bm{v},f,g)\in \mathbb{Y}_j $. We introduce $ \bm{A} $ satisfying
		\begin{align*}
			\Delta \bm{A}&=\bm{v}&&\inn \Omega_j^0,\\
			A_1=A_2=\partial_{z}A_3&=0 &&\onn  z=d_j,\\
			A_1=A_2=\partial_{z}A_3&=0 &&\onn  z=d_{j-1},\\
			\intertext{and $ \phi $ satisfying}
			\Delta \phi&=0&&\inn \Omega_j^0,\\
			\partial_{z}\phi=&=f &&\onn  z=d_j,\\
			\partial_{z}\phi=&=g &&\onn  z=d_{j-1}.
		\end{align*}
		By standard elliptic theory we can solve both these problems in the appropriate function spaces and it is not hard to check that $ \bm{u}=-\nabla\times \bm{A}+\nabla \phi $ satisfies
		\[ 
			\mathcal{C}_{0,j} \bm{u}=(\bm{v},f,g).
		\]
		For property $ (ii) $ we note that $ \mathcal{C}_{0,j}^{-1}\mathcal{C}_{\alpha_j,j}:\mathbb{X}_j\to \mathbb{X}_j $ is of the from `identity plus a compact operator'. Thus $ \mathcal{C}_{0,j}^{-1}\mathcal{C}_{\alpha_j,j}:\mathbb{X}_j\to \mathbb{X}_j $ is Fredholm with index $ 0 $ and it follows that so is $\mathcal{C}_{\alpha_j,j}:\mathbb{X}_j\to \mathbb{Y}_j $.
		
		Part $ (iii) $ follows from part $ (ii) $ and the fact that $ \mathcal{C}_{\alpha_j,j} $ is injective if and only if \cref{eq:no-resonance} is satisfied. Indeed, if we consider 
		\[ 
			\mathcal{C}_{\alpha_j,j}\bm{u}=(\bm{0},0,0)
		\]
		then the Fourier coefficients of $ \bm{u}(\bm{x}',z)=\sum_{\bm{k}\in\Lambda'} (\hat{u}_{1}(z,\bm{k}),\hat{u}_{2}(z,\bm{k}),\hat{u}_{3}(z,\bm{k}))e^{i\bm{k}\cdot \bm{x}'} $ must satisfy
		\begin{align} 
			\partial_z^2\hat{u}_{i}(z,\bm{k})+(\alpha_j^2-\vert\bm{k}\vert ^2)\hat{u}_{i}(z,\bm{k})&=0&&\inn \Omega_j^0,\label{eq:fourier_coeff}\\
			\hat{u}_{3}(z,\bm{k})&=0 &&\onn  z=d_j,\label{eq:fourier_coeffbdry1}\\
			\hat{u}_{3}(z,\bm{k})&=0&&\onn  z=d_{j-1}.\label{eq:fourier_coeffbdry2}
		\end{align}
		Under the condition in \cref{eq:no-resonance} this implies $ \hat{u}_{3}(z,\bm{k})\equiv 0 $ in $ \Omega_j^0 $ for all $ \bm{k}\in \Lambda' $. With this in mind we find that
		\begin{align*}
			(\bm{k}\cdot \bm{e}_1)\hat{u}_{1}(z,\bm{k})+(\bm{k}\cdot \bm{e}_2)\hat{u}_{2}(z,\bm{k})&=0&&\inn \Omega_j^0,\\
			-(\bm{k}\cdot \bm{e}_2)\hat{u}_{1}(z,\bm{k})+(\bm{k}\cdot \bm{e}_1)\hat{u}_{2}(z,\bm{k})&=0&&\inn \Omega_j^0,
		\end{align*}
		which implies $ \hat{u}_{1}(z,\bm{k})\equiv \hat{u}_{2}(z,\bm{k})\equiv 0 $ in $ \Omega_j^0 $ for all $ \bm{k}\in\Lambda' $, $ \bm{k}\neq 0 $. Moreover, \cref{eq:fourier_coeff}, the condition in \cref{eq:no-resonance} and the integral conditions $ \int_{\hat{\Omega}^0_j}u_i d\bm{x}=0$, $ i=1,2, $ give $ \hat{u}_{1,\bm{0}}\equiv \hat{u}_{2,\bm{0}}\equiv 0 $ in $ \Omega_j^0 $. Hence, we have shown $ \mathcal{C}_{\alpha_j,j} $ is injective under the condition in \cref{eq:no-resonance}. On the other hand, if \cref{eq:no-resonance} is not satisfied we can a find non-trivial solution $ \hat{u}_{3}(z,\bm{k}) $ to \crefrange{eq:fourier_coeff}{eq:fourier_coeffbdry2} for the $ \bm{k} $ that breaks the condition in \cref{eq:no-resonance}. This in turn gives a non-trivial element in the kernel of $ \mathcal{C}_{\alpha_j,j} $.
		\end{proof}
	
		\begin{proposition}\label{prop:reduction_to_bdry} 
			Assume the non-resonance condition in \eqref{eq:no-resonance} holds, then for any $\bm{\eta}\in \mathfrak{Y}$, $\bm{\tau}\in \mathfrak{Z}$, and $j\in\{1,\ldots, m\}$ there exists a unique solution $\bm{v}^{(j)}\in \mathfrak{X}_j$ to the problem given by \crefrange{eq:curl_flat}{eq:int_flat} provided that $\Vert \eta_{j-1}\Vert_{\mathfrak{Y}_{j-1}}\ll 1$ and $\Vert \eta_j\Vert_{\mathfrak{Y}_{j}}\ll 1$. Furthermore, $\bm{v}^{(j)}$ depends analytically on $\bm{\tau}$ and $\bm{\eta}$. If $\bm{\eta}$ is constant in the direction of $\bm{\lambda}_i$ then so is $\bm{v}^{(j)}$.
		\end{proposition}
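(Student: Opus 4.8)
The plan is to recast \crefrange{eq:curl_flat}{eq:int_flat}, for fixed $\bm{\eta},\bm{\tau}$, as a \emph{linear} operator equation for $\bm{v}^{(j)}$ and invert it perturbatively off $\bm{\eta}=0$, using the isomorphism property of $\mathcal{C}_{\alpha_j,j}$ from \Cref{lemma:iso}(iii). The starting observation is that, since $\bm{\mathcal{M}}_j[\bm{u},\bm{\eta}]=\bm{u}^{(j)}-\tfrac1{J_j}\mathcal{J}_j^{T}\mathcal{J}_j\bm{u}^{(j)}$ is linear in $\bm{u}$, the operator $\bm{N}_j$ from \Cref{prop:flat_prob} is affine in $\bm{v}$: writing $M_{\bm{\eta}}:=I-\tfrac1{J_j}\mathcal{J}_j^{T}\mathcal{J}_j$, one has $\bm{N}_j[\bm{v},\bm{\eta},\bm{\tau}]=M_{\bm{\eta}}\bm{v}+\bm{N}_j[0,\bm{\eta},\bm{\tau}]$, where $M_{\bm{\eta}}$ is a matrix multiplier with $C^{1,\delta}_{per}$ entries that vanish at $\bm{\eta}=0$ and depend analytically on $(\eta_{j-1},\eta_j)$ as long as $J_j>0$. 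Hence \crefrange{eq:curl_flat}{eq:int_flat} is equivalent to
\[
\mathcal{L}_{\bm{\eta}}\bm{v}^{(j)}=\mathcal{R}_j[\bm{\eta},\bm{\tau}],\qquad \mathcal{L}_{\bm{\eta}}\bm{v}:=\mathcal{C}_{\alpha_j,j}\bm{v}-\big(\nabla\times(M_{\bm{\eta}}\bm{v}),\,0,\,0\big),
\]
with $\mathcal{R}_j[\bm{\eta},\bm{\tau}]:=\big(\nabla\times\bm{N}_j[0,\bm{\eta},\bm{\tau}],\,U_1^{(j)}\partial_x\eta_j+U_2^{(j)}\partial_y\eta_j,\,U_1^{(j)}\partial_x\eta_{j-1}+U_2^{(j)}\partial_y\eta_{j-1}\big)$. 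A parity check (using that $\bm{N}_j$ maps into $\mathfrak{X}_j$ and that the traces $U_i^{(j)}|_{z=d_{j-1}},U_i^{(j)}|_{z=d_j}$ are constants) shows $\mathcal{L}_{\bm{\eta}}\colon\mathbb{X}_j\to\mathbb{Y}_j$ and $\mathcal{R}_j[\bm{\eta},\bm{\tau}]\in\mathbb{Y}_j$, while any solution of \crefrange{eq:curl_flat}{eq:int_flat} lying in $\mathfrak{X}_j$ automatically lies in $\mathbb{X}_j$ by \cref{eq:div_flat,eq:int_flat}.

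I would then invert $\mathcal{L}_{\bm{\eta}}$ by Neumann series. At $\bm{\eta}=0$ one has $M_{\bm{\eta}}=0$, so $\mathcal{L}_{\bm{\eta}}$ reduces to $\mathcal{C}_{\alpha_j,j}$, which is an isomorphism $\mathbb{X}_j\to\mathbb{Y}_j$ by \Cref{lemma:iso}(iii) under the non-resonance condition \cref{eq:no-resonance}; and since the entries of $M_{\bm{\eta}}$ vanish at $\bm{\eta}=0$, the operator norm of $\mathcal{L}_{\bm{\eta}}-\mathcal{C}_{\alpha_j,j}$ is bounded by $C\Vert M_{\bm{\eta}}\Vert_{C^{1,\delta}_{per}}$ and tends to $0$ as $\Vert\eta_{j-1}\Vert_{\mathfrak{Y}_{j-1}}+\Vert\eta_j\Vert_{\mathfrak{Y}_j}\to0$. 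This yields a threshold $\delta_0>0$ — independent of $\bm{\tau}$, since $\mathcal{L}_{\bm{\eta}}$ does not involve $\bm{\tau}$, and independent of the remaining components of $\bm{\eta}$, which enter neither $\mathcal{L}_{\bm{\eta}}$ nor $\mathcal{R}_j$ — such that $\mathcal{L}_{\bm{\eta}}$ is invertible whenever $\Vert\eta_{j-1}\Vert_{\mathfrak{Y}_{j-1}},\Vert\eta_j\Vert_{\mathfrak{Y}_j}<\delta_0$, and for such $\bm{\eta}$ the unique solution is $\bm{v}^{(j)}=\mathcal{L}_{\bm{\eta}}^{-1}\mathcal{R}_j[\bm{\eta},\bm{\tau}]\in\mathbb{X}_j\subset\mathfrak{X}_j$. (One could instead invoke the analytic implicit function theorem for $(\bm{v},\bm{\eta},\bm{\tau})\mapsto\mathcal{C}_{\alpha_j,j}\bm{v}-(\nabla\times\bm{N}_j[\bm{v},\bm{\eta},\bm{\tau}],U_1^{(j)}\partial_x\eta_j+U_2^{(j)}\partial_y\eta_j,\dots)$, whose $\bm{v}$-differential at $(0,0,\bm{\tau})$ equals $\mathcal{C}_{\alpha_j,j}$ because $D_{\bm{v}}\bm{N}_j[0,0,\bm{\tau}]=0$; but it is the linearity in $\bm{v}$ that promotes uniqueness to all of $\mathfrak{X}_j$ and makes $\delta_0$ uniform in $\bm{\tau}$.)

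Analyticity of $(\bm{\eta},\bm{\tau})\mapsto\bm{v}^{(j)}$ would follow by composition: $\bm{\eta}\mapsto\mathcal{L}_{\bm{\eta}}$ is analytic into the bounded operators $\mathbb{X}_j\to\mathbb{Y}_j$ (it is affine in $M_{\bm{\eta}}$, which is analytic where $J_j>0$), operator inversion is analytic on the open set of isomorphisms, and $(\bm{\eta},\bm{\tau})\mapsto\mathcal{R}_j[\bm{\eta},\bm{\tau}]$ is analytic — the only non-polynomial ingredients being $1/J_j$ and the coefficients $c_i^{(j)}[\bm{\tau},\bm{\eta}]$ defining $\tilde{\bm{U}}^{(j)}$, both shown analytic in the proof of \Cref{prop:flat_prob}, with the $\bm{\tau}$-dependence carried solely by the trigonometric laminar profiles $\bm{U}^{(j)}[\bm{\tau}]$. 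Finally, for the invariance claim I would argue by uniqueness: if $\bm{\eta}$ is constant in the direction $\bm{\lambda}_i$, then every ingredient of \crefrange{eq:curl_flat}{eq:int_flat} is invariant under the translations $\bm{x}'\mapsto\bm{x}'+s\bm{\lambda}_i$ (the multipliers $J_j,\mathcal{J}_j$ and the laminar fields $\tilde{\bm{u}}^{(j)},\tilde{\bm{U}}^{(j)}$ inherit this invariance, hence so do $M_{\bm{\eta}}$ and $\mathcal{R}_j$, while $\nabla\times$, $\nabla\cdot$, the traces at $z=d_{j-1},d_j$ and the integrals over $\hat{\Omega}_j^0$ commute with these translations), so the translate $\bm{v}^{(j)}(\cdot+s\bm{\lambda}_i)$ also solves the problem and must equal $\bm{v}^{(j)}$ by uniqueness, for every $s$. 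The main obstacle is really just bookkeeping — confirming the affine-in-$\bm{v}$ form, checking codomains and parities, and tracking the analyticity of the several auxiliary operators — since the one genuinely hard analytic input, invertibility of $\mathcal{C}_{\alpha_j,j}$, is already supplied by \Cref{lemma:iso}.
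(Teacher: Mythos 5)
Your overall route is sound and differs from the paper's in a worthwhile way. The paper simply writes \crefrange{eq:curl_flat}{eq:int_flat} as $\bm{G}[\bm{v},\bm{\eta},\bm{\tau}]=0$ on $\mathbb{X}\times\mathfrak{Y}\times\mathfrak{Z}$, notes that $D_{\bm{v}}\bm{G}[\bm{u},0,\bm{\tau}]=\bm{\mathcal{C}}_{\bm{\alpha}}$ is an isomorphism by \Cref{lemma:iso}, and invokes the analytic implicit function theorem; you instead exploit the observation that $\bm{N}_j$ is affine in $\bm{v}$ (which is correct: $\bm{\mathcal{M}}_j[\cdot,\bm{\eta}]$ is linear and $\bm{M}_j$ is independent of $\bm{v}$) to reduce the problem to the linear equation $\mathcal{L}_{\bm{\eta}}\bm{v}^{(j)}=\mathcal{R}_j[\bm{\eta},\bm{\tau}]$ and invert by a Neumann perturbation of $\mathcal{C}_{\alpha_j,j}$. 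This is essentially the same hard input packaged differently, but it buys you two things the paper's one-line IFT argument does not make explicit: uniqueness in all of $\mathfrak{X}_j$ rather than merely in a neighbourhood of $0$, and a smallness threshold for $\eta_{j-1},\eta_j$ that is uniform in $\bm{\tau}$ and in the remaining components of $\bm{\eta}$. Your analyticity argument by composition (analyticity of $\bm{\eta}\mapsto M_{\bm{\eta}}$ where $J_j>0$, of operator inversion, and of $\mathcal{R}_j$, the latter resting on the analyticity of $\tilde{\bm{U}}^{(j)}$ established in \Cref{prop:flat_prob}) is also fine.

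There is one genuine gap, in the final invariance claim. You argue that if $\bm{\eta}$ is constant in the direction $\bm{\lambda}_i$, then the translate $\bm{v}^{(j)}(\cdot+s\bm{\lambda}_i)$ solves the same problem and ``must equal $\bm{v}^{(j)}$ by uniqueness.'' But the uniqueness you have established lives in $\mathbb{X}_j\subset\mathfrak{X}_j$, whose definition includes the parity constraints $f(-\bm{x}',z)=\pm f(\bm{x}',z)$, and the translate of an even function is in general not even, hence not an element of the space in which your $\mathcal{L}_{\bm{\eta}}$ is invertible. As written the step does not close. Two repairs are available: (a) observe that the injectivity argument in the proof of \Cref{lemma:iso}(iii) is purely a Fourier-mode computation that never uses the parity, so $\mathcal{C}_{\alpha_j,j}$ (and hence $\mathcal{L}_{\bm{\eta}}$ for small $\bm{\eta}$) is injective on the larger space without symmetry, after which your translation argument applies verbatim; or (b) follow the paper and rerun the whole inversion in the subspace of functions constant in the $\bm{\lambda}_i$ direction --- this subspace \emph{is} contained in $\mathbb{X}_j$, so the solution produced there coincides with yours by the uniqueness you already have.
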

		\begin{proof} 
			We can write \crefrange{eq:curl_flat}{eq:int_flat} as
			\[ 
				\bm{G}[\bm{v},\bm{\eta},\bm{\tau}]=0,
			\]
			where $ \bm{G}:\mathbb{X}\to\mathbb{Y} $ is given by
			\begin{align*}
				G_j[\bm{v},\bm{\eta},\bm{\tau}]&=(\nabla\times \bm{v}^{(j)}-\alpha_j{\bm{v}}^{(j)}-\nabla\times \bm{N}_j[\bm{v},\bm{\eta},\bm{\tau}],\\
				&\qquad [v_3^{(j)}-U_1^{(j)}\partial_x\eta_{j}-U_2^{(j)}\partial_y\eta_{j}]\vert_{z=d_j},[v_3^{(j)}-U_1^{(j)}\partial_x\eta_{j-1}-U_2^{(j)}\partial_y\eta_{j-1}]\vert_{z=d_{j-1}})
			\end{align*}
			for the spaces $ \mathbb{X}=\mathbb{X}_1\times\ldots\times\mathbb{X}_m $ and $ \mathbb{Y}=\mathbb{Y}_1\times\ldots\times\mathbb{Y}_m $. The Fréchet derivative at $ \bm{\eta}=0 $ is given by
			\[ 
				D_{\bm{v}}\bm{G}[\bm{u},\bm{0},\bm{\tau}](\bm{v})=\bm{\mathcal{C}}_{\bm{\alpha}}\bm{v}=(\mathcal{C}_{\alpha_1,1}\bm{v}^{(1)},\ldots,\mathcal{C}_{\alpha_m,m}\bm{v}^{(m)}).
			\]
			By \cref{lemma:iso} every component of $ \bm{\mathcal{C}}_{\bm{\alpha}} $ is an isomorphism, whence so is $  \bm{\mathcal{C}}_{\bm{\alpha}} $. This means we can apply the analytic implicit function theorem \cite[Theorem 4.5.4]{Buffoni_2003} to obtain the result.
			
			The analysis can be repeated in function spaces where the elements are constant in the direction of $\bm{\lambda}_i$. This gives the result that if $\bm{\eta}$ is constant in the direction of $\bm{\lambda}_i$ then so is $\bm{v}^{(j)}$.
		\end{proof}
		With the solution $\bm{v}[\bm{\eta},\bm{\tau}]$ from this proposition we can write \crefrange{eq:curl_flat}{eq:dyn_flat} as the single equation
		\begin{equation}\label{eq:SingleSurf}
			\bm{F}[\bm{\eta},\bm{\tau}]=0,
		\end{equation}
		where $ \bm{F}:\mathfrak{Y}\times \mathfrak{Z}\mapsto \mathfrak{W}$ is the operator defined through
		\begin{align*} 
			F_j[\bm{\eta},\bm{\tau}]=\left[B_j[\bm{v}[\bm{\eta},\bm{\tau}],\bm{\eta},\bm{\tau}]+R_j[\bm{v}[\bm{\eta},\bm{\tau}],\bm{\eta},\bm{\tau}]\right]_{z=d_j}.
		\end{align*}
		
	\section{The linearised problem}\label{sec:lin_prob}
		The aim is to apply the bifurcation theorem in \Cref{sec:AppendixBif} to find nontrivial solutions to \Cref{eq:SingleSurf}. To this end we have to study the linearised problem to obtain the Fréchet derivatives $ D_{\bm{\eta}} \bm{F}[0,\bm{\tau}]$ and $ D_{\bm{\tau}}D_{\bm{\eta}}\bm{F}[0,\bm{\tau}]$.
		If we set $\bm{W}^{(j)}=\bm{W}^{(j)}[\bm{\eta},\bm{\tau}]\coloneqq D_{\bm{\eta}}\bm{v}^{(j)}[0,\bm{\tau}](\bm{\eta})$, $ j=1,\cdots,{m} $, then
		\begin{equation}\label{eq:surf_prob_lin}
		\begin{aligned}
		D_{\bm{\eta}}F_j[0,\bm{\tau}](\bm{\eta})&=B_j[\bm{W},\bm{\eta},\bm{\tau}]\vert_{z=d_j}\\
		&=\bigg[\rho_j(U_1^{(j)} W^{(j)}_1+U_2^{(j)} W^{(j)}_2+g\eta_{j})\\
			&\qquad-\rho_{j+1}(U_1^{(j+1)}W_1^{(j+1)}+U_2^{(j+1)}W_2^{(j+1)}+g\eta_{j})
			-\sigma_{j}\Delta \eta_{j}\bigg]_{z=d_j}.
		\end{aligned}
		\end{equation}
		To show that this satisfies the properties required to apply \Cref{thm:bif} we will rewrite it in a more workable form summarized in \cref{lemma:Aproperties} at the end of this section. We cannot immediately give the result since we need to find $ \bm{W}^{(j)} $ in terms of $ \bm{\eta} $ and $ \bm{\tau} $ first. $ \bm{W}^{(j)} $ solves the linearised version of \crefrange{eq:curl_flat}{eq:int_flat} given by
		\begin{align}
			\nabla\times \bm{W}^{(j)}-\alpha_j{\bm{W}}^{(j)}&=0&&\inn \Omega_j,\label{eq:curl_lin}\\
			\nabla \cdot\bm{W}^{(j)}&=0&&\inn \Omega_j^0,\label{eq:div_lin}\\
			W_3^{(j)}&=U_1^{(j)}\partial_x\eta_{j-1}+U_2^{(j)}\partial_y\eta_{j-1} &&\onn z=d_{j-1},\label{eq:bdry_below_lin}\\
			W_3^{(j)}&=U_1^{(j)}\partial_x\eta_{j}+U_2^{(j)}\partial_y\eta_{j}&&\onn z=d_j,\label{eq:bdry_above_lin}\\
			\int_{\hat{\Omega}_j^0}W_i^{(j)}d\bm{x}&=0&& i=1,2.\label{eq:int_lin}
			\end{align}
			for $j=1,\ldots,m$. We express $ \bm{\eta} $ in terms of its Fourier series
			\[ 
				\bm{\eta}=\sum_{k\in \Lambda'} \hat{\bm\eta}(\bm{k}) e^{i\bm{k}\cdot\bm{x}'},
			\]
			where $ \hat{\bm{\eta}}(k)=(\hat{\eta}_1(k),\cdots,\hat{\eta}_n(k)) $. Since the problem is linear we can consider every Fourier mode of $ \bm{\eta} $ separately.
			Thus assume $ \bm{\eta}=\hat{\bm{\eta}}(\bm{k})e^{i\bm{k}\cdot\bm{x}'} $ for some $ \bm{k}\in\Lambda' $ and recall the polar form $\bm{k}=k(\cos(\gamma),\sin(\gamma))$. Then the solution will be of the form
				\[ 
					\bm{W}^{(j)}(x,y,z)=(W^{(j)}_1(z),W^{(j)}_2(z),W^{(j)}_3(z))e^{i\bm{k}\cdot\bm{x}'}
				\]
			Combining \cref{eq:curl_lin,eq:div_lin} gives the equation
			\[ 
				(W^{(j)}_3)''(z)-(\vert\bm{k}\vert^2-\alpha_j^2)W^{(j)}_3(z)=0
			\]
			for $ W^{(j)}_3 (z)$. This equation together with the boundary conditions
			\begin{align*}
			W^{(j)}_3(d_{j-1})&=i(U_1^{(j)}(d_{j-1})k\cos(\gamma)\hat{\eta}_{j-1}(\bm{k})+U_2^{(j)}(d_{j-1})k\sin(\gamma)\hat{\eta}_{j-1}(\bm{k})),\\
			W^{(j)}_3(d_{j})&=i(U_1^{(j)}(d_{j})k\cos(\gamma)\hat{\eta}_{j}(\bm{k})+U_2^{(j)}(d_{j})k\sin(\gamma)\hat{\eta}_{j}(\bm{k})),
			\end{align*}
			obtained from \cref{eq:bdry_below_lin,eq:bdry_above_lin}, has a unique solution when the non-resonance condition \eqref{eq:no-resonance} is satisfied. When $ \bm{k}=0 $ we obtain $ W^{(j)}_3=0 $. Moreover, in this case we also get $ W^{(j)}_1(z)=W^{(j)}_2(z)=0 $ due to the integral conditions in \cref{eq:int_lin} together with the non-resonance condition. Substituting this into \cref{eq:surf_prob_lin} gives
			\begin{equation}\label{eq:lin_prob_k0}
				F_j[0,\bm{\tau}](\hat{\bm{\eta}}(0))=(\rho_j-\rho_{j+1})g\hat{\eta}_j(0)
			\end{equation}
		
		To express the solution for $ \bm{k}\neq 0 $ we introduce $\phi_j(z,\bm{k})$ and $ \psi_j(z,\bm{k}) $ which solve
		\[
		\begin{aligned}
			\phi_j''(z,\bm{k})-(\vert \bm{k}\vert^2-\alpha_j^2)\phi_j(z,\bm{k})&=0,\\
			\phi_j(d_{j-1},\bm{k})&=1,\\
			\phi_j(d_j,\bm{k})&=0,
		\end{aligned}
		\qquad
		\begin{aligned}
			\psi_j''(z,\bm{k})-(\vert \bm{k}\vert^2-\alpha_j^2)\psi_j(z,\bm{k})&=0,\\
			\psi_j(d_{j-1},\bm{k})&=0,\\
			\psi_j(d_j,\bm{k})&=1,
		\end{aligned}
		\]
		respectively, where prime denotes derivative with respect to $ z $.
		Explicitly $ \psi_j(z,\bm{k}) $ is given by
		\[
			\psi_j(z,\bm{k})=\left\{
			\begin{aligned}
				&\frac{\sinh(\sqrt{(\vert \bm{k}\vert^2-\alpha_j^2)}(z-d_{j-1}))}{\sinh(\sqrt{(\vert \bm{k}\vert^2-\alpha_j^2)}(d_{j}-d_{j-1}))} &&\vert k\vert >\vert \alpha_j\vert,\\
				&\frac{z-d_{j-1}}{d_{j}-d_{j-1}}&&\vert \bm{k}\vert =\vert \alpha_j\vert \\
				&\frac{\sin(\sqrt{(\vert \bm{k}\vert^2-\alpha_j^2)}(z-d_{j-1}))}{\sin(\sqrt{(\vert \bm{k}\vert^2-\alpha_j^2)}(d_{j}-d_{j-1}))} &&\vert k\vert <\vert \alpha_j\vert,
			\end{aligned}\right.
		\]
		and from the equation it can easily be seen that we can obtain $ \phi_j(z,\bm{k}) $ by interchanging $ d_j $ and $ d_{j-1} $ in the expression for $ \psi_j(z,\bm{k}) $. 
		In terms of these functions we get
		\[
			W_3^{(j)}(z)=irk(\hat{\eta}_{j-1}\beta_{j-1}(\bm{k})\phi_j(z,\bm{k})+\hat{\eta}_{j}\beta_j(\bm{k})\psi_j(z,\bm{k})),
		\]
		where $ \beta_j(\bm{k})\coloneqq \cos(\theta_j-\alpha_jd_j-\gamma) $ satisfying
		\[
			(U_1^{(j)}(d_j),U_2^{(j)}(d_j))\cdot \bm{k}=rk\beta_j(\bm{k}).
		\] 
		We have also used the fact that our choice of $\bm{\theta}$ means that
		\[
		(U_1^{(j)}(d_{j-1}),U_2^{(j)}(d_{j-1}))\cdot \bm{k}=(U_1^{(j-1)}(d_{j-1}),U_2^{(j-1)}(d_{j-1}))\cdot \bm{k}=rk\beta_{j-1}(\bm{k})
		\]
		For future reference we also define $ \beta_j^\perp(\bm{k})\coloneqq \sin(\theta_j-\alpha_jd_j-\gamma) $, which satisfies
		\[ 
			(U_1^{(j)}(d_j),U_2^{(j)}(d_j))\cdot \bm{k}^\perp=rk\beta_j^\perp(\bm{k}),
		\]
		where $ \bm{k}^\perp=k(-\sin(\gamma),\cos(\gamma)) $.
		Using the expression for $ W_3^{(j)}(z) $ we find
		\begin{align*}
			W_1^{(j)}(z)&=-\frac{i}{\vert\bm{k}\vert^2}(-k\cos(\gamma)\partial_z W_3^{(j)}(z)-k\sin(\gamma)\alpha_j W_3^{(j)}(z)),\\
			W_2^{(j)}(z)&=-\frac{i}{\vert\bm{k}\vert^2}(-k\sin(\gamma)\partial_z W_3^{(j)}(z)+k\cos(\gamma)\alpha_j W_3^{(j)}(z)).
		\end{align*}
		Substituting these expressions for $\bm{W}$ into \cref{eq:surf_prob_lin} gives
		\begin{equation}\label{eq:lin_prob_kn0}
		\begin{aligned}
			D_{\bm{\eta}}F_{j}[0,\bm{\tau}]\left( \hat{\bm\eta}(\bm{k}) e^{i\bm{k}\cdot\bm{x}'}\right)&=\bigg(r^2\beta_{j}(\bm{k})\bigg[\Big(-\beta_{j}(\bm{k})[\rho_{j+1}\psi_{j+1}'(d_{j+1},\bm{k})+\rho_j\psi_{j}'(d_j,\bm{k})]\\
			&\qquad\qquad\qquad\qquad+\beta_{j}^\perp(\bm{k})[\rho_j\alpha_j-\rho_{j+1}\alpha_{j+1}]\Big)\hat{\eta}_{j}(\bm{k})\\
			&\qquad\qquad\qquad+\beta_{j+1}(\bm{k})\rho_{j+1}\psi_{j+1}'(d_j,\bm{k})\hat{\eta}_{j+1}(\bm{k})\\
			&\qquad\qquad\qquad+\beta_{j-1}(\bm{k})\rho_j\psi_{j}'(d_{j-1},\bm{k})\hat{\eta}_{j-1}(\bm{k})\bigg]\\
			&\qquad+(\rho_{j}-\rho_{j+1})g\hat{\eta}_{j}(\bm{k})+\sigma_{j}\vert\bm{k}\vert^2\hat{\eta}_{j}(\bm{k})\bigg)e^{i\bm{k}\cdot\bm{x}'},
		\end{aligned}
		\end{equation}
		where we have used the identities $ \phi_j'(d_{j-1},\bm{k})=-\psi_j'(d_j,\bm{k}) $ and $ \phi_j'(d_{j},\bm{k})=-\psi_j'(d_{j-1},\bm{k}) $ to replace all instances of $ \phi'_j $ with $ \psi'_j $.
		\begin{lemma}\label{lemma:Aproperties}
		The Fréchet derivative of $\bm{F}$ can be expressed as
		\[ 
			D_{\bm{\eta}}\bm{F}[0,\bm{\tau}](\bm{\eta})=\sum_{\bm{k}\in\Lambda'}D_{\bm{\eta}}\bm{F}[0,\bm{\tau}]\left(\hat{\bm{\eta}}(\bm{k})e^{i\bm{k}\cdot\bm{x}'}\right)=	\sum_{\bm{k}\in\Lambda'}A(\bm{\tau},\bm{k})\hat{\bm{\eta}}(\bm{k})e^{i\bm{k}\cdot\bm{x}'},
		\]
		where $ A(\bm{\tau},\bm{k}) $ is a $ n\times n $ matrix defined by \cref{eq:lin_prob_k0,eq:lin_prob_kn0}. In particular, $ A(\bm{\tau},\bm{k}) $ is a tridiagonal symmetric matrix.
	\end{lemma}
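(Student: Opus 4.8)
The plan is to read off the matrix $A(\bm\tau,\bm k)$ directly from the two formulas \cref{eq:lin_prob_k0,eq:lin_prob_kn0} and verify the claimed structural properties mode by mode. Fix $\bm k\in\Lambda'$. For $\bm k=0$, \cref{eq:lin_prob_k0} shows $D_{\bm\eta}F_j[0,\bm\tau](\hat{\bm\eta}(0))=(\rho_j-\rho_{j+1})g\,\hat\eta_j(0)$, which depends only on $\hat\eta_j(0)$; hence $A(\bm\tau,0)$ is the diagonal matrix with entries $(\rho_j-\rho_{j+1})g>0$, which is trivially tridiagonal and symmetric. For $\bm k\neq 0$, inspecting \cref{eq:lin_prob_kn0} one sees that $D_{\bm\eta}F_j[0,\bm\tau]$ involves only the three unknowns $\hat\eta_{j-1}(\bm k)$, $\hat\eta_j(\bm k)$, $\hat\eta_{j+1}(\bm k)$; this immediately gives that $A(\bm\tau,\bm k)$ is tridiagonal, with (using the convention $\hat\eta_0=\hat\eta_{n+1}=0$, which I would note explicitly) diagonal entry
\[
A_{jj}=r^2\beta_j(\bm k)\Bigl(-\beta_j(\bm k)[\rho_{j+1}\psi_{j+1}'(d_{j+1},\bm k)+\rho_j\psi_j'(d_j,\bm k)]+\beta_j^\perp(\bm k)[\rho_j\alpha_j-\rho_{j+1}\alpha_{j+1}]\Bigr)+(\rho_j-\rho_{j+1})g+\sigma_j|\bm k|^2,
\]
superdiagonal entry $A_{j,j+1}=r^2\beta_j(\bm k)\beta_{j+1}(\bm k)\rho_{j+1}\psi_{j+1}'(d_j,\bm k)$, and subdiagonal entry $A_{j,j-1}=r^2\beta_j(\bm k)\beta_{j-1}(\bm k)\rho_j\psi_j'(d_{j-1},\bm k)$.

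The only nontrivial point is symmetry, i.e. $A_{j,j+1}=A_{j+1,j}$. Comparing the two formulas above, $A_{j,j+1}=r^2\beta_j\beta_{j+1}\rho_{j+1}\psi_{j+1}'(d_j,\bm k)$ while $A_{j+1,j}=r^2\beta_{j+1}\beta_j\rho_{j+1}\psi_{j+1}'(d_{(j+1)-1},\bm k)=r^2\beta_j\beta_{j+1}\rho_{j+1}\psi_{j+1}'(d_j,\bm k)$; the two coincide term by term, the common factor being $r^2\rho_{j+1}\beta_j(\bm k)\beta_{j+1}(\bm k)\psi_{j+1}'(d_j,\bm k)$. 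So symmetry is in fact manifest once both off-diagonal entries are written out, and no identity on the $\psi_j$ is even needed for it — the work is simply to extract the coefficient of $\hat\eta_{j+1}(\bm k)$ in the $j$-th equation and the coefficient of $\hat\eta_j(\bm k)$ in the $(j+1)$-th equation from \cref{eq:lin_prob_kn0} and observe they are literally the same expression. I would also remark that the boundary cases $j=1$ and $j=m$ (and the distinction between $m=n$ and $m=n+1$) are consistent with this, since the terms carrying $\hat\eta_0$ or $\hat\eta_{n+1}$ vanish by the convention $\hat\eta_0=\hat\eta_{n+1}=0$, and when $m=n$ the $\rho_{n+1}=0$ row is absorbed correctly because $\rho_{n+1}$ multiplies the relevant terms.

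The main (very mild) obstacle is bookkeeping: making sure the index ranges line up, that the $\phi_j'\leftrightarrow\psi_j'$ substitutions quoted after \cref{eq:lin_prob_kn0} have been applied uniformly so that the sub- and superdiagonal entries are displayed in directly comparable form, and that the edge rows are handled by the stated sign/zero conventions rather than by separate ad hoc formulas. Since \cref{eq:lin_prob_k0,eq:lin_prob_kn0} were derived in the preceding computation, the proof is essentially just "unpack the definition of $A(\bm\tau,\bm k)$ and compare entries," and can be given in a few lines.
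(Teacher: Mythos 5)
Your proposal is correct and follows essentially the same route as the paper: tridiagonality is read off from the fact that the $j$-th component of \eqref{eq:lin_prob_kn0} involves only $\hat\eta_{j-1}(\bm k)$, $\hat\eta_j(\bm k)$, $\hat\eta_{j+1}(\bm k)$, and symmetry follows by writing out $a_{j,j+1}$ and $a_{j+1,j}$ and observing they are literally the same expression $r^2\rho_{j+1}\beta_j(\bm k)\beta_{j+1}(\bm k)\psi_{j+1}'(d_j,\bm k)$. Your extra remarks on the edge rows and the convention $\hat\eta_0=\hat\eta_{n+1}=0$ are consistent with the paper's setup.
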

\begin{proof}
	Because $D_{\bm{\eta}}F_{j}[0,\bm{\tau}]( \hat{\bm\eta}(\bm{k}) e^{i\bm{k}\cdot\bm{x}'})$ only depends on $ \hat{\eta}_{j-1}(\bm{k}) $, $ \hat{\eta}_j(\bm{k}) $ and $ \hat{\eta}_{j+1}(\bm{k}) $ the matrix is tridiagonal. To show that the matrix is symmetric we consider
	\begin{align*}
	a_{j,j+1}&=\rho_{j+1}r^2\beta_{j}(\bm{k})\beta_{j+1}(\bm{k})\psi'_{j+1}(d_j,\bm{k}),\\
	a_{j+1,j}&=\rho_{j+1}r^2\beta_{j}(\bm{k})\beta_{j+1}(\bm{k})\psi'_{j+1}(d_j,\bm{k}),
	\end{align*}
	that is, $ a_{j,j+1}=a_{j+1,j} $. Hence the matrix is symmetric.
\end{proof}

\section{Existence result}\label{sec:existence}
We will begin with stating an assumption on the matrix $ A $ from \cref{lemma:Aproperties} that will immediately allow us to apply \Cref{thm:bif} in \Cref{sec:AppendixBif} to obtain an existence result.
\begin{assumption}\label{assumption:first}
	There exists $\bm{\tau}^*\in\mathfrak{Z}$ such that:
	\begin{itemize}
		\item[(i)] $\ker A(\bm{\tau}^*,\bm{k}_1)=\spann\{\bm{\hat{\eta}}(\bm{k}_1)\}$ and $\ker A(\bm{\tau}^*,\bm{k}_2)=\spann\{\bm{\hat{\eta}}(\bm{k}_2)\}$.
		\item[(ii)] The matrix with entries $ \nu_{i,l} $ given by
		\begin{align*}
		\nu_{i,1}=\bm{\hat{\eta}}(\bm{k}_i)\cdot \partial_r A(\bm{\tau}^*,\bm{k}_i)\bm{\hat{\eta}}(\bm{k}_i)\\
		\nu_{i,2}=\bm{\hat{\eta}}(\bm{k}_i)\cdot \partial_{\theta} A(\bm{\tau}^*,\bm{k}_i)\bm{\hat{\eta}}(\bm{k}_i)
		\end{align*}
		is invertible.
		\item[(iii)]
		\[
		\det A(\bm{\tau}^*,\bm{k})\neq 0 \qquad \qquad\text{for all }\bm{k}\in\Lambda':\bm{k}\neq \pm \bm{k}_i, \qquad i=1,2,
		\]
	\end{itemize}
\end{assumption}
\begin{remark}
	Since $A(\bm{\tau},\bm{k})=A(\bm{\tau},-\bm{k})$ we immediately get $\ker A(\bm{\tau}^*,-\bm{k}_1)=\spann\{\bm{\hat{\eta}}(\bm{k}_1)\}$ and $\ker A(\bm{\tau}^*,-\bm{k}_2)=\spann\{\bm{\hat{\eta}}(\bm{k}_2)\}$ from part \textit{(i)} of this assumption. Together with part \textit{(iii)} this implies
	\[
	\ker D_{\bm{\eta}}\bm{F}[0,\bm{\tau}^*]=\spann\{\bm{\hat{\eta}}(\bm{k}_1)\cos(\bm{k}_1\cdot \bm{x}'),\bm{\hat{\eta}}(\bm{k}_2)\cos(\bm{k}_2\cdot \bm{x}')\},
	\]
	because $D_{\bm{\eta}}\bm{F}[0,\bm{\tau}]$ is an operator the space $\mathfrak{Y}$ consisting of even functions
\end{remark}
\Cref{assumption:first} is specifically designed to make $ \bm{F} $ from  \cref{eq:SingleSurf} satisfy the assumptions in \Cref{thm:bif}. So proving an existence result at this point under the assumption is rather straightforward. The difficulty lies in proving the validity of the assumption itself, which is a problem studied in \Cref{sec:assumption}. However, the analysis of the assumption in this paper is not completely exhaustive, which means that the existence theorem below remains valid in a larger subset of the parameter space than shown in \Cref{sec:assumption}. With this assumption in hand we are ready to prove the first and more abstract version of the main result.

\begin{theorem}\label{thm:exist} If \cref{assumption:first} and \cref{eq:no-resonance} holds then there exits an $ \epsilon>0 $ such that for every $\bm{t}=(t_1,t_2)\in B_{\epsilon}(0)$ there exist parameters $\bm{\tau}(\bm{t})\in \mathfrak{Z}$ and corresponding solutions $\bm{v}(\bm{t})\in \mathfrak{X}^0$ and $\bm{\eta}(\bm{t})\in \mathfrak{Y}$ to \crefrange{eq:curl_flat}{eq:dyn_flat} such that the map
\[
	\bm{t}\mapsto (\bm{\tau}(\bm{t}),\bm{v}(\bm{t}),\bm{\eta}(\bm{t}))
\]
is real analytic and
\[
	\bm{\tau}=\bm{\tau}^*+\mathcal{O}(\vert \bm{t}\vert),\qquad \bm{\eta}=t_1\hat{\bm{\eta}}(\bm{k}_1)\cos(\bm{k}_1\cdot \bm{x}')+t_2\hat{\bm{\eta}}(\bm{k}_2)\cos(\bm{k}_2\cdot\bm{x}')+\mathcal{O}(\vert \bm{t}\vert^2).
\]
In particular, this means that there exist nontrivial solutions to \crefrange{eq:curl}{eq:int_cond}.
\end{theorem}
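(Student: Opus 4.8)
The plan is to check that the reduced operator $\bm{F}$ from \eqref{eq:SingleSurf} satisfies the hypotheses of the abstract multi-parameter bifurcation theorem of \Cref{sec:AppendixBif} (\Cref{thm:bif}) at the point $(\bm{\eta},\bm{\tau})=(0,\bm{\tau}^*)$ furnished by \Cref{assumption:first}, and then to transport the resulting solution family back through the reductions of \Cref{sec:flat} (\Cref{prop:reduction_to_bdry} and \Cref{prop:flat_prob}) to the original system.

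\textbf{Structural facts about $\bm{F}$.} First I would record that $(\bm{\eta},\bm{\tau})\mapsto\bm{F}[\bm{\eta},\bm{\tau}]$ is real analytic near $(0,\bm{\tau}^*)$: this follows from the analytic dependence of $\bm{v}[\bm{\eta},\bm{\tau}]$ in \Cref{prop:reduction_to_bdry} together with the explicit analytic form of $B_j$ and $R_j$ in \Cref{prop:flat_prob}. Next, the trivial solutions give $\bm{F}[0,\bm{\tau}]=0$ for all $\bm{\tau}\in\mathfrak{Z}$: when $\bm{\eta}=0$ we have $J_j\equiv1$, $\varphi_j\equiv z$, so $\tilde{\bm{u}}[0,\bm{\tau}]=0$ and $\tilde{\bm{U}}[0,\bm{\tau}]=\bm{U}[\bm{\tau}]$, whence $\bm{v}[0,\bm{\tau}]=0$ and then $B_j[0,0,\bm{\tau}]=R_j[0,0,\bm{\tau}]=0$. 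In particular $D_{\bm{\tau}}\bm{F}[0,\bm{\tau}^*]=0$, so the only mixed derivative entering the bifurcation analysis is $D_{\bm{\tau}}D_{\bm{\eta}}\bm{F}[0,\bm{\tau}^*]$.

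\textbf{Fredholm analysis of $L\coloneqq D_{\bm{\eta}}\bm{F}[0,\bm{\tau}^*]$.} By \Cref{lemma:Aproperties}, $L$ acts on Fourier modes by multiplication by the symmetric tridiagonal matrix $A(\bm{\tau}^*,\bm{k})$, and from \eqref{eq:lin_prob_kn0} one checks that $A(\bm{\tau}^*,\bm{k})=\mathrm{diag}(\sigma_1,\ldots,\sigma_n)\,|\bm{k}|^2+\mathcal{O}(|\bm{k}|)$ as $|\bm{k}|\to\infty$ (the $\psi_j'$-terms grow like $|\bm{k}|$ and the off-diagonal entries decay exponentially), while $A(\bm{\tau}^*,0)=\mathrm{diag}\big((\rho_j-\rho_{j+1})g\big)$ is invertible by the density ordering. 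Hence $L$ differs from the second-order elliptic operator $\bm{\eta}\mapsto(-\sigma_j\Delta\eta_j)_j$ by an operator that is compact as a map $\mathfrak{Y}\to\mathfrak{W}$, so standard Hölder elliptic theory gives that $L$ is Fredholm of index $0$. For the kernel: $L\bm{\eta}=0$ forces $A(\bm{\tau}^*,\bm{k})\hat{\bm{\eta}}(\bm{k})=0$ for every $\bm{k}$, so by \Cref{assumption:first}(iii) $\hat{\bm{\eta}}(\bm{k})=0$ for $\bm{k}\neq\pm\bm{k}_i$, and by (i) $\hat{\bm{\eta}}(\pm\bm{k}_i)\in\spann\{\hat{\bm{\eta}}(\bm{k}_i)\}$; since we work in even spaces this yields $\ker L=\spann\{\hat{\bm{\eta}}(\bm{k}_1)\cos(\bm{k}_1\cdot\bm{x}'),\,\hat{\bm{\eta}}(\bm{k}_2)\cos(\bm{k}_2\cdot\bm{x}')\}$, as in the Remark following \Cref{assumption:first}. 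Symmetry of each $A(\bm{\tau}^*,\bm{k})$ then identifies the range of $L$ as $\{\bm{f}\in\mathfrak{W}:\int_{\Gamma}\bm{f}\cdot\hat{\bm{\eta}}(\bm{k}_i)\cos(\bm{k}_i\cdot\bm{x}')\,d\bm{x}'=0,\ i=1,2\}$, a closed subspace of codimension two paired with $\ker L$ via the $L^2(\Gamma)$ inner product.

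\textbf{Transversality and conclusion.} It remains to verify the non-degeneracy condition of \Cref{thm:bif}, which (because distinct Fourier frequencies are $L^2(\Gamma)$-orthogonal, so the $\bm{k}_1$- and $\bm{k}_2$-blocks do not interact) reduces to the invertibility of the $2\times2$ matrix whose $(i,l)$ entry is a nonzero multiple of $\big\langle\hat{\bm{\eta}}(\bm{k}_i)\cos(\bm{k}_i\cdot\bm{x}'),\,D_{\tau_l}L\,\hat{\bm{\eta}}(\bm{k}_i)\cos(\bm{k}_i\cdot\bm{x}')\big\rangle_{L^2(\Gamma)}$; since $D_{\tau_l}L$ acts on the $\bm{k}_i$-mode by $\partial_{\tau_l}A(\bm{\tau}^*,\bm{k}_i)$ (with $\tau_1=r$, $\tau_2=\theta$), this matrix equals $\tfrac{|\Gamma|}{2}\,(\nu_{i,l})$, which is invertible by \Cref{assumption:first}(ii). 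With all hypotheses of \Cref{thm:bif} in place I obtain $\epsilon>0$ and a real-analytic map $\bm{t}\mapsto(\bm{\tau}(\bm{t}),\bm{\eta}(\bm{t}))$ on $B_{\epsilon}(0)$ with $\bm{F}[\bm{\eta}(\bm{t}),\bm{\tau}(\bm{t})]=0$, $\bm{\tau}(\bm{0})=\bm{\tau}^*$, and $\bm{\eta}(\bm{t})=t_1\hat{\bm{\eta}}(\bm{k}_1)\cos(\bm{k}_1\cdot\bm{x}')+t_2\hat{\bm{\eta}}(\bm{k}_2)\cos(\bm{k}_2\cdot\bm{x}')+\mathcal{O}(|\bm{t}|^2)$; setting $\bm{v}(\bm{t})\coloneqq\bm{v}[\bm{\eta}(\bm{t}),\bm{\tau}(\bm{t})]$ from \Cref{prop:reduction_to_bdry} (analytic in its arguments, hence $\bm{t}\mapsto(\bm{\tau}(\bm{t}),\bm{v}(\bm{t}),\bm{\eta}(\bm{t}))$ is analytic) gives solutions of \crefrange{eq:curl_flat}{eq:dyn_flat}, and undoing the flattening through \Cref{prop:flat_prob} produces solutions of \crefrange{eq:curl}{eq:int_cond}; these are nontrivial since the leading term of $\bm{\eta}(\bm{t})$ is $t_1\hat{\bm{\eta}}(\bm{k}_1)\cos(\bm{k}_1\cdot\bm{x}')+t_2\hat{\bm{\eta}}(\bm{k}_2)\cos(\bm{k}_2\cdot\bm{x}')\not\equiv0$ for $\bm{t}\neq\bm{0}$. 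I expect the main obstacle to be the careful matching of the abstract hypotheses of \Cref{thm:bif} to the concrete data: establishing the Fredholm property of $L$ on the Hölder scale, pinning down its cokernel via the symmetry of $A(\bm{\tau}^*,\bm{k})$, and confirming that the cross-frequency contributions to the transversality matrix genuinely vanish so that it collapses to $(\nu_{i,l})$.
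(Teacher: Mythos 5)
Your overall strategy is exactly the paper's: verify the hypotheses of \Cref{thm:bif} for $\bm{F}$ at $(0,\bm{\tau}^*)$ and transport the solution family back through \Cref{prop:reduction_to_bdry,prop:flat_prob}. Your treatment of conditions \textit{(i)}--\textit{(iii)} is correct and in places more detailed than the paper's (the growth $A(\bm{\tau},\bm{k})\sim\sigma_j|\bm{k}|^2$, the identification of the cokernel via the symmetry of $A$, and the reduction of the transversality matrix to $(\nu_{i,l})$ all match what the paper does).

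However, there is a genuine gap: you never verify condition \textit{(iv)} of \Cref{thm:bif}, and then assert ``with all hypotheses of \Cref{thm:bif} in place.'' Condition \textit{(iv)} is the one hypothesis with no analogue in the Crandall--Rabinowitz theorem, and it is not a statement about the linearization --- it requires closed subspaces $\tilde{\mathcal{X}}_i$, $\tilde{\mathcal{Y}}_i$ such that the \emph{nonlinear} operator $\bm{F}$ maps $\hat{\mathcal{X}}_i\oplus\tilde{\mathcal{X}}_i$ into $\hat{\mathcal{Y}}_i\oplus\tilde{\mathcal{Y}}_i$. This invariance is what allows one to factor $P_i\bm{F}[\cdot]=s_iH_i(\bm{s},\bm{c})$ in the Lyapunov--Schmidt reduction; your observation that distinct Fourier modes are $L^2(\Gamma)$-orthogonal only handles the linear block structure and does not substitute for it. The paper closes this by taking $\hat{\mathcal{X}}_i\oplus\tilde{\mathcal{X}}_i$ and $\hat{\mathcal{Y}}_i\oplus\tilde{\mathcal{Y}}_i$ to be the subspaces of functions constant in the $\bm{\lambda}_l$-direction for $l\neq i$, which is legitimate precisely because of the final assertion of \Cref{prop:reduction_to_bdry} (if $\bm{\eta}$ is constant in the direction of $\bm{\lambda}_l$ then so is $\bm{v}[\bm{\eta},\bm{\tau}]$, hence so is $\bm{F}[\bm{\eta},\bm{\tau}]$). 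You should add this verification; without it the abstract theorem cannot be invoked.
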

\begin{proof} We prove the theorem by checking conditions \textit{(i)}--\textit{(iv)} of \Cref{thm:bif} for 
	\[
	\bm{F}:\mathfrak{Y}\times \mathfrak{Z}\mapsto \mathfrak{W}
	\] from \cref{eq:SingleSurf}. Where $ \mathcal{X}_1, \mathcal{X}_2 $ and $ \mathcal{Y}_1,\mathcal{Y}_2 $ are the subspaces of $\mathfrak{Y} $ and $ \mathfrak{W} $ spanned by $ x_1=y_1=\hat{\bm{\eta}}(\bm{k}_1)\cos(\bm{k}_1\cdot \bm{x}') $ and $ x_2=y_2=\hat{\bm{\eta}}(\bm{k}_2)\cos(\bm{k}_2\cdot \bm{x}') $ respectively. It follows from the fact that $ A $ is symmetric that the kernel and cokernel of $ D_{\bm{\eta}}\bm{F}[0,\bm{\tau}^*] $ are spanned by the same functions.
	
	Condition \textit{(i)} of \Cref{thm:bif} is trivially satisfied.
That the derivative $ D_{\bm{\eta}}\bm{F}[0,\bm{\tau}^*]$ is a well defined linear bounded operator mapping $\mathfrak{Y}$ to $\mathfrak{W} $ follows from the fact that $ A(\bm{\tau},\bm{k})\hat{\bm{\eta}}(\bm{k})\sim \vert \bm{k}\vert^2\hat{\bm{\eta}}(\bm{k})$ for large $ \vert\bm{k}\vert $. Moreover, by part \textit{(i)} and \textit{(iii)} of \Cref{assumption:first} the kernel is two dimensional and given by $\hat{\mathcal{X}}=\mathcal{X}_1\oplus \mathcal{X}_2 $ as defined above and the cokernel is also two dimensional and given by $\hat{\mathcal{Y}}=\mathcal{Y}_1\oplus \mathcal{Y}_2 $. Hence it is a Fredholm operator and condition \textit{(ii)} of \Cref{thm:bif} is satisfied. Condition \textit{(iii)} of \Cref{thm:bif} is exactly the same as part \textit{(ii)} of \Cref{assumption:first}. Finally, condition \textit{(iv)} of \Cref{thm:bif} is satisfied if we pick $\tilde{\mathcal{X}}_i$ and $\tilde{\mathcal{Y}}_i$ such that $\hat{\mathcal{X}}_{i}\oplus \tilde{\mathcal{X}}_i $ and $ \hat{\mathcal{Y}}_{i}\oplus \tilde{\mathcal{Y}}_i $ are the subspaces of functions that are constant in the $ \bm{\lambda}_l $ direction for $ i\neq l $. 
\end{proof}
\section{On \Cref{assumption:first}}\label{sec:assumption}
This section is dedicated to show that \Cref{assumption:first} is satisfied in large parts of the parameter space of the problem. To this end we note that, due to fact that $ A(\bm{\tau},\bm{k}) $ is tridiagonal, it is sufficient to find $ \bm{\tau}^* $ such that $ \det A(\bm{\tau}^*,\bm{k}_1)=\det A(\bm{\tau}^*,\bm{k}_2)=0 $ and check that the sub- and superdiagonal consist of nonzero elements for part $ (i) $ of $ \Cref{assumption:first} $. Then to check parts $ (ii) $ and $ (iii) $ it is sufficient to check that a finite number of quantities are nonzero. This is due to the fact that for large $ \vert \bm{k}\vert $ the terms $ \sigma_j\vert \bm{k}\vert^2 $ are dominant so $ \det A(\bm{\tau}^*,\bm{k})\neq 0 $ for all sufficiently large $ \vert \bm{k}\vert $.

The results here are by no means exhaustive, since the parameter space is very large and a complete characterization of the subset where \Cref{assumption:first} is satisfied may not even be possible to do in a simple way. We focus on giving results with relatively simple assumptions in this section, because it is not difficult to check numerically if \Cref{assumption:first} is satisfied for a given set of parameter values. Thus a characterization of the the set where \Cref{assumption:first} is satisfied in terms of equally complicated algebraic expressions would be of little use. 

For this section we also make the following definition
\begin{definition}
	We call a lattice $ \Lambda\subset \mathbb{R}^2$ a \emph{symmetric lattice} if its generators $ \bm{\lambda}_1 $ and $ \bm{\lambda}_2 $ satisfy
	\[ 
	\vert \bm{\lambda}_1\vert =\vert \bm{\lambda}_2\vert.
	\]
	Moreover, we call it \emph{non-degenerate} if
	\[ 
	\vert \bm{\lambda}\vert \neq \vert \bm{\lambda}_1\vert\qquad\text{for all } \bm{\lambda}\in \Lambda\setminus\{\pm\bm{\lambda}_1,\pm\bm{\lambda}_2\}.
	\]
\end{definition}
The conditions on the lattice are exclusively required for part $ (iii) $ of \cref{assumption:first}. Even under this assumption we may still have to redefine $ \bm{\sigma} $ in some arbitrarily small neighborhood. However, it should be noted that it is plausible part $ (iii) $ of \cref{assumption:first} is true for a general lattice, $ \bm{\tau}^* $ such that parts $ (i) $ and $ (ii) $ of \Cref{assumption:first} is satisfied, and almost all parameter values. As stated above, part $ (iii) $ of \Cref{assumption:first} is equivalent to a finite subset of $ \Lambda' $ does not solve $ \det A(\bm{\tau}^*,\bm{k})=0 $. For any given set of parameters it is unlikely that any of these $ \bm{k} $ solve $ \det A(\bm{\tau}^*,\bm{k})=0 $. However, we can expect this to happen for some set of particular parameter values. Although in general we expect this set to be very small.
\subsection{The case $n=1$}
The case $ n=1 $ and $ \rho_2=0 $, was considered in \cite{Lokharu_2020}. The case when $ \rho_2>0 $ can be treated similarly. To make this paper more self-contained we give an adapted result below; prescribing sufficient conditions for \cref{assumption:first} to be satisfied in the latter case. For the former case and a more detailed discussion we refer to \cite{Lokharu_2020}. When $ n=1 $ the matrix $ A(\bm{k},\bm{\tau}) $ is simply a scalar and if, in addition, $ \rho_2>0 $ it is given by
\begin{align*}
	A(\bm{k},\bm{\tau})&=r^2\beta_{1}(\bm{k})\Big(-\beta_{1}(\bm{k})[\rho_2\psi_{2}'(d_{2},\bm{k})+\rho_1\psi_{1}'(d_1,\bm{k})]
	+\beta_{1}^\perp(\bm{k})[\rho_1\alpha_1-\rho_2\alpha_{2}]\Big)\\
	&\qquad+(\rho_{1}-\rho_{2})g+\sigma_{1}\vert\bm{k}\vert^2.
\end{align*}
For the following result we shall assume that $ \gamma_1=0 $ and $ 0<\gamma_2<\pi $. This can be done without loss of generality because we can always rotate the coordinate system and change $ \bm{k}_2 $ to $ -\bm{k}_2 $ if necessary.
This equation can be analyzed as a hyperbola in the variables $ x=r\beta_1(\bm{k}) $ and $ y=r\beta_1^\perp(\bm{k}) $, and we obtain the result below.
\begin{proposition}[\cite{Lokharu_2020}]
\label{thm:existsn1}
If
\begin{itemize}
	\item[(i)] $ \rho_2\psi_{2}'(d_{2},\bm{k}_1)+\rho_1\psi_{1}'(d_1,\bm{k}_1)\geq \rho_2\psi_{2}'(d_{2},\bm{k}_2)+\rho_1\psi_{1}'(d_1,\bm{k}_2)>0 $, and $ \rho_1\alpha_1-\rho_2\alpha_{2}>0 $;
	\item[(ii)] $ \arctan\left(\frac{\rho_2\psi_{2}'(d_{2},\bm{k}_1)+\rho_1\psi_{1}'(d_1,\bm{k}_1)}{\rho_1\alpha_1-\rho_2\alpha_{2}}\right)-\arctan\left(\frac{\rho_2\psi_{2}'(d_{2},\bm{k}_2)+\rho_1\psi_{1}'(d_1,\bm{k}_2)}{\rho_1\alpha_1-\rho_2\alpha_{2}}\right)<\gamma_2$;
\end{itemize}
then there exist $ \bm{\tau}^* $ such that part $ (i) $ and $ (ii) $ of $ \cref{assumption:first} $ are satisfied;
if, additionally,
\begin{itemize}
	\item[(iii)] $ \Lambda' $ is a symmetric lattice;
\end{itemize}
then part $ (iii) $ of \cref{assumption:first} is also satisfied for $ \bm{\tau}^* $ and almost every $ \sigma_1 $.
\end{proposition}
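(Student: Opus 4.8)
The strategy is to verify parts $(i)$, $(ii)$, $(iii)$ of \Cref{assumption:first} separately for the scalar $A(\bm k,\bm\tau)$, using the hyperbola description in the variables $x=r\beta_1(\bm k)$ and $y=r\beta_1^\perp(\bm k)$. Since $n=1$, part $(i)$ of \Cref{assumption:first} reduces to the single scalar equation $A(\bm\tau^*,\bm k_1)=A(\bm\tau^*,\bm k_2)=0$ (there are no sub- or superdiagonal entries to worry about). With $\gamma_1=0$, so that $\bm k_1=k_1(1,0)$, we have $\beta_1(\bm k_1)=\cos(\theta_1-\alpha_1 d_1)$ and $\beta_1^\perp(\bm k_1)=\sin(\theta_1-\alpha_1 d_1)$; more generally for $\bm k=\bm k_i$ the pair $(x_i,y_i)=(r\beta_1(\bm k_i),r\beta_1^\perp(\bm k_i))$ traces, as $\bm\tau=(r,\theta)$ varies over $\mathfrak Z$, the set $\{(x,y):x^2+y^2\le?\}$ — more precisely, since $\beta_1(\bm k_i)=\cos(\theta-\alpha_1 d_1-\gamma_i)$ and $\beta_1^\perp(\bm k_i)=\sin(\theta-\alpha_1 d_1-\gamma_i)$, as $\theta$ runs over $\mathbb R/2\pi\mathbb Z$ the point $(x_i,y_i)$ runs over the circle of radius $r$ rotated by the fixed offset $-\gamma_i$. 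Writing $\omega\coloneqq\theta-\alpha_1 d_1$, the two conditions become
\[
r^2\cos(\omega-\gamma_i)\bigl(-\cos(\omega-\gamma_i)\,S_i+\sin(\omega-\gamma_i)\,T\bigr)+(\rho_1-\rho_2)g+\sigma_1\lvert\bm k_i\rvert^2=0,\qquad i=1,2,
\]
where $S_i\coloneqq\rho_2\psi_2'(d_2,\bm k_i)+\rho_1\psi_1'(d_1,\bm k_i)>0$ by hypothesis $(i)$ and $T\coloneqq\rho_1\alpha_1-\rho_2\alpha_2>0$ by hypothesis $(i)$. Each of these is the equation of a branch of a hyperbola in the $(x,y)$-plane, and $(x_1,y_1)$, $(x_2,y_2)$ lie on a common circle of radius $r$ centred at the origin but at angular positions differing by the fixed amount $\gamma_1-\gamma_2=-\gamma_2$.

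\textbf{Verifying $(i)$.} First I would rewrite the $i$-th equation as $\tfrac12 r^2\bigl(T\sin(2(\omega-\gamma_i)) - S_i(1+\cos(2(\omega-\gamma_i)))\bigr) = -(\rho_1-\rho_2)g-\sigma_1\lvert\bm k_i\rvert^2$, i.e. $r^2\sqrt{S_i^2+T^2}\,\cos\bigl(2(\omega-\gamma_i)+\varphi_i\bigr) = 2(\rho_1-\rho_2)g+2\sigma_1\lvert\bm k_i\rvert^2 + r^2 S_i$ for a suitable phase $\varphi_i=\arctan(S_i/T)$ (here $T>0$ makes $\varphi_i\in(0,\pi/2)$ well defined). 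The geometric content of hypotheses $(i)$ and $(ii)$ is precisely that one can choose $r$ large and $\omega$ so that the hyperbola branch through $(x_1,y_1)$ and the hyperbola branch through $(x_2,y_2)$ are simultaneously met: the angular condition $(ii)$, $\varphi_1-\varphi_2<\gamma_2$ with $\varphi_i=\arctan(S_i/T)$, guarantees the two required angular positions $\omega$ (solving the two scalar equations for a common $r$) are compatible. Concretely I would fix $r$ large, treat each scalar equation as determining $\omega$ modulo the symmetry, and use a continuity/intermediate-value argument in $r$ together with $(ii)$ to find $(r,\theta)$ solving both; the monotonicity $S_1\ge S_2$ is what makes this argument go through. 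Part $(i)$ of \Cref{assumption:first} (simplicity of the kernel) is then automatic because $A$ is scalar and nonzero kernel means $A=0$.

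\textbf{Verifying $(ii)$.} Since $n=1$, the matrix $(\nu_{i,l})$ is the $2\times2$ Jacobian of $(\bm\tau\mapsto(A(\bm\tau,\bm k_1),A(\bm\tau,\bm k_2)))$ at $\bm\tau^*$ — that is, invertibility of $(\nu_{i,l})$ is exactly transversality of the two curves $\{A(\cdot,\bm k_1)=0\}$ and $\{A(\cdot,\bm k_2)=0\}$ in $\mathfrak Z$ at their intersection point $\bm\tau^*$. I would compute $\partial_r A$ and $\partial_\theta A$ explicitly from the displayed formula for $A$ and show the resulting $2\times2$ determinant is nonzero; because the two equations correspond to distinct angular offsets $\gamma_1\ne\gamma_2$ (mod $\pi$), the gradients are not parallel for generic $r$ — indeed $\partial_\theta A$ picks out the direction tangent to the circle of radius $r$ at angle $\omega-\gamma_i$, and these two tangent directions differ precisely when $\gamma_1\ne\gamma_2$. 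If the transversality happens to degenerate at the particular $r$ produced in the previous step, one slightly perturbs $r$ (hence $\sigma_1$ or $g$) to restore it; this is the source of the ``almost every'' qualifier later, but in fact $(ii)$ holds for the $\bm\tau^*$ already found under $(i)$--$(ii)$ of the proposition.

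\textbf{Verifying $(iii)$ and the role of the symmetric non-degenerate lattice.} Part $(iii)$ of \Cref{assumption:first} requires $\det A(\bm\tau^*,\bm k)=A(\bm\tau^*,\bm k)\ne0$ for every $\bm k\in\Lambda'$ with $\bm k\ne\pm\bm k_1,\pm\bm k_2$. For $\lvert\bm k\rvert$ large the term $\sigma_1\lvert\bm k\rvert^2$ dominates and the other terms are $O(r^2)$ uniformly (the $\psi_j'$ and $\beta_1,\beta_1^\perp$ factors are bounded, using that $\psi_j'(d_j,\bm k)\to -\lvert\bm k\rvert$ for large $\lvert\bm k\rvert$ contributes to the same dominant term with a favourable sign, or one simply notes $\sigma_1\lvert\bm k\rvert^2$ eventually beats everything), so only finitely many $\bm k\in\Lambda'$ can possibly satisfy $A(\bm\tau^*,\bm k)=0$. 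For those finitely many $\bm k$, the value $A(\bm\tau^*,\bm k)$ is an affine function of $\sigma_1$ with slope $\lvert\bm k\rvert^2\ne0$, hence it vanishes for at most one value of $\sigma_1$; since $\bm\tau^*$ itself depends on $\sigma_1$, I would instead argue that the exceptional set of $\sigma_1$ is contained in a finite union of zero sets of real-analytic (non-identically-zero) functions of $\sigma_1$ and is therefore of measure zero — using here that on a symmetric lattice $\lvert\bm k_1\rvert=\lvert\bm k_2\rvert$, so $\bm k_1$ and $\bm k_2$ satisfy the same scalar equation $A=0$ and the construction in the previous steps is consistent, and that non-degeneracy ensures no other $\bm k\in\Lambda'$ has $\lvert\bm k\rvert=\lvert\bm k_1\rvert$ so that the finitely many bad $\bm k$ genuinely give nontrivial constraints on $\sigma_1$.

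\textbf{Main obstacle.} The delicate point is the simultaneous solvability in Step~$(i)$: one must extract a single $(r,\theta)$ making two scalar hyperbola equations hold at once, and the only leverage is the angular separation $\gamma_1-\gamma_2$ together with the monotonicity hypothesis $S_1\ge S_2$. Making the intermediate-value/continuity argument rigorous — tracking how the solution angle $\omega$ of each equation moves as $r$ increases and showing the two admissible $\omega$-windows overlap exactly when $(ii)$ holds — is the crux; everything else is bookkeeping (explicit derivatives for $(ii)$, asymptotics in $\lvert\bm k\rvert$ and a measure-zero exceptional set for $(iii)$). Full details for $\rho_2=0$ are in \cite{Lokharu_2020}, and the case $\rho_2>0$ differs only in that $\psi_2'(d_2,\bm k)$ and $\rho_2\alpha_2$ now appear in $S_i$ and $T$, which does not affect the structure of the argument.
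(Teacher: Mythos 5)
The paper itself offers no proof of this proposition: it is imported from \cite{Lokharu_2020} with only the remark that the scalar equation $A(\bm{k},\bm{\tau})=0$ "can be analyzed as a hyperbola in the variables $x=r\beta_1(\bm{k})$ and $y=r\beta_1^\perp(\bm{k})$", and your proposal follows exactly that route, correctly identifying $n=1$ part $(i)$ of \Cref{assumption:first} as the two scalar conditions, part $(ii)$ as transversality of the two zero curves in $(r,\theta)$, the role of hypothesis $(ii)$ via the phases $\arctan(S_i/T)$, and the large-$|\bm{k}|$ dominance of $\sigma_1|\bm{k}|^2$ for part $(iii)$ — so it is the same approach at the same (sketch) level of detail, with the crux deferred to \cite{Lokharu_2020} just as the paper does. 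One minor slip: after the double-angle rewrite the right-hand side should be $r^2S_i-2(\rho_1-\rho_2)g-2\sigma_1|\bm{k}_i|^2$ (up to an overall sign and phase convention), not $2(\rho_1-\rho_2)g+2\sigma_1|\bm{k}_i|^2+r^2S_i$, though this does not affect the large-$r$ solvability discussion.
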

\begin{remark}
	The first condition is more or less a matter of proper labeling, but the second condition requires us to be in this setting. We can also replace the conditions $ (i) $ and $ (ii) $, by $ \rho_1\alpha_1=\rho_2\alpha_2 $. See \cite{Lokharu_2020} for further details. 
\end{remark}
This proposition can be combined with \Cref{thm:exist} to obtain solutions under the assumptions given in the proposition.
\subsection{Irrotational and small vorticity}
In the previous section we only considered the case $ n=1 $. Here we also allow any $ n\geq 1 $. Unfortunately, lifting the restriction $ n=1 $ leads to a substantial increase of complexity and we have to impose some other restrictions; specifically small vorticity. The result of this section is the second main theorem of this paper and gives an existence result without supposing \Cref{assumption:first} holds. Instead more direct assumptions on the parameters are given, which in turn imply that \Cref{assumption:first} holds. This gives us the second and more concrete version of the main result. However, before we state this theorem we make the following observation to introduce some necessary notation. If $\bm{\alpha}=0$, then $\beta_j(\bm{k})=\cos(\theta-\gamma)$, $1\leq j\leq m$, so we can write
\begin{equation}\label{eq:AnoVort}  
A(\bm{\tau},\bm{k})=\Sigma(k)+r^2\cos^2(\theta-\gamma)B(k),
\end{equation}
where $ \Sigma(k) $ is a diagonal matrix with elements given by
\[ 
(\Sigma(k))_{j,j}=\sigma_jk^2+(\rho_j-\rho_{j+1})g>0.
\]
This allows us to state the main result of this section.
\begin{theorem}\label{thm:existssmallvort} Assume that $ \Lambda' $ is a non-degenerate symmetric lattice, the non-resonance condition in \cref{eq:no-resonance} holds, and that $\vert \bm{\alpha}\vert\ll 1 $. 
	Then, after possibly redefining $ \bm{\sigma} $ in an arbitrarily small neighborhood, there exist $ n^2 $ parameter values $ \bm{\tau}_{\iota,\kappa}^* \in \mathfrak{Z}$, $ 1\leq \iota,\kappa\leq n $, and an $ \epsilon $ such that for every $\bm{t}=(t_1,t_2)\in B_{\epsilon}(0)$ there exist parameters $ \bm{\tau}_{\iota.\kappa}(\bm{t})\in\mathfrak{Z} $ and corresponding solutions $\bm{v}_{\iota,\kappa}(\bm{t})\in \mathfrak{X}$, and $\bm{\eta}_{\iota,\kappa}(\bm{t})\in \mathfrak{Y}$ to \crefrange{eq:curl_flat}{eq:dyn_flat} such that the map
	\[
	\bm{t}\mapsto (\bm{\tau}(\bm{t}),\bm{v}_{\iota,\kappa}(\bm{t}),\bm{\eta}(\bm{t}))
	\]
	is real analytic and
	\[
	\bm{\tau}_{\iota,\kappa}=\bm{\tau}^*_{\iota,\kappa}+\mathcal{O}(\vert \bm{t}\vert),\qquad \bm{\eta}_{\iota,\kappa}=t_1\hat{\bm{\eta}}_{\iota}(\bm{k}_1)\cos(\bm{k}_1\cdot \bm{x}')+t_2\hat{\bm{\eta}}_{\kappa}(\bm{k}_2)\cos(\bm{k}_2\cdot\bm{x}')+\mathcal{O}(\vert \bm{t}\vert^2).
	\]
	Moreover, the vectors $ \hat{\bm{\eta}}_\iota(\bm{k}_1) $ and $ \hat{\bm{\eta}}_\kappa(\bm{k}_2) $ that span $ \ker{A}(\bm{k}_1,\bm{\tau}_{\iota,\kappa}) $ and $ \ker{A}(\bm{k}_1,\bm{\tau}_{\iota,\kappa}) $ respectively, are also given by
	\[ 
	\hat{\bm{\eta}}_\iota(\bm{k}_1)=\Sigma^{-1/2}(k_1)\bm{\xi}_\iota(k_1)+\mathcal{O}(\vert\bm{\alpha}\vert),\qquad \hat{\bm{\eta}}_\kappa(\bm{k}_2)=\Sigma^{-1/2}(k_2)\bm{\xi}_\kappa(k_2)+\mathcal{O}(\vert\bm{\alpha}\vert),
	\]
	where $ \bm{\xi}_{\iota}(k_1) $ and $ \bm{\xi}_{\kappa}(k_2) $ are eigenvectors of $ \Sigma^{-1/2}(k_1)B(k_1)\Sigma^{-1/2}(k_1) $ and\\ $ \Sigma^{-1/2}(k_2)B(k_2)\Sigma^{-1/2}(k_2) $ with corresponding eigenvalues $ \mu_{\iota}(\bm{k}_1) $, $ \mu_{\kappa}(\bm{k}_2) $ satisfying
	\begin{align*}
	\mu_{\iota}(\bm{k}_1)=-\frac{1}{r_{\iota,\kappa}^2\cos^2(\theta_{\iota,\kappa}-\gamma_1)},\qquad
	\mu_{\kappa}(\bm{k}_2)=-\frac{1}{r_{\iota,\kappa}^2\cos^2(\theta_{\iota,\kappa}-\gamma_2)}.
	\end{align*}
\end{theorem}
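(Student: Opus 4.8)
The plan is to reduce Theorem~\ref{thm:existssmallvort} to Theorem~\ref{thm:exist} by verifying \Cref{assumption:first} for a carefully chosen family of parameter values $\bm{\tau}_{\iota,\kappa}^*$, using a perturbation argument around the irrotational case $\bm{\alpha}=0$. First I would treat the decomposition \eqref{eq:AnoVort} as the leading order: at $\bm{\alpha}=0$ the matrix $A(\bm{\tau},\bm{k})$ is $\Sigma(k)+r^2\cos^2(\theta-\gamma)B(k)$, and conjugating by $\Sigma^{-1/2}(k)$ turns the condition $\det A(\bm{\tau},\bm{k}_i)=0$ into an eigenvalue problem for the symmetric matrix $\Sigma^{-1/2}(k_i)B(k_i)\Sigma^{-1/2}(k_i)$. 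Since $B(k)$ has rank one (it comes from the single vector $\bm{\hat\eta}\mapsto$ the $\beta_j$-weighted contraction), I need to argue that this conjugated matrix nonetheless has $n$ distinct, simple, nonzero eigenvalues $\mu_\iota(\bm{k}_1)$ — here I expect a rank-one-plus-structure argument or a genericity/perturbation-in-$\bm{d}$ remark is needed, and the $\psi_j'$ terms hidden in $B$ must be shown not to vanish identically. For each choice of index pair $(\iota,\kappa)$ one then solves
\[
\mu_\iota(\bm{k}_1)=-\frac{1}{r^2\cos^2(\theta-\gamma_1)},\qquad
\mu_\kappa(\bm{k}_2)=-\frac{1}{r^2\cos^2(\theta-\gamma_2)},
\]
for $\bm{\tau}^*=(r,\theta)$; dividing the two equations eliminates $r$ and leaves a single trigonometric equation for $\theta$, solvable (with $r>0$ recovered afterwards) precisely when the $\mu$'s have a common sign, which one ensures by relabeling and by the $\gamma_1\neq\gamma_2$ hypothesis built into the symmetric lattice. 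This produces the $n^2$ candidate points $\bm{\tau}_{\iota,\kappa}^*$ and the eigenvectors $\bm{\xi}_\iota,\bm{\xi}_\kappa$, hence the stated form of $\hat{\bm{\eta}}_\iota(\bm{k}_1)$, $\hat{\bm{\eta}}_\kappa(\bm{k}_2)$ at $\bm{\alpha}=0$.

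**Carrying the construction to small $\bm{\alpha}$.** With a simple eigenvalue at $\bm{\alpha}=0$ the map $(\bm{\tau},\bm{\alpha})\mapsto \det A(\bm{\tau},\bm{k}_i)$ is analytic, the eigenvalue and eigenvector perturb analytically, and the $2\times2$ system above becomes a small analytic perturbation of its $\bm{\alpha}=0$ version. I would apply the implicit function theorem to solve $\det A(\bm{\tau},\bm{k}_1)=\det A(\bm{\tau},\bm{k}_2)=0$ for $\bm{\tau}_{\iota,\kappa}(\bm{\alpha})$ near $\bm{\tau}_{\iota,\kappa}^*$, provided the relevant Jacobian is nonsingular at $\bm{\alpha}=0$ — and this Jacobian is, up to a nonzero factor, exactly the $2\times2$ matrix $(\nu_{i,l})$ of part~(ii) of \Cref{assumption:first}, so checking its invertibility at $\bm{\alpha}=0$ simultaneously discharges part~(ii). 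Concretely, $\nu_{i,1}=\bm{\hat\eta}(\bm{k}_i)\cdot\partial_r A\,\bm{\hat\eta}(\bm{k}_i)$ and $\nu_{i,2}=\bm{\hat\eta}(\bm{k}_i)\cdot\partial_\theta A\,\bm{\hat\eta}(\bm{k}_i)$ are computed at $\bm{\alpha}=0$ from \eqref{eq:AnoVort} to be $2r\cos^2(\theta-\gamma_i)\,\bm{\xi}\cdot B(k_i)\bm{\xi}$ and $-2r^2\cos(\theta-\gamma_i)\sin(\theta-\gamma_i)\,\bm{\xi}\cdot B(k_i)\bm{\xi}$ respectively, and the resulting $2\times2$ determinant is a nonzero multiple of $\sin(\gamma_1-\gamma_2)$, which is nonzero on a symmetric lattice with $\bm{k}_1\neq\pm\bm{k}_2$. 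Part~(i) of \Cref{assumption:first} — that the kernels are exactly one-dimensional — follows because simplicity of the eigenvalue is an open condition, and then the tridiagonal-symmetric structure from \cref{lemma:Aproperties} gives that the sub/superdiagonal entries are nonzero (one must check $a_{j,j+1}=\rho_{j+1}r^2\beta_j\beta_{j+1}\psi_{j+1}'(d_j,\bm{k})\neq0$, which holds for small $\bm{\alpha}$ at the chosen $\bm{\tau}^*$ since $\beta_j\to\cos(\theta^*-\gamma)\neq0$ and $\psi_{j+1}'(d_j,\bm{k})\neq0$).

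**Part~(iii) and the measure-zero exceptional set.** The remaining condition, $\det A(\bm{\tau}^*_{\iota,\kappa},\bm{k})\neq0$ for all $\bm{k}\in\Lambda'$ with $\bm{k}\neq\pm\bm{k}_i$, is where the non-degenerate symmetric lattice hypothesis and the freedom to redefine $\bm{\sigma}$ enter. As noted after \Cref{assumption:first}, for $|\bm{k}|$ large the diagonal $\sigma_j|\bm{k}|^2$ terms dominate, so $\det A\neq0$ automatically outside a finite set $S\subset\Lambda'$; by non-degeneracy, for $\bm{k}\in S$ we have $|\bm{k}|\neq|\bm{k}_1|=|\bm{k}_2|$, so such a $\bm{k}$ sees a genuinely different value of $\sigma_j|\bm{k}|^2$. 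For each such $\bm{k}$, the equation $\det A(\bm{\tau}^*_{\iota,\kappa},\bm{k})=0$ is a nontrivial polynomial (indeed essentially linear in each $\sigma_j$ through $\Sigma(|\bm{k}|)$) constraint on $\bm{\sigma}$, so its solution set is a proper analytic subvariety; taking the union over the finitely many $\bm{k}\in S$ and the finitely many pairs $(\iota,\kappa)$ still gives a closed set of measure zero, and choosing $\bm{\sigma}$ outside it — a possibly arbitrarily small perturbation of any prescribed $\bm{\sigma}$ — secures part~(iii). One subtlety I would be careful with: redefining $\bm{\sigma}$ also moves $\bm{\tau}^*_{\iota,\kappa}$ and the eigenvectors, so the argument must be organized as ``the bad set of $\bm{\sigma}$ is meagre'' with $\bm{\tau}^*$ viewed as a function of $\bm{\sigma}$ via the implicit function theorem of the previous paragraph, rather than fixing $\bm{\tau}^*$ first. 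The main obstacle, I expect, is precisely the interplay in part~(iii) between the analytic dependence $\bm{\sigma}\mapsto\bm{\tau}^*_{\iota,\kappa}(\bm{\sigma})$ and the need for $\det A(\bm{\tau}^*_{\iota,\kappa}(\bm{\sigma}),\bm{k})$ to be a non-identically-vanishing function of $\bm{\sigma}$ for each relevant $\bm{k}$ — establishing non-triviality of these constraints (so that their zero sets are genuinely measure zero) requires tracking how $\bm{\tau}^*$ varies and is the one place where the non-degeneracy of the lattice is essential. Once \Cref{assumption:first} is verified for each $\bm{\tau}^*_{\iota,\kappa}$, \Cref{thm:exist} applies verbatim and yields the asserted real-analytic families $\bm{t}\mapsto(\bm{\tau}_{\iota,\kappa}(\bm{t}),\bm{v}_{\iota,\kappa}(\bm{t}),\bm{\eta}_{\iota,\kappa}(\bm{t}))$ with the stated expansions, completing the proof.
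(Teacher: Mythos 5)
Your overall architecture matches the paper's: verify \Cref{assumption:first} at $\bm{\alpha}=0$ by diagonalizing $\Sigma^{-1/2}(k)B(k)\Sigma^{-1/2}(k)$, recover $(r,\theta)$ from the two eigenvalue equations, perturb to small $\vert\bm{\alpha}\vert$ by the implicit function theorem applied to $\det A(\bm{\tau},\bm{k}_1)=\det A(\bm{\tau},\bm{k}_2)=0$, fix part \textit{(iii)} by perturbing $\bm{\sigma}$, and then invoke \Cref{thm:exist}. However, there are two genuine gaps.

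First, your premise that $B(k)$ has rank one is false for $n\geq 2$: $B(k)$ is tridiagonal with strictly positive sub- and superdiagonal entries, hence of rank at least $n-1$, and the whole construction hinges on the stronger fact that $B(k)$ is \emph{negative definite}. You flag that you ``need to argue'' the conjugated matrix has $n$ simple nonzero eigenvalues and suggest a rank-one-plus-structure or genericity-in-$\bm{d}$ argument, but neither supplies what is actually required. Negativity of every $\mu_\iota$ cannot be arranged ``by relabeling'': the equation $r^2\cos^2(\theta-\gamma_1)=-1/\mu_\iota(k_1)$ has a real solution $r$ only if $\mu_\iota(k_1)<0$, so a ``common sign'' is not enough and a genericity argument cannot produce a sign. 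The paper's \Cref{lemma:ApropertiesIrrotational} establishes negative definiteness by splitting $B=\hat{B}+\check{B}$ into two tridiagonal pieces whose leading principal minors are explicitly computable and of the right sign (with a separate semi-definiteness argument when $\rho_{n+1}=0$); simplicity of the eigenvalues then follows deterministically from the nonvanishing off-diagonal entries via rank--nullity, not from a perturbation of $\bm{d}$. Without this lemma your $n^2$ points $\bm{\tau}^*_{\iota,\kappa}$ do not exist.

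Second, for part \textit{(iii)} you correctly identify the obstruction — that $\det A(\bm{\tau}^*(\bm{\sigma}),\bm{k}_3)$ might vanish identically along the curve $\bm{\sigma}\mapsto\bm{\tau}^*(\bm{\sigma})$ — but you do not resolve it, and a bare ``proper analytic subvariety'' claim is unjustified without exhibiting at least one $\bm{\sigma}$ where the constraint is nonzero. The paper's \Cref{prop:As(III)Irrotational} circumvents the issue with an explicit one-parameter family $\bm{\sigma}_q$, $\bm{\tau}^*_q=((1+q)^{1/2}r^*,\theta^*)$ chosen so that $A_q(\bm{\tau}^*_q,\bm{k}_i)=(1+q)A_0(\bm{\tau}^*_0,\bm{k}_i)$ exactly (so the kernel conditions at $\bm{k}_1,\bm{k}_2$ persist without re-solving for $\bm{\tau}^*$), while for any offending $\bm{k}_3$ the determinant becomes a polynomial in $q$ whose value at $q=-1$ is $(1-k_3^2/k_1^2)^n$ times a nonzero determinant — this is where non-degeneracy of the lattice enters, and it is precisely the non-triviality certificate your argument is missing.
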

\begin{remark}
	This theorem means that there exist $ n^2 $ different nontrivial solutions to \crefrange{eq:curl}{eq:int_cond}. In fact we expect there to exist more than $n^2$ values of $\bm{\tau}$ that yield solutions. However, these additional $\bm{\tau}$ values give rise to solutions that are qualitatively similar to the ones in the Theorem. For $ n=2 $ we illustrate and example of the two modes in \Cref{fig:n2modes} and the four different waves they can be combined to in \Cref{fig:n2waves}. It should also be noted that for $ \bm{\alpha}=0 $, the solution for $ (\iota,\kappa) $ is a reflection of the solution for $ (\kappa,\iota) $. Consider for example a square lattice with $ \bm{k}_1=(k,0) $ and $ \bm{k}_2=(0,2) $. Let
	\begin{align*} 
	S_j((u^{(j)}_1,u^{(j)}_2,u^{(j)}_3)(x,y,z))
	&=((u^{(j)}_2,u^{(j)}_1,u^{(j)}_3)(y,x,z)),\\
	\tilde{S}_j\eta_j(x,y)&=\eta_j(y,x),\\
	\intertext{and}
	\bm{S}(\bm{u},\bm{\eta})&=(S_1\bm{u}^{(1)},\ldots,S_m\bm{u}^{(m)},\tilde{S}_1\eta_1(x,y),\ldots,\tilde{S}_n\eta_n(x,y)),
	\end{align*}
	then $ \bm{S}(\bm{u}_{\iota,\kappa},\bm{\eta}_{\iota,\kappa})[t_1,t_2]=(\bm{u}_{\kappa,\iota},\bm{\eta}_{\kappa,\iota})[t_2,t_1] $. For $ \bm{\alpha}\neq 0 $ we still get a solution to the Euler equations under this reflection, but the reflected velocity fields, $ S_j\bm{u}^{(j)} $, are Beltrami fields with constants $-\alpha_j $; that is, if $ \nabla \times \bm{u}^{(j)}=\alpha_j \bm{u}^{(j)} $ then $ \nabla \times S_j\bm{u}^{(j)}=-\alpha_j S_j\bm{u}^{(j)} $.
\end{remark}
\begin{figure}[h!]
	\begin{centering}
		\begin{subfigure}{0.45\textwidth}
			\includegraphics[scale=0.55]{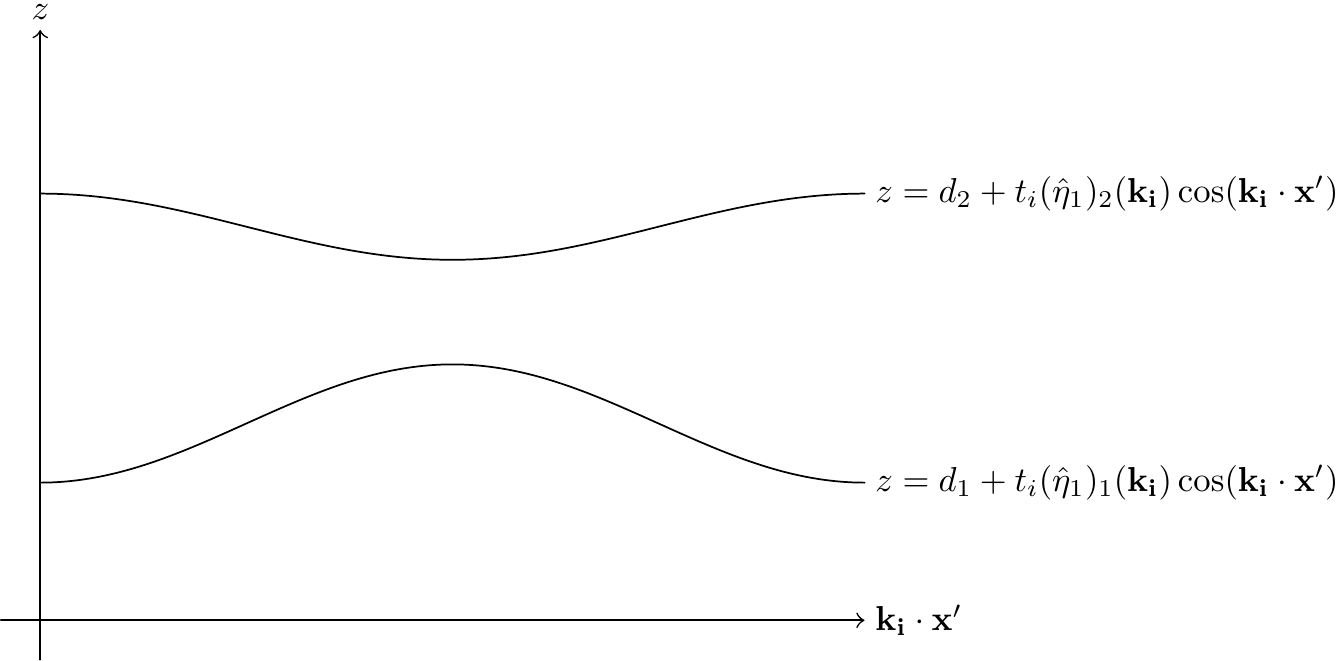}
			\caption{The mode given by $ \hat{\bm{\eta}}_1(\bm{k}_i)  $. The interfacial waves are out of phase.}
		\end{subfigure}
		\hfill
		\begin{subfigure}{0.45\textwidth}
			\includegraphics[scale=0.55]{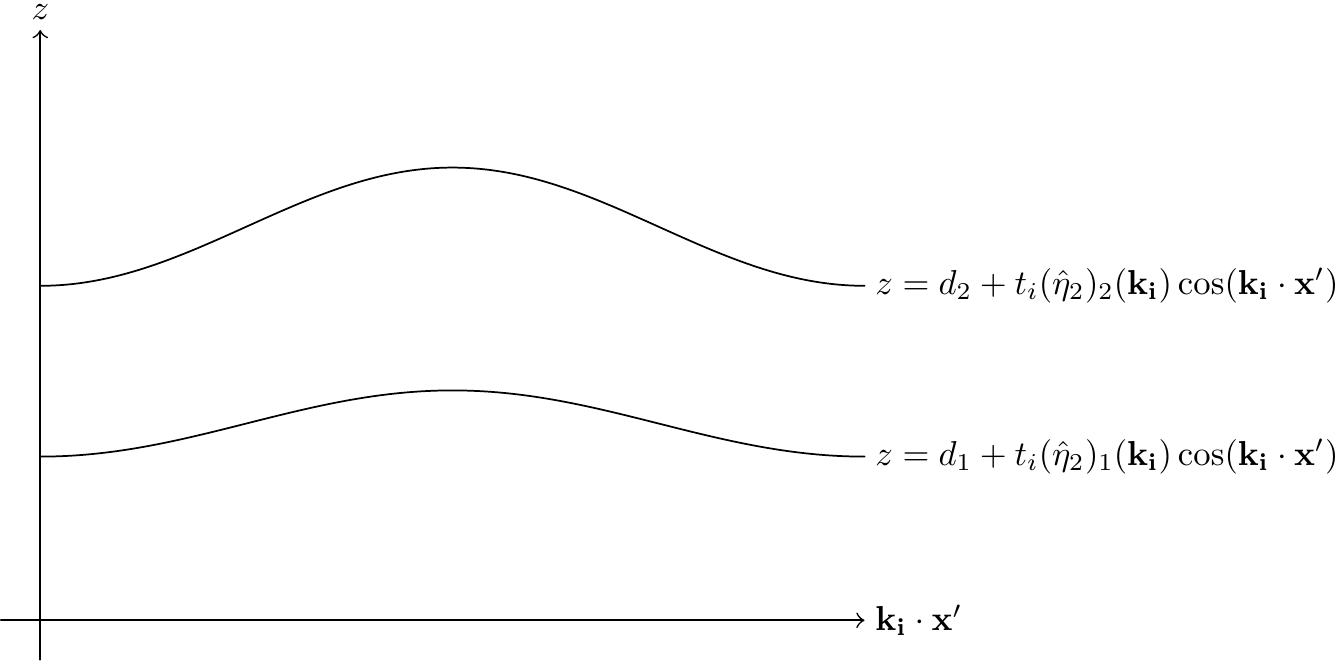}
			\caption{The mode given by $ \hat{\bm{\eta}}_2(\bm{k}_i)  $. The interfacial waves are in phase.}
		\end{subfigure}
	\end{centering}
	\caption{This figure shows an example of the two different modes that can be obtained for $n=2$. That is, a two-dimensional cross section of the domain highlighting $ \hat{\bm{\eta}}_1(\bm{k}_i) $ and $ \hat{\bm{\eta}}_2(\bm{k}_i) $ respectively, with the interfaces expanded to first order in $ \bm{t} $.}
	\label{fig:n2modes}
\end{figure}
\begin{figure}[h!]
	\begin{centering}
		\begin{subfigure}{0.45\textwidth}
			\includegraphics[scale=0.6]{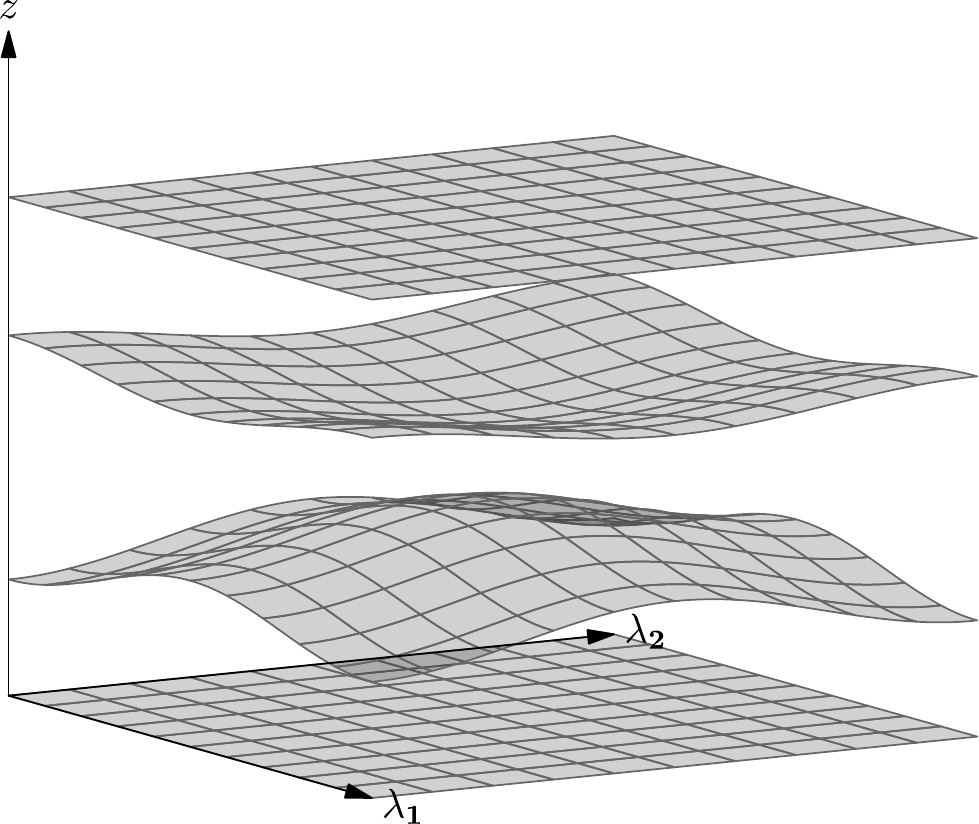}
			\caption{The $ \bm{\eta}_{1,1} $-waves. The interfacial waves are out of phase in both directions.}
		\end{subfigure}
		\hfill
		\begin{subfigure}{0.45\textwidth}
			\includegraphics[scale=0.6]{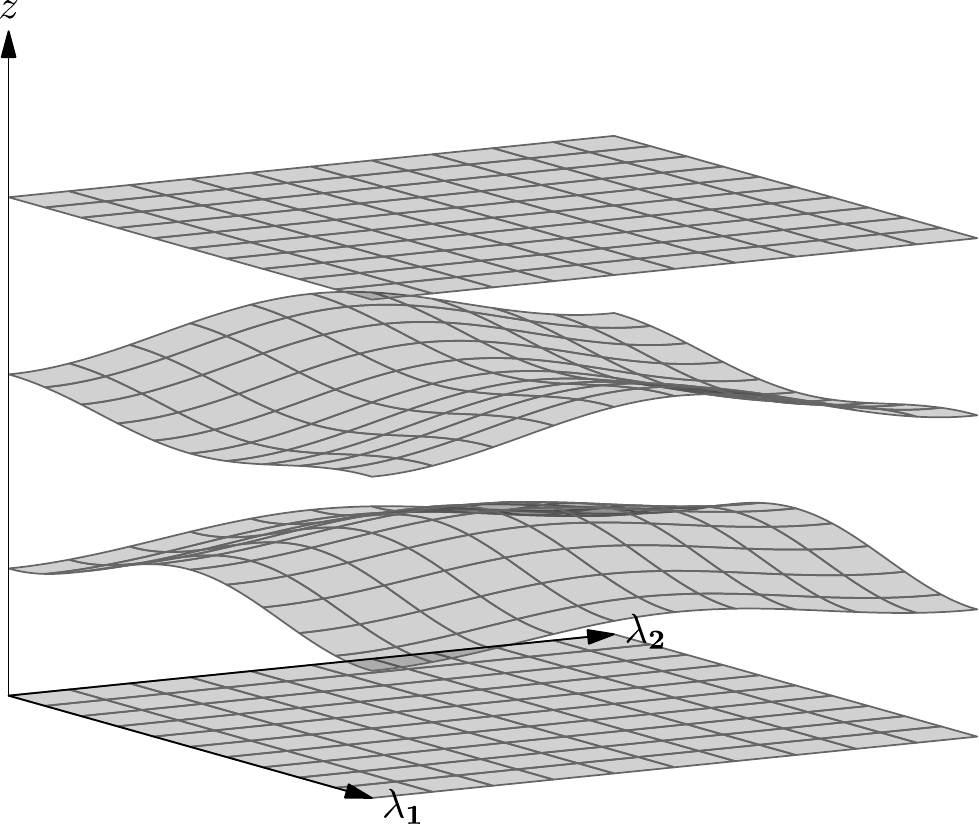}
			\caption{The $ \bm{\eta}_{1,2} $-waves. The interfacial waves are out of phase in the $ \bm{\lambda}_1 $-direction and in phase in the $ \bm{\lambda}_2$-direction.}
			\label{subfig:12-wave}
		\end{subfigure}
		\begin{subfigure}{0.45\textwidth}
			\includegraphics[scale=0.6]{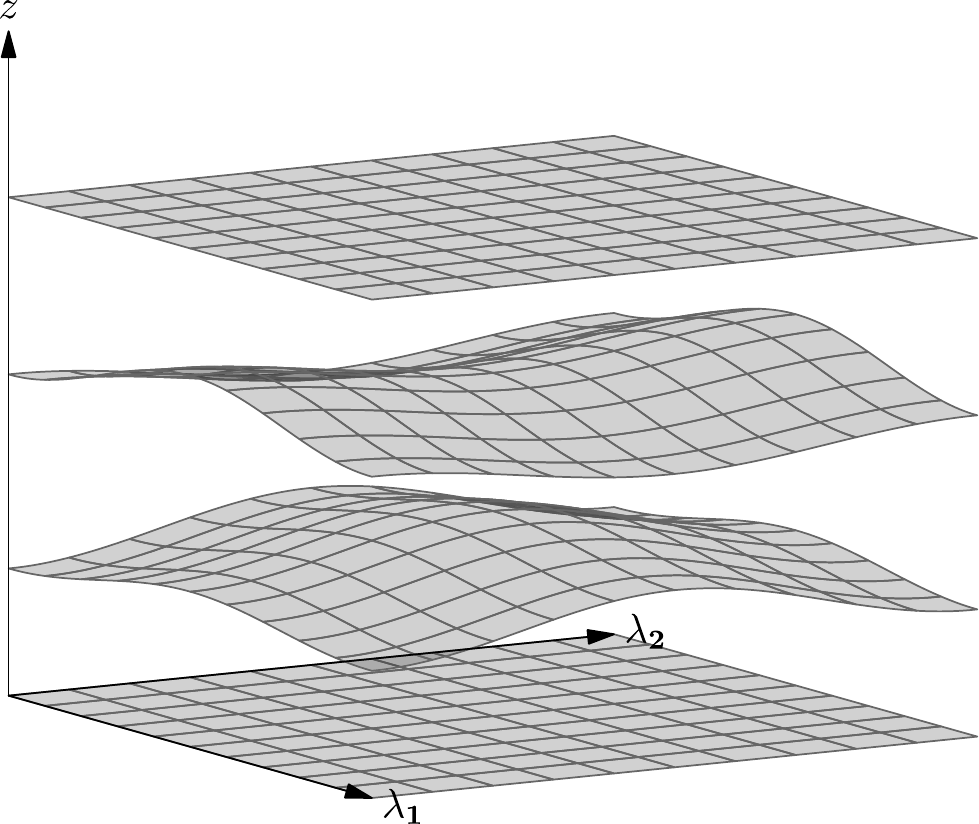}
			\caption{The $ \bm{\eta}_{2,1} $-waves. The interfacial waves are in phase in the $ \bm{\lambda}_1 $-direction and out of phase in the $ \bm{\lambda}_2$-direction.}
			\label{subfig:21-wave}
		\end{subfigure}
		\hfill
		\begin{subfigure}{0.45\textwidth}
			\includegraphics[scale=0.6]{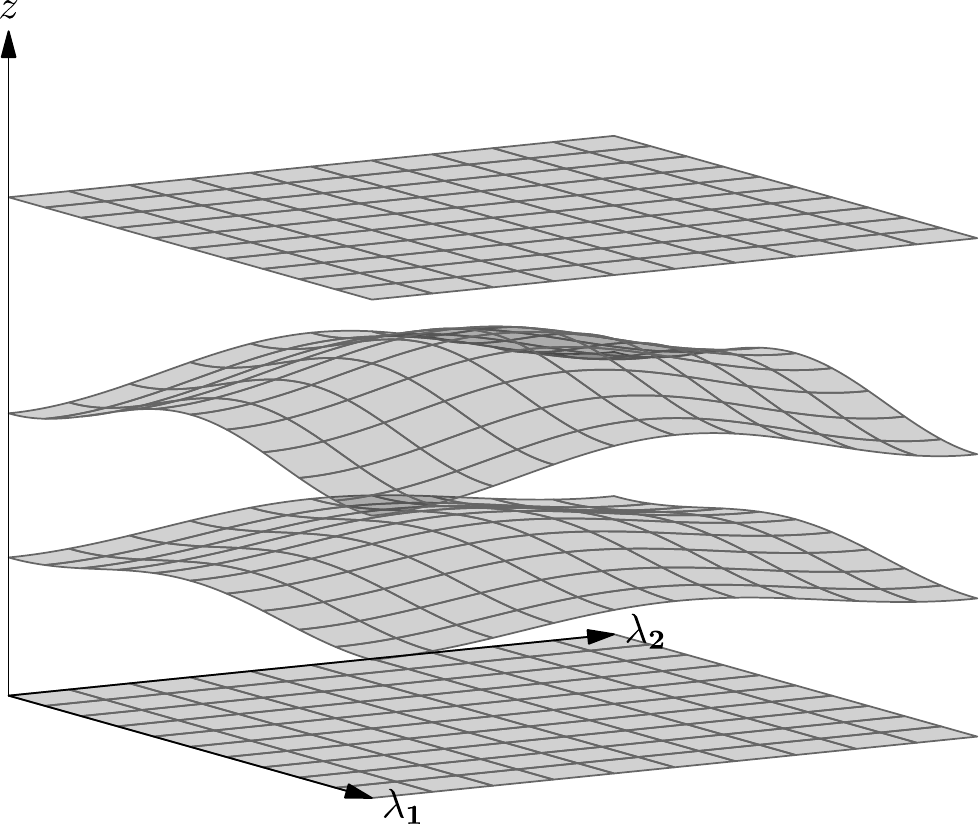}
			\caption{The $ \bm{\eta}_{2,2} $-waves. The interfacial waves are in phase in both directions.}
		\end{subfigure}
	\end{centering}
	\caption{This figure shows an example of the four different possible interfacial waves $ \bm{\eta}_{\iota,\kappa} $, for $ \iota=1,2 $ and $ \kappa=1,2 $, when $n=2$. The solutions are expanded to first order in $ \bm{t} $ and can be obtained by combining the modes in \Cref{fig:n2modes}. Note that while the $\bm{\eta}_{2,1}$-waves in \Cref{subfig:21-wave} can be obtained by the $\bm{\eta}_{1,2}$-waves in \Cref{subfig:12-wave} through a reflection, the reflected velocity field is different unless $ \bm{\alpha}=0 $.}
	\label{fig:n2waves}
\end{figure}
The rest of this section is dedicated to the proof of this theorem.
We begin working under the assumption $ \bm{\alpha}=0 $ and extend this to $ \vert\bm{\alpha}\vert\ll1 $ through an implicit function theorem argument. In the proofs we will also make use of submatrices and thus make the following definition.
	\begin{definition}
		For an $ n\times n $ matrix $ M $ we define $ M_{k} $ as the $ k\times k $ matrix obtained by removing the last $ n-k $ rows and the last $ n-k $ columns. We call $ M_k $ a \emph{leading principal submatrix} of $ M $ and $ \det M_{k} $ a \emph{leading principal minor} of $ M $.
	\end{definition}
Now we can prove the following properties of the matrix $ B(k) $.
	\begin{lemma}\label{lemma:ApropertiesIrrotational}
		If $ \bm{\alpha}=0 $ and $\bm{k}\neq 0$, then the matrix $ B(k) $ in \cref{eq:AnoVort} has negative elements on the diagonal, positive elements on the sub- and superdiagonal, and all eigenvalues of $ B(k) $ are negative.
	\end{lemma}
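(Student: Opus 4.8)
\emph{Proof idea.} The plan is to write down the entries of $B(k)$ explicitly and then show that $-B(k)$ is a real symmetric, strictly diagonally dominant matrix with strictly positive diagonal, so that Gershgorin's theorem forces all of its eigenvalues to be positive, i.e.\ all eigenvalues of $B(k)$ to be negative. Putting $\bm{\alpha}=0$ in \cref{eq:lin_prob_kn0} makes every $\beta_j(\bm{k})$ equal to $\cos(\theta-\gamma)$ and kills the term proportional to $\beta_j^\perp(\bm{k})$, so comparing with \cref{eq:AnoVort} identifies
\[
(B(k))_{j,j}=-\rho_{j+1}\psi_{j+1}'(d_{j+1},\bm{k})-\rho_j\psi_j'(d_j,\bm{k}),\qquad
(B(k))_{j,j-1}=\rho_j\psi_j'(d_{j-1},\bm{k}),\qquad
(B(k))_{j,j+1}=\rho_{j+1}\psi_{j+1}'(d_j,\bm{k}),
\]
where $\rho_{n+1}=0$ is understood when $m=n$; symmetry of $B(k)$ is inherited from \cref{lemma:Aproperties} (or checked directly). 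Since $|\bm{k}|>0=|\alpha_j|$, the explicit formula for $\psi_j$ reads $\psi_j(z,\bm{k})=\sinh(|\bm{k}|(z-d_{j-1}))/\sinh(|\bm{k}|\ell_j)$ with $\ell_j:=d_j-d_{j-1}>0$, whence $\psi_j'(d_j,\bm{k})=|\bm{k}|\coth(|\bm{k}|\ell_j)>0$ and $\psi_j'(d_{j-1},\bm{k})=|\bm{k}|/\sinh(|\bm{k}|\ell_j)>0$. Combined with $\rho_j>0$ for $1\le j\le n$ and $\rho_{n+1}\ge 0$, this gives at once the negativity of the diagonal entries and --- since on the off-diagonal the relevant index satisfies $j+1\le n$ --- the strict positivity of the sub- and superdiagonal entries.

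For the eigenvalue claim I would check strict diagonal dominance of $-B(k)$ one row at a time. For an interior row $j$ this is the inequality
\[
\rho_j\bigl(\psi_j'(d_j,\bm{k})-\psi_j'(d_{j-1},\bm{k})\bigr)+\rho_{j+1}\bigl(\psi_{j+1}'(d_{j+1},\bm{k})-\psi_{j+1}'(d_j,\bm{k})\bigr)>0 ,
\]
and using the formulas above, $\psi_j'(d_j,\bm{k})-\psi_j'(d_{j-1},\bm{k})=|\bm{k}|\bigl(\cosh(|\bm{k}|\ell_j)-1\bigr)/\sinh(|\bm{k}|\ell_j)>0$ because $|\bm{k}|\ell_j>0$ forces $\cosh(|\bm{k}|\ell_j)>1$, and likewise $\psi_{j+1}'(d_{j+1},\bm{k})-\psi_{j+1}'(d_j,\bm{k})>0$; already the first summand is strictly positive since $\rho_j>0$, so the case $\rho_{n+1}=0$ requires no separate argument. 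The boundary rows $j=1$ and $j=n$ are handled identically (dropping a neighbour only lowers the off-diagonal mass, while $\rho_1\psi_1'(d_1,\bm{k})>0$, resp.\ $\rho_n(\psi_n'(d_n,\bm{k})-\psi_n'(d_{n-1},\bm{k}))>0$, provides the slack), and this also subsumes $n=1$, where $B(k)$ is the single negative scalar $(B(k))_{1,1}$. Since $-B(k)$ is real symmetric with positive diagonal and strictly diagonally dominant, each of its Gershgorin discs lies strictly in the open right half-plane; symmetry then makes every eigenvalue a real number in $(0,\infty)$, i.e.\ $B(k)$ is negative definite.

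I do not expect a genuine obstacle here: the content is essentially the explicit computation of $\psi_j$ and its boundary derivatives. The only points needing a little care are treating the boundary rows $j=1,n$ and the two cases $\rho_{n+1}>0$, $\rho_{n+1}=0$ within one uniform estimate --- handled by always bounding $\rho_{n+1}\psi_{n+1}'(d_{n+1},\bm{k})\ge 0$ --- and noticing that the strictness in the diagonal-dominance inequality always comes from the layer-$j$ term $\rho_j(\psi_j'(d_j,\bm{k})-\psi_j'(d_{j-1},\bm{k}))$, which is present in every row; that strictness is precisely what allows Gershgorin to exclude the eigenvalue $0$.
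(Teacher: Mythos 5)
Your proposal is correct, and it takes a genuinely different route from the paper. You establish negative definiteness by showing that $-B(k)$ is real symmetric with strictly positive diagonal and strictly diagonally dominant in every row --- the slack coming uniformly from $\rho_j\bigl(\psi_j'(d_j,\bm{k})-\psi_j'(d_{j-1},\bm{k})\bigr)=\rho_j\vert\bm{k}\vert\bigl(\cosh(\vert\bm{k}\vert\ell_j)-1\bigr)/\sinh(\vert\bm{k}\vert\ell_j)>0$ --- and then invoke Gershgorin. The paper instead splits $B(k)=\hat{B}(k)+\check{B}(k)$ according to the parity of the row index, so that each summand is block diagonal with $2\times 2$ blocks, computes all leading principal minors explicitly (they collapse via $\coth^2-\operatorname{csch}^2=1$ to powers of $k$ times products of densities), and concludes negative definiteness of each summand by Sylvester's criterion before adding them; the case $\rho_{n+1}=0$ then needs a separate remark because one of the two summands degenerates to negative semi-definite. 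Your argument is more elementary, avoids the determinant computations entirely, and absorbs the $\rho_{n+1}=0$ case and the boundary rows $j=1,n$ into one uniform estimate, exactly as you note; what the paper's decomposition buys in exchange is the explicit values of the minors, which make the mechanism behind the definiteness (and its near-failure when $\rho_{n+1}=0$) visible. Both identify the entries of $B(k)$ and the signs $\psi_j'(d_{j-1},\bm{k})>0$, $\psi_j'(d_j,\bm{k})>0$ in the same way, so the first two claims of the lemma are proved identically.
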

\begin{proof}
	If $ \bm{\alpha}=0 $ then $ \beta_j=\cos(\theta-\gamma) $, which means we can write
	\[ 
		A(\bm{\tau},\bm{k})=\Sigma(k)+r^2\cos^2(\theta-\gamma)B(k).
	\]
	$ \Sigma(k) $ is a diagonal matrix with diagonal elements given by
	\[ 
		(\Sigma(k))_{j,j}=\sigma_jk^2+(\rho_j-\rho_{j+1})g>0,
	\]
	and $ B(k) $ is tridiagonal with nonzero elements given by
	\begin{align*} 
		(B(k))_{j,j-1}&=\rho_j\psi'_j(d_{j-1},k)>0,\\
		(B(k))_{j,j}&= -\rho_{j+1}\psi'_{j+1}(d_{j+1},k)-\rho_j\psi'_j(d_j,k)<0,\\
		(B(k))_{j,j+1}&=\rho_{j+1}\psi'_{j+1}(d_j,k)>0.
	\end{align*}
	Moreover, $ B(k) $ can be further decomposed into
	$ B(k)=\hat{B}(k)+\check{B}(k) $, where
	\begin{align*}
		(\hat{B}(k))_{j,j-1}&=
		\begin{cases}
			0 &\text{$ j $ odd},\\
			\rho_j\psi'_j(d_{j-1},k) &\text{$ j $ even},
		\end{cases}\\
		(\hat{B}(k))_{j,j}&=
		\begin{cases}
			-\rho_{j+1}\psi'_{j+1}(d_{j+1},k)&\text{$ j $ odd},\\
			-\rho_j\psi'_j(d_j,k)&\text{$ j $ even},
		\end{cases}\\
		(\hat{B}(k))_{j,j+1}&=
		\begin{cases}
			\rho_{j+1}\psi'_{j+1}(d_j,k) &\text{$ j $ odd},\\
			0 &\text{$ j $ even},
		\end{cases}
	\end{align*}
	and
	\begin{align*}
		(\check{B}(k))_{j,j-1}&=
		\begin{cases}
			\rho_j\psi'_j(d_{j-1},k) &\text{$ j $ odd},\\
			0 &\text{$ j $ even},
		\end{cases}\\
		(\check{B}(k))_{j,j}&=
		\begin{cases}
			-\rho_j\psi'_j(d_j,k)&\text{$ j $ odd},\\
			-\rho_{j+1}\psi'_{j+1}(d_{j+1},k)&\text{$ j $ even},
		\end{cases}\\
		(\check{B}(k))_{j,j+1}&=
		\begin{cases}
			0 &\text{$ j $ odd},\\
			\rho_{j+1}\psi'_{j+1}(d_j,k) &\text{$ j $ even}.
		\end{cases}
	\end{align*}
	 Computing the leading principal minors of $ -\hat{B}(k) $ and $ -\check{B}(k) $ is straightforward. We obtain
	\[ 
		\det(-\hat{B}_{l}(k))=
		\begin{cases}
			k^{l}\prod_{i=1}^{\frac{l}{2}}\rho_{2i}^2 & \text{$l$ even},\\
			k^{l}\rho_{l+1}\psi'_{l+1}(d_{l+1},k)\prod_{i=1}^{\frac{l-1}{2}}\rho_{2i}^2 & \text{$l$ odd},
		\end{cases}
	\]
	and
	\[  
		\det(-\check{B}_{l}(k))=
		\begin{cases}
			k^{l}\rho_1 \psi'_1(d_1,k)\rho_{l+1}\psi'_{l+1}(d_{l+1},k)\prod_{i=1}^{\frac{l-2}{2}}\rho_{2i+1}^2 & \text{$l$ even},\\
			k^{l}\rho_1 \psi'_1(d_1,k)\prod_{i=1}^{\frac{l-1}{2}}\rho_{2i+1}^2 & \text{$l$ odd}.
		\end{cases}
	\]
	Both being positive for all $ 1\leq l\leq n $ if $ \rho_{n+1}>0 $, which means $ -\hat{B}(k) $ and $ -\check{B}(k) $ are positive definite, whence $ \hat{B}(k) $ and $ \check{B}(k) $ are negative definite and so is their sum $ B(k) $. If $ \rho_{n+1}=0 $, then either $ \hat{B}(k) $ or $ \check{B}(k) $ have only zeros in the last row and column. However, $ \hat{B}_{n-1}(k) $ and $ \check{B}_{n-1}(k) $ are still negative definite. Thus one of $ \hat{B}(k) $ and $ \check{B}(k) $ is negative definite, while the other is negative semi-definite. The conclusion is that their sum is negative definite still holds in this case as well.
\end{proof}
With these properties we can prove that \Cref{assumption:first} $ (i) $ and $ (ii) $ are satisfied. In fact there are several $ \bm{\tau}^* $ for $ n>1 $.
\begin{proposition}\label{prop:As(I&II)Irrotational} If $ \bm{\alpha}=0 $, then there exist $ n^2 $ parameter values $ \bm{\tau}^*_{\iota,\kappa}\in\mathfrak{Z} $, $ 1\leq \iota,\kappa\leq n $, such that \Cref{assumption:first} (i) and (ii) are satisfied with distinct $(\hat{\bm{\eta}}_\iota(\bm{k}_1),\hat{\bm{\eta}}_\kappa(\bm{k}_2))= (\ker(A(\bm{\tau}^*_{\iota,\kappa},\bm{k_1})),\ker(A(\bm{\tau}^*_{\iota,\kappa},\bm{k_2})))$.
\end{proposition}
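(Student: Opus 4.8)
The plan is to exploit the decomposition \eqref{eq:AnoVort} together with the spectral information for $B(k)$ from \Cref{lemma:ApropertiesIrrotational}. Set $M(k)=\Sigma^{-1/2}(k)B(k)\Sigma^{-1/2}(k)$, so that
\[
A(\bm{\tau},\bm{k})=\Sigma^{1/2}(k)\left(I+r^2\cos^2(\theta-\gamma)M(k)\right)\Sigma^{1/2}(k).
\]
Since $\Sigma^{-1/2}(k)$ is positive diagonal and $B(k)$ is tridiagonal with positive off-diagonal entries and negative definite, $M(k)$ is a symmetric, negative definite, unreduced Jacobi matrix; hence it has $n$ \emph{simple} eigenvalues $\mu_1(\bm{k})<\mu_2(\bm{k})<\dots<\mu_n(\bm{k})<0$ with an orthonormal eigenbasis $\bm{\xi}_1(k),\dots,\bm{\xi}_n(k)$. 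It follows that $\det A(\bm{\tau},\bm{k})=0$ precisely when $r^2\cos^2(\theta-\gamma)=-1/\mu_l(\bm{k})$ for some $l$, and in that case $\ker A(\bm{\tau},\bm{k})=\spann\{\Sigma^{-1/2}(k)\bm{\xi}_l(k)\}$ is one dimensional.

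First I would, for each fixed pair $(\iota,\kappa)$ with $1\leq\iota,\kappa\leq n$, seek $\bm{\tau}^*_{\iota,\kappa}=(r,\theta)$ solving the two scalar equations $r^2\cos^2(\theta-\gamma_1)=-1/\mu_\iota(\bm{k}_1)$ and $r^2\cos^2(\theta-\gamma_2)=-1/\mu_\kappa(\bm{k}_2)$. Dividing these, $\theta$ must satisfy
\[
\frac{\cos^2(\theta-\gamma_1)}{\cos^2(\theta-\gamma_2)}=\frac{\mu_\kappa(\bm{k}_2)}{\mu_\iota(\bm{k}_1)}>0 .
\]
Because $\bm{k}_1,\bm{k}_2$ are linearly independent we have $\gamma_1-\gamma_2\notin\pi\mathbb{Z}$, so the zeros (mod $\pi$) of $\cos(\theta-\gamma_1)$ and of $\cos(\theta-\gamma_2)$ are distinct; on the arc between two such consecutive zeros the left-hand side is continuous, positive, runs from $0$ to $+\infty$, and hence attains the value $\mu_\kappa(\bm{k}_2)/\mu_\iota(\bm{k}_1)$ at some $\theta^*_{\iota,\kappa}$ with $\cos(\theta^*_{\iota,\kappa}-\gamma_i)\neq0$ for $i=1,2$. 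Putting $r^*_{\iota,\kappa}=\big(-1/(\mu_\iota(\bm{k}_1)\cos^2(\theta^*_{\iota,\kappa}-\gamma_1))\big)^{1/2}>0$ then solves both equations, so $A(\bm{\tau}^*_{\iota,\kappa},\bm{k}_1)$ and $A(\bm{\tau}^*_{\iota,\kappa},\bm{k}_2)$ are both singular with one dimensional kernels spanned by $\hat{\bm{\eta}}_\iota(\bm{k}_1)=\Sigma^{-1/2}(k_1)\bm{\xi}_\iota(k_1)$ and $\hat{\bm{\eta}}_\kappa(\bm{k}_2)=\Sigma^{-1/2}(k_2)\bm{\xi}_\kappa(k_2)$; this is part \textit{(i)}. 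Since the first slot depends only on $\iota$ and the second only on $\kappa$, and the $\bm{\xi}_l$ span distinct lines, the $n^2$ pairs are distinct (and in fact the $\bm{\tau}^*_{\iota,\kappa}$ themselves are distinct, as equality would force equality of eigenvalues).

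For part \textit{(ii)}, differentiating \eqref{eq:AnoVort} gives $\partial_r A(\bm{\tau},\bm{k}_i)=2r\cos^2(\theta-\gamma_i)B(k_i)$ and $\partial_\theta A(\bm{\tau},\bm{k}_i)=-r^2\sin(2(\theta-\gamma_i))B(k_i)$. Writing $b_i=\hat{\bm{\eta}}(\bm{k}_i)\cdot B(k_i)\hat{\bm{\eta}}(\bm{k}_i)$, we have $b_i<0$ by negative definiteness of $B(k_i)$ (indeed, with the unit normalization $b_i=\mu_\iota(\bm{k}_1)$, resp.\ $\mu_\kappa(\bm{k}_2)$). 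A direct computation evaluated at $\bm{\tau}^*_{\iota,\kappa}$ then gives
\[
\det\nu=2(r^*)^3b_1b_2\Big(\sin(2(\theta^*-\gamma_1))\cos^2(\theta^*-\gamma_2)-\cos^2(\theta^*-\gamma_1)\sin(2(\theta^*-\gamma_2))\Big),
\]
and the trigonometric identity $\sin(2a)\cos^2 b-\cos^2 a\sin(2b)=2\cos a\cos b\sin(a-b)$ reduces the bracket to $2\cos(\theta^*-\gamma_1)\cos(\theta^*-\gamma_2)\sin(\gamma_2-\gamma_1)$. Each factor is nonzero: $r^*>0$, $b_1b_2>0$, $\cos(\theta^*-\gamma_i)\neq0$ by construction, and $\sin(\gamma_2-\gamma_1)\neq0$ since $\gamma_1-\gamma_2\notin\pi\mathbb{Z}$; hence $\nu$ is invertible, which is part \textit{(ii)}.

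I expect the main obstacle to be the bookkeeping in the first step: one must be sure the equation for $\theta$ is solvable on an interval where the denominator does not vanish, and that the chosen $\theta^*$ simultaneously satisfies $\cos(\theta^*-\gamma_i)\neq0$ with all $\mu_l<0$, so that $r^*$ is well defined and positive and so that the determinant in part \textit{(ii)} does not degenerate. Once the sign information of \Cref{lemma:ApropertiesIrrotational} (negative definiteness and simple spectrum of the Jacobi matrix) is in hand, the rest is the identity displayed above.
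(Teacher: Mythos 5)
Your proposal is correct and follows essentially the same route as the paper: the same decomposition $A=\Sigma+r^2\cos^2(\theta-\gamma)B$, the same construction of $\bm{\tau}^*_{\iota,\kappa}$ from the simple negative eigenvalues of $\Sigma^{-1/2}B\Sigma^{-1/2}$ via the quotient equation for $\theta$, and the same non-degeneracy argument for part \textit{(ii)} — your expression $4(r^*)^3b_1b_2\cos(\theta^*-\gamma_1)\cos(\theta^*-\gamma_2)\sin(\gamma_2-\gamma_1)$ is, up to sign, exactly the paper's $\frac{4}{r^*}(\tan(\theta^*-\gamma_2)-\tan(\theta^*-\gamma_1))[\hat{\bm{\eta}}(k_1)\cdot\Sigma(k_1)\hat{\bm{\eta}}(k_1)][\hat{\bm{\eta}}(k_2)\cdot\Sigma(k_2)\hat{\bm{\eta}}(k_2)]$ after using $\hat{\bm{\eta}}\cdot\Sigma\hat{\bm{\eta}}=-r^2\cos^2(\theta-\gamma)\,\hat{\bm{\eta}}\cdot B\hat{\bm{\eta}}$ at the bifurcation point.
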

\begin{remark}
	It should be noted that the $ \bm{\tau}^*_{\iota,\kappa} $ for fixed $ \iota $ and $ \kappa $ is not unique in general. The proposition only ensures us that there exists at least one $ \bm{\tau}^*_{\iota,\kappa} $ for every possible choice of $ \iota $ and $ \kappa $.
\end{remark}
\begin{proof} By \Cref{lemma:ApropertiesIrrotational,lemma:Aproperties} the matrices $ \Sigma^{-1/2}(k_l)B(k_l)\Sigma^{-1/2}(k_l) $ are symmetric, tridiagonal, negative definite with nonzero elements on the sub- and superdiagonals. Thus there exist eigenvalues $\mu_\iota(k_1)<0$ and $ \mu_\kappa(k_2)<0$, and corresponding orthogonal eigenvectors $ \bm{\xi}_\iota(k_1)\neq 0 $ and $ \bm{\xi}_\kappa(k_2)\neq 0 $, $ 1\leq \iota,\kappa\leq n $. Moreover, $ \mu_{\iota}(k_l)\neq \mu_{\kappa}(k_l) $ if $ \iota\neq \kappa $ because
	\begin{equation}\label{eq:Aker1dim}
		\dim(\ker{(\Sigma^{-1/2}(k_l)B(k_l)\Sigma^{-1/2}(k_l)-\mu_{\iota}(k_l)I)})=1
	\end{equation}
	for all $ \iota $. This follows from the rank-nullity theorem and the fact that
	\[ 
		\dim(\im{(\Sigma^{-1/2}(k_l)B(k_l)\Sigma^{-1/2}(k_l)-\mu_{\iota}(k_l)I)})=n-1,
	\]
	because the elements of $ \Sigma^{-1/2}(k_l)B(k_l)\Sigma^{-1/2}(k_l)-\mu_{\iota}(k_l)I $ on the sub- and superdiagonals are nonzero.
	We obtain $ \bm{\tau}_{\iota,\kappa}^* $ by solving
	\begin{align*} 
		r^2\cos^2(\theta-\gamma_1)=-\frac{1}{\mu_{\iota}(k_1)}, \qquad
		r^2\cos^2(\theta-\gamma_2)=-\frac{1}{\mu_{\kappa}(k_2)}.
	\end{align*}
	It follows that $ \theta^*_{\iota,\kappa} $ is a solution to
	\[ 
		\frac{\cos^2(\theta-\gamma_1)}{\cos^2(\theta-\gamma_2)}=\frac{\mu(k_2)}{\mu(k_1)},
	\]
	which always exist because $\frac{\mu(k_2)}{\mu(k_1)}>0$ and $ \frac{\cos^2(\theta-\gamma_1)}{\cos^2(\theta-\gamma_2)} $ as a function of $ \theta $ maps onto $ (0,\infty) $. In fact this equation has several solutions in $[0,2\pi)$, so for simplicity we pick $ \theta^*_{\iota,\kappa} $ to be the smallest such solution. Then $ r^*_{\iota,\kappa} $ is simply given by
	\[ 
		r^*_{\iota,\kappa}=\frac{1}{\sqrt{-\cos^2(\theta^*_{\iota,\kappa}-\gamma_1)\mu_{\iota}(k_1)}}=\frac{1}{\sqrt{-\cos^2(\theta^*_{\iota,\kappa}-\gamma_2)\mu_{\kappa}(k_1)}}.
	\]
	\Cref{assumption:first} (i) follows by setting $ \hat{\bm{\eta}}_{\iota}(k_1)=\Sigma^{-1/2}(k_1)\bm{\xi}_{\iota}(k_1) $ and\\  $ \hat{\bm{\eta}}_{\kappa}(k_2)=\Sigma^{-1/2}(k_2)\bm{\xi}_{\kappa}(k_2) $ giving
	\begin{align*} 
		A(\bm{\tau}^*_{\iota,\kappa},\bm{k}_1)\hat{\bm{\eta}}_{\iota}(k_1)&=(\Sigma(k_1)+(r^*_{\iota,\kappa})^2\cos^2(\theta^*_{\iota,\kappa}-\gamma)B(k_1))\hat{\bm{\eta}_{\iota}}(k_1)\\
		&=-\frac{1}{\mu_{\iota}(k_1)}\Sigma^{1/2}(k_1)(\Sigma^{-1/2}(k_1)B(k_1)\Sigma^{-1/2}(k_1)-\mu_{\iota}(k_1)I)\hat{\bm{\xi}_{\iota}}(k_1)\\
		&=0,
	\end{align*}
	and similarly for $ \hat{\bm{\eta}}_\kappa(k_2) $. Moreover the kernels of $ A(\bm{\tau}^*_{\iota,\kappa},\bm{k}_1) $ and $ A(\bm{\tau}^*_{\iota,\kappa},\bm{k}_2) $ are both one dimensional due to \cref{eq:Aker1dim}.
	
	For \cref{assumption:first} (ii) we calculate
	\[ 
		\det \begin{pmatrix}
			\nu_{1,1} &\nu_{1,2}\\
			\nu_{2,1} &\nu_{2,2}\\
		\end{pmatrix}=\frac{4}{r^*}(\tan(\theta^*-\gamma_2)-\tan(\theta^*-\gamma_1))[\hat{\bm{\eta}}(k_1)\cdot \Sigma(k_1)\hat{\bm{\eta}}(k_1)][\hat{\bm{\eta}}(k_2)\cdot \Sigma(k_2)\hat{\bm{\eta}}(k_2)],
	\]
	which is zero only if $ \tan(\theta^*-\gamma_2)= \tan(\theta^*-\gamma_1) $. However, this would require
	\[ 
		\gamma_1-\gamma_2=n\pi
	\]
	for some $ n\in \mathbb{Z} $, but this contradicts the fact that $ \bm{k}_1 $ and $ \bm{k}_2 $ are linearly independent.
\end{proof}
Under stricter assumptions on the lattice $\Lambda'$ we can also prove \Cref{assumption:first} $ (iii) $ is satisfied. At least if we modify $\bm{\sigma}$ by an arbitrarily small amount. On the other hand we can drop the assumption that the vorticity is small for the following proposition.
\begin{proposition}\label{prop:As(III)Irrotational}
	If $ \bm{\sigma}=\bm{\sigma}_0 $, $ \Lambda'$ is a non-degenerate symmetric lattice, and $ \bm{\tau}^*\in\mathfrak{Z} $ is such that parts \textit{(i)} and \textit{(ii)} of \Cref{assumption:first} are satisfied, then for every $ \varepsilon>0 $ there exist $ \tilde{\bm{\sigma}} $ and corresponding $ \tilde{\bm{\tau}}^*\in\mathfrak{Z} $, such that $ \vert\tilde{\bm{\sigma}}-\bm{\sigma}_0\vert<\varepsilon $ and \Cref{assumption:first} is satisfied for $ \tilde{\bm{\sigma}} $ and $ \tilde{\bm{\tau}}^* $.
\end{proposition}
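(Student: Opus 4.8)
Only part \textit{(iii)} of \cref{assumption:first} remains to be arranged, parts \textit{(i)} and \textit{(ii)} being assumed to hold at $\bm{\tau}^*$ (necessarily with $r^*>0$, since $A(\bm{\tau},\bm{k}_1)=\Sigma(k_1)$ is positive definite when $r=0$). The plan is: first reduce part \textit{(iii)} to finitely many wave vectors; then track $\bm{\tau}^*$ along an analytic branch $\bm{\sigma}\mapsto\bm{\tau}^*(\bm{\sigma})$ on which parts \textit{(i)} and \textit{(ii)} persist; and finally show that along this branch each remaining condition fails only on a thin set, so that $\tilde{\bm{\sigma}}$ can be chosen outside their union. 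Write $A(\bm{\tau},\bm{k})=\Sigma(|\bm{k}|)+r^2C(\theta,\bm{k})$, with $\Sigma(|\bm{k}|)=\operatorname{diag}\bigl(\sigma_j|\bm{k}|^2+(\rho_j-\rho_{j+1})g\bigr)$ and $C$ tridiagonal, symmetric, and independent of $\bm{\sigma}$ and $r$ (clear from \cref{eq:lin_prob_kn0}). Since the diagonal terms $\sigma_j|\bm{k}|^2$ dominate for large $|\bm{k}|$, there is a $K$ with $\det A(\bm{\tau},\bm{k})\neq0$ for $|\bm{k}|>K$ uniformly near $(\bm{\sigma}_0,\bm{\tau}^*)$, while $\det A(\bm{\tau},0)=\prod_j(\rho_j-\rho_{j+1})g>0$ by \cref{eq:lin_prob_k0}. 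Hence only the finite set $\mathcal{K}=\{\bm{k}\in\Lambda':0<|\bm{k}|\le K\}\setminus\{\pm\bm{k}_1,\pm\bm{k}_2\}$ needs attention, and non-degeneracy of the symmetric lattice $\Lambda'$ gives $|\bm{k}|\neq|\bm{k}_1|=|\bm{k}_2|=:\kappa$ for every $\bm{k}\in\mathcal{K}$.

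To track $\bm{\tau}^*$, set $G(\bm{\sigma},\bm{\tau})=\bigl(\det A(\bm{\tau},\bm{k}_1),\det A(\bm{\tau},\bm{k}_2)\bigr)$, which is real-analytic and vanishes at $(\bm{\sigma}_0,\bm{\tau}^*)$ by part \textit{(i)}. By Jacobi's formula $\partial_p\det M=\Tr\bigl(\operatorname{adj}(M)\,\partial_pM\bigr)$ and the fact that the symmetric matrix $A(\bm{\tau}^*,\bm{k}_i)$ has one-dimensional kernel, $\operatorname{adj} A(\bm{\tau}^*,\bm{k}_i)=c_i\,\hat{\bm{\eta}}(\bm{k}_i)\hat{\bm{\eta}}(\bm{k}_i)^{T}$ with $c_i\neq0$; hence $\partial_{\bm{\tau}}G(\bm{\sigma}_0,\bm{\tau}^*)=\operatorname{diag}(c_1,c_2)\,[\nu_{i,l}]$, which is invertible by part \textit{(ii)}. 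The analytic implicit function theorem (\cite[Theorem 4.5.4]{Buffoni_2003}) then yields a ball $V\subseteq B_\varepsilon(\bm{\sigma}_0)$ about $\bm{\sigma}_0$ and a real-analytic map $\bm{\sigma}\mapsto\bm{\tau}^*(\bm{\sigma})$, $\bm{\tau}^*(\bm{\sigma}_0)=\bm{\tau}^*$, solving $G=0$ on $V$. After shrinking $V$: $\operatorname{rank} A(\bm{\tau}^*(\bm{\sigma}),\bm{k}_i)\ge n-1$ by lower semicontinuity of rank, and $\det A=0$ forces rank $\le n-1$, so the kernel stays one-dimensional and part \textit{(i)} persists; part \textit{(ii)} persists since invertibility of $[\nu_{i,l}]$ is an open condition; and \cref{eq:no-resonance} does not involve $\bm{\sigma}$.

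For $\bm{k}\in\mathcal{K}$ put $D_{\bm{k}}(\bm{\sigma})=\det A(\bm{\tau}^*(\bm{\sigma}),\bm{k})$, real-analytic on the connected set $V$. If each $D_{\bm{k}}\not\equiv0$ on $V$, then each $D_{\bm{k}}^{-1}(0)$ is closed and nowhere dense, hence so is their finite union; choosing $\tilde{\bm{\sigma}}$ in the complement and $\tilde{\bm{\tau}}^*=\bm{\tau}^*(\tilde{\bm{\sigma}})$ then satisfies all three parts of \cref{assumption:first} with $|\tilde{\bm{\sigma}}-\bm{\sigma}_0|<\varepsilon$, proving the proposition. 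So the crux is to show $D_{\bm{k}}\not\equiv0$. If $D_{\bm{k}}(\bm{\sigma}_0)\neq0$ this is immediate; otherwise, with $\bm{\zeta}$ spanning $\ker A(\bm{\tau}^*,\bm{k})$, one computes $\nabla_{\bm{\sigma}}D_{\bm{k}}(\bm{\sigma}_0)$ by the chain rule through $\bm{\tau}^*(\bm{\sigma})$, using $\partial_{\sigma_l}A(\bm{\tau},\bm{k})=|\bm{k}|^2\bm{e}_l\bm{e}_l^{T}$ and $\partial_{\sigma_l}A(\bm{\tau},\bm{k}_i)=\kappa^2\bm{e}_l\bm{e}_l^{T}$, and eliminates $\partial_{\sigma_l}\bm{\tau}^*$ by the relations obtained from differentiating $\det A(\bm{\tau}^*(\bm{\sigma}),\bm{k}_i)\equiv0$. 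Supposing $D_{\bm{k}}\equiv0$ then forces identities $|\bm{k}|^2\zeta_l^2=\kappa^2\bigl(a\,\hat{\eta}_{1,l}^2+b\,\hat{\eta}_{2,l}^2\bigr)$, $l=1,\dots,n$, to hold throughout $V$; since the diagonal part $\Sigma$ enters the three determinant conditions with the mismatched weights $|\bm{k}|^2$ and $\kappa^2$, this contradicts $|\bm{k}|\neq\kappa$.

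The main obstacle is precisely this last step: turning the length mismatch $|\bm{k}|\neq\kappa$ — the sole point where non-degeneracy of the symmetric lattice is used — into a rigorous contradiction, and dispatching the degenerate sub-cases, namely a higher-dimensional kernel of $A(\bm{\tau}^*,\bm{k})$ (handled by a preliminary perturbation within $V$ that restores a one-dimensional kernel) and configurations in which $\hat{\bm{\eta}}(\bm{k}_1)$ and $\hat{\bm{\eta}}(\bm{k}_2)$ have componentwise proportional squares. When $n=1$ all the matrices are scalars and the direct hyperbola analysis from the proof of \cref{thm:existsn1} replaces this step.
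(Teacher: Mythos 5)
Your overall framework (reduce part \textit{(iii)} to finitely many $\bm{k}$, continue $\bm{\tau}^*$ analytically in $\bm{\sigma}$ via the implicit function theorem with Jacobian $\operatorname{diag}(c_1,c_2)[\nu_{i,l}]$, then argue that the bad set of $\bm{\sigma}$ is thin) is sound as far as it goes, and the reduction and continuation steps are correct. But the proof has a genuine gap exactly where you flag ``the main obstacle'': you never actually prove that $D_{\bm{k}}\not\equiv 0$ on $V$. The first-order identity you derive, $\vert\bm{k}\vert^2\zeta_l^2=\kappa^2\bigl(a\,\hat{\eta}_{1,l}^2+b\,\hat{\eta}_{2,l}^2\bigr)$ for $l=1,\dots,n$, does \emph{not} contradict $\vert\bm{k}\vert\neq\kappa$: the constants $a,b$ (coming from $(\nu^{(\bm{k})}_r,\nu^{(\bm{k})}_\theta)[\nu]^{-1}$) are unconstrained and can absorb the ratio $\kappa^2/\vert\bm{k}\vert^2$, so these $n$ scalar identities can perfectly well hold. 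Even if they failed generically, $D_{\bm{k}}\equiv 0$ would only be excluded after controlling derivatives of all orders along the branch, which you do not attempt. The degenerate sub-cases (higher-dimensional $\ker A(\bm{\tau}^*,\bm{k})$, so that $\operatorname{adj}A=0$ and the first-order computation collapses) are likewise only named, not handled. So the argument does not close.

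The paper avoids this difficulty entirely by choosing the perturbation direction explicitly rather than generically: it sets $(\sigma_q)_j=\bigl((1+q)(\sigma_0)_jk_1^2+qg(\rho_j-\rho_{j+1})\bigr)/k_1^2$ and $\bm{\tau}^*_q=((1+q)^{1/2}r^*,\theta^*)$, which gives the exact scaling $A_q(\bm{\tau}^*_q,\bm{k}_i)=(1+q)A_0(\bm{\tau}^*_0,\bm{k}_i)$ at $\bm{k}_1,\bm{k}_2$ (this is where $\vert\bm{k}_1\vert=\vert\bm{k}_2\vert$ is used), so parts \textit{(i)} and \textit{(ii)} persist trivially, while at any other $\bm{k}_3$ one gets $A_q(\bm{\tau}^*_q,\bm{k}_3)=(1+q)A_0(\bm{\tau}^*_0,\bm{k}_3)+q\bigl(k_3^2/k_1^2-1\bigr)G$ with $G=\operatorname{diag}(g(\rho_j-\rho_{j+1}))$. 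Then $\det A_q(\bm{\tau}^*_q,\bm{k}_3)$ is a polynomial in $q$ whose value at $q=-1$ is $\bigl(1-k_3^2/k_1^2\bigr)^n\det G\neq 0$ by non-degeneracy of the lattice, so it vanishes for only finitely many $q$. This is the step your proposal is missing: an explicit evaluation that certifies the non-vanishing, in place of an unverified genericity claim. If you want to salvage your route, you would need to replace the ``contradiction with $\vert\bm{k}\vert\neq\kappa$'' by a concrete curve of $\bm{\sigma}$'s along which the determinant at each bad $\bm{k}$ is demonstrably not identically zero --- which is precisely what the paper's choice of $\bm{\sigma}_q$ accomplishes.
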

\begin{proof} 
	First note that we always have $\det A(\bm{\tau},0)\neq 0$. For $ \bm{k}\neq 0 $ can define $ A_q(\bm{\tau},\bm{k}) $ by replacing $ \bm{\sigma}_0 $ with $\bm{\sigma}_q$, where
	\[ 
	(\sigma_q)_j=\frac{(1+q)(\sigma_0)_jk_1^2+q g(\rho_j-\rho_{j+1})}{k_1^2}, \qquad j=1,\ldots,n.
	\]
	We also define 
	\[ 
		\bm{\tau}^*_q=((1+q)^{1/2}r^*,\theta^*)
	\]
	and obtain
	\[ 
		A_q(\bm{\tau}^*_q,\bm{k}_i)=(1+q)A_0(\bm{\tau}^*_0,\bm{k}_i), \qquad i=1,2.
	\]
	Clearly $ \bm{\tau}^*_0=\bm{\tau}^* $ and $ A_0(\bm{\tau},\bm{k})=A(\bm{\tau},\bm{k}) $, so $ \det A_q(\bm{\tau}^*_q,\bm{k}_i)=0 $ for $ i=1,2 $ and all $q\in [-1,\infty)$. On the other hand, if there exists some $ \bm{k}_3\in\Lambda' $ such that $ \det A(\bm{\tau}^*,\bm{k}_3)=0 $, then
	\[
		A_q(\bm{\tau}^*_q,\bm{k}_3)=(1+q)A_0(\bm{\tau}^*_0,\bm{k}_i)+ q\left(\frac{k_3^2}{k_1^2}-1\right)G,
	\]
	where $ G $ is the diagonal matrix with entries given by
	\[ 
		G_{j,j}=g(\rho_j-\rho_{j+1}).
	\]
	Now $ \det A_q(\bm{\tau}^*_q,\bm{k}_3)$ is a polynomial with respect to $ q $. Moreover, it is not identically $ 0 $; for example,
	\[ 
		\det A_{-1}(\bm{\tau}^*_{-1},\bm{k}_3)= \left(1-\frac{k_3^2}{k_1^2}\right)^n\det R,
	\]
	whence $ \det A_q(\bm{\tau}^*_q,\bm{k}_3)\neq 0 $ for all but a finite number of $ q\in [-1,\infty) $. Thus we can find $ \epsilon $ such that $ \det A_q(\bm{\tau}^*_q,\bm{k}_3)\neq 0$ for all $ q\in(-\epsilon,0)\cup(0,\epsilon) $.  Since only a finite number of $ \bm{k}\in \Lambda' $ can satisfy $ \det A(\bm{\tau}^*,\bm{k})=0 $, we repeat this a finite number of times, shrinking $ \epsilon $ if necessary. Moreover, by continuity, \Cref{assumption:first} (ii) remains satisfied for all $ q $ in some neighborhood of $ 0 $. Thus, after possibly shrinking $ \epsilon $ again, we find that \Cref{assumption:first} is satisfied for $ \tilde{\bm{\sigma}}=\bm{\sigma}_q $ if we choose $ \tilde{\bm{\tau}}=\bm{\tau}_q $ for any $ q\in(-\epsilon,0)\cup(0,\epsilon) $. Choosing any $ q $ such that $ \vert q\vert  $ is sufficiently small gives the desired result.
\end{proof}
We can now extend these results to small $ \vert \bm{\alpha}\vert $.
	\begin{proposition}\label{prop:small_alpha} If \Cref{assumption:first} holds for some $ \bm{\tau}^*\in\mathfrak{Z} $ with $ \bm{\alpha}=0 $, then there exist $ \bm{\tau}(\bm{\alpha})\in\mathfrak{Z} $ for every $\vert \bm{\alpha}\vert\ll 1 $ such that \Cref{assumption:first} holds for $\bm{\tau}(\bm{\alpha})$ as well.
	\end{proposition}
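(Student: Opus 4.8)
The plan is to view the two conditions $\det A(\bm{\tau},\bm{k}_1)=0$ and $\det A(\bm{\tau},\bm{k}_2)=0$ as a system of two equations for the two-dimensional parameter $\bm{\tau}$, to solve it for $\bm{\tau}$ as a function of $\bm{\alpha}$ near $\bm{\alpha}=0$ by the analytic implicit function theorem, and then to check by continuity and a compactness argument that the remaining requirements of \Cref{assumption:first} persist under the perturbation. First I would record that $A(\bm{\tau},\bm{k})$ depends real-analytically on $(\bm{\tau},\bm{\alpha})$ near $(\bm{\tau}^*,0)$ and that the non-resonance condition \eqref{eq:no-resonance} holds there automatically: once $|\bm{\alpha}|$ is smaller than the length of the shortest nonzero vector of $\Lambda'$, every $\bm{k}\in\Lambda'\setminus\{0\}$ satisfies $|\bm{k}|>|\alpha_j|$, so each $\psi_j(\cdot,\bm{k})$ is given by its hyperbolic branch, which is analytic in $\alpha_j^2$; the angles $\theta_j$ and the quantities $\beta_j(\bm{k})$, $\beta_j^\perp(\bm{k})$ are analytic in $\bm{\alpha}$; and the $\bm{k}=0$ block of $A$ is $(\rho_j-\rho_{j+1})g$, independent of $\bm{\alpha}$. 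Set $\bm{G}(\bm{\tau},\bm{\alpha})=\big(\det A(\bm{\tau},\bm{k}_1),\det A(\bm{\tau},\bm{k}_2)\big)$, so that $\bm{G}(\bm{\tau}^*,0)=0$ by hypothesis.

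The key step is to show $D_{\bm{\tau}}\bm{G}(\bm{\tau}^*,0)$ is invertible. Since $A(\bm{\tau}^*,\bm{k}_i)$ is symmetric (\Cref{lemma:Aproperties}) with one-dimensional kernel $\spann\{\hat{\bm{\eta}}(\bm{k}_i)\}$ by \Cref{assumption:first}\,(i), its adjugate is $\operatorname{adj}A(\bm{\tau}^*,\bm{k}_i)=c_i\,\hat{\bm{\eta}}(\bm{k}_i)\hat{\bm{\eta}}(\bm{k}_i)^{T}$ for some $c_i\neq0$ (a nonzero multiple of the product of the remaining $n-1$ eigenvalues). Jacobi's formula $\partial_s\det M=\operatorname{tr}\!\big(\operatorname{adj}(M)\,\partial_sM\big)$ then gives $\partial_r\det A(\bm{\tau}^*,\bm{k}_i)=c_i\,\hat{\bm{\eta}}(\bm{k}_i)\cdot\partial_rA(\bm{\tau}^*,\bm{k}_i)\hat{\bm{\eta}}(\bm{k}_i)=c_i\nu_{i,1}$ and likewise $\partial_\theta\det A(\bm{\tau}^*,\bm{k}_i)=c_i\nu_{i,2}$, so that $D_{\bm{\tau}}\bm{G}(\bm{\tau}^*,0)=\operatorname{diag}(c_1,c_2)\,(\nu_{i,l})$. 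This is invertible precisely because $(\nu_{i,l})$ is, by \Cref{assumption:first}\,(ii), and $c_1,c_2\neq0$. The analytic implicit function theorem \cite[Theorem~4.5.4]{Buffoni_2003} then produces a real-analytic map $\bm{\alpha}\mapsto\bm{\tau}(\bm{\alpha})$ for $|\bm{\alpha}|\ll1$ with $\bm{\tau}(0)=\bm{\tau}^*$ and $\det A(\bm{\tau}(\bm{\alpha}),\bm{k}_i)=0$ for $i=1,2$; since $r^*>0$ (otherwise $A(\bm{\tau}^*,\bm{k})=\Sigma(k)$ would be nonsingular), continuity gives $r(\bm{\alpha})>0$, so indeed $\bm{\tau}(\bm{\alpha})\in\mathfrak{Z}$.

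It remains to verify all of \Cref{assumption:first} at $\bm{\tau}(\bm{\alpha})$ for $|\bm{\alpha}|$ small. For (i): at $\bm{\alpha}=0$ the eigenvalue $0$ of $A(\bm{\tau}^*,\bm{k}_i)$ is simple and the other eigenvalues are bounded away from $0$, so by continuity of the spectrum $A(\bm{\tau}(\bm{\alpha}),\bm{k}_i)$ has a single eigenvalue near $0$ and the rest bounded away from $0$; since the determinant vanishes, that eigenvalue equals $0$, so the kernel stays one-dimensional, and I take $\hat{\bm{\eta}}(\bm{k}_i)$ to be a nonzero column of $\operatorname{adj}A(\bm{\tau}(\bm{\alpha}),\bm{k}_i)$, which depends analytically on $\bm{\alpha}$. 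For (ii): the $\nu_{i,l}$ are then continuous in $\bm{\alpha}$ and form a matrix invertible at $\bm{\alpha}=0$, hence nearby. For (iii): since $A(\bm{\tau},\bm{k})\hat{\bm{\eta}}(\bm{k})\sim|\bm{k}|^2\hat{\bm{\eta}}(\bm{k})$ for large $|\bm{k}|$, one has $\det A(\bm{\tau},\bm{k})\sim\big(\prod_j\sigma_j\big)|\bm{k}|^{2n}$ as $|\bm{k}|\to\infty$, uniformly for $\bm{\tau}$ in a compact neighbourhood of $\bm{\tau}^*$ and $|\bm{\alpha}|$ small, while $\det A(\bm{\tau},0)=\big(\prod_j(\rho_j-\rho_{j+1})\big)g^{n}>0$; hence $\det A(\bm{\tau}(\bm{\alpha}),\bm{k})\neq0$ for all $|\bm{k}|>K$, and only finitely many $\bm{k}\in\Lambda'$ with $0<|\bm{k}|\le K$ and $\bm{k}\neq\pm\bm{k}_i$ remain, for each of which $\det A(\bm{\tau}^*,\bm{k})\neq0$ by (iii) at $\bm{\alpha}=0$, so the same holds at $\bm{\tau}(\bm{\alpha})$ after shrinking $|\bm{\alpha}|$.

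The step that carries the content is the identity $D_{\bm{\tau}}\bm{G}(\bm{\tau}^*,0)=\operatorname{diag}(c_1,c_2)(\nu_{i,l})$: it is what makes \Cref{assumption:first}\,(ii) exactly the nondegeneracy hypothesis the implicit function theorem needs, and it requires a little care with the adjugate formula for the singular matrices $A(\bm{\tau}^*,\bm{k}_i)$. The one ostensibly infinite-dimensional point, condition (iii) over all of $\Lambda'$, is harmless because of the uniform $|\bm{k}|^{2n}$ growth of $\det A$, which reduces it to finitely many inequalities stable under small perturbations.
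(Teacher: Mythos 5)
Your proposal is correct, and it follows the same overall strategy as the paper: apply the implicit function theorem to the two scalar equations $\det A(\bm{\tau},\bm{k}_1)=0$, $\det A(\bm{\tau},\bm{k}_2)=0$ to continue $\bm{\tau}^*$ in $\bm{\alpha}$, then propagate parts \textit{(i)}--\textit{(iii)} of \Cref{assumption:first} by continuity, handling \textit{(iii)} via the $\vert\bm{k}\vert^{2n}$ growth of $\det A$, which reduces it to finitely many stable inequalities. Where you genuinely differ is in the verification of the crucial nondegeneracy of the $2\times 2$ Jacobian $D_{\bm{\tau}}\bm{G}(\bm{\tau}^*,0)$. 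The paper exploits the irrotational structure $A(\bm{\tau},\bm{k})=\Sigma(k)+\zeta B(k)$ with $\zeta=r^2\cos^2(\theta-\gamma)$ to compute $\det\Delta_{1,2}$ explicitly as $\tfrac{4\zeta_1^*\zeta_2^*}{r^*}\chi'_{\bm{k}_1}(\zeta_1^*)\chi'_{\bm{k}_2}(\zeta_2^*)\bigl(\tan(\theta^*-\gamma_2)-\tan(\theta^*-\gamma_1)\bigr)$, and argues nonvanishing from simplicity of the root $\zeta_i^*$ and from $\gamma_1-\gamma_2\notin\pi\mathbb{Z}$ --- so it rederives the transversality from scratch rather than invoking \Cref{assumption:first}\,\textit{(ii)}. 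You instead use Jacobi's formula together with the rank-one adjugate identity $\operatorname{adj}A(\bm{\tau}^*,\bm{k}_i)=c_i\hat{\bm{\eta}}(\bm{k}_i)\hat{\bm{\eta}}(\bm{k}_i)^T$ (valid since the matrix is symmetric with simple zero eigenvalue) to get $D_{\bm{\tau}}\bm{G}(\bm{\tau}^*,0)=\operatorname{diag}(c_1,c_2)\,(\nu_{i,l})$, so invertibility is literally \Cref{assumption:first}\,\textit{(ii)}. Your route is cleaner and more general: it uses only the hypotheses of the proposition, makes no use of the special $\bm{\alpha}=0$ form of $A$, and would continue a bifurcation point from any base value of $\bm{\alpha}$ at which the assumption holds; the paper's computation is more explicit and, as a by-product, exhibits concretely why the transversality holds in the irrotational case. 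Your additional observations (analyticity of $A$ in $(\bm{\tau},\bm{\alpha})$, automatic validity of \cref{eq:no-resonance} for small $\vert\bm{\alpha}\vert$, $r^*>0$, choosing the perturbed kernel vector as a column of the adjugate) are all sound and fill in details the paper leaves implicit.
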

	\begin{proof}Defining
		\[ 
		\Delta_{a,b}\coloneqq \begin{pmatrix}
		\partial_r \det A(\bm{\tau}^*,\bm{k}_a) &\partial_\theta \det A(\bm{\tau}^*,\bm{k}_a)\\
		\partial_r \det A(\bm{\tau}^*,\bm{k}_b) &\partial_\theta \det A(\bm{\tau}^*,\bm{k}_b)
		\end{pmatrix}
		\]
		we can find
		\begin{align*} 
		\det \Delta_{1,2}=\frac{4\zeta_1^*\zeta_2^*}{r^*}\chi'_{\bm{k}_1}(\zeta_1^*)\chi'_{\bm{k}_2}(\zeta_2^*)(\tan(\theta^*-\gamma_2)-\tan(\theta^*-\gamma_1))
		\end{align*}
		where $ \zeta_i=r^2\cos^2(\theta-\gamma_i) $ and $ \chi_{\bm{k}_i}(\zeta_i)=\det A(\bm{\tau},\bm{k}_i) $.
		We have $ \det \Delta_{1,2} \neq 0$, because $ \chi_{\bm{k}_i}(\zeta_i)\neq 0 $, due to the fact that $ \chi_{\bm{k}_i} $ is a polynomial and $ \zeta^*_i $ is a root with multiplicity $ 1 $ and, as above, $ \tan(\theta^*-\gamma_2)-\tan(\theta^*-\gamma_1)=0 $ only if $ \gamma_1-\gamma_2=n\pi $.
		Since $ \det\Delta_{1,2}\neq 0 $ we can define $ \bm{\tau}(\bm{\alpha}) $ such that $ \bm{\tau}(0)=\bm{\tau}^* $ and $ \det A(\bm{\tau}(\bm{\alpha}),\bm{k}_1) =\det A(\bm{\tau}(\bm{\alpha}),\bm{k}_2)=0 $ for all $ \vert \bm{\alpha}\vert\ll 1$.
		This means \Cref{assumption:first} (i) remains true. It is clear that part (ii) remains true by continuity. The same is true for part (iii) and any finite set of $ \bm{k}_i\in \Lambda' $, but since $ \det A(\bm{\tau}^*,\bm{k})\sim \vert \bm{k}\vert^{2n} $ for all large $ \vert \bm{k}\vert $ this is sufficient.
	\end{proof}
	Combining \Crefrange{prop:As(I&II)Irrotational}{prop:small_alpha} with the existence result in \Cref{thm:exist} gives \Cref{thm:existssmallvort}.
	\subsection{Discussion of large vorticity}\label{sec:largeVorticity}
	This subsection does not include any definite results about existence of solutions, but rather exemplifies some of the possibilities that can occur if we relax the assumption $ \vert \bm{\alpha}\vert\ll 1 $. We also exclusively focus on part \textit{(i)} and \textit{(ii)} of \Cref{assumption:first} because both and part \textit{(iii)} can reasonably be assumed true for most parameter values. There is no assumption on $\bm{\alpha}$ in \Cref{prop:As(III)Irrotational} and it is not unreasonable to suspect a similar result to hold true even for general $\Lambda'$. In the discussion below we implicitly consider part \textit{(iii)} of \Cref{assumption:first} to be satisfied when mentioning `solutions' or `interfaces'.
	
	In the general case we can write the matrix $ A(\bm{\tau},\bm{k}) $ as
	\[ 
		 A(\bm{\tau},\bm{k})=\Sigma(k)+r^2(C(\bm{k},\theta)B(k)C(\bm{k},\theta)+C(\bm{k},\theta)DS(\bm{k},\theta)),
	\]
	where
	\begin{align*}
		C_{j,j}(\bm{k},\theta)&=\beta_j(\bm{k})=\cos(\theta_j-\alpha_jd_j-\gamma),\\
		S_{j,j}(\bm{k},\theta)&=\beta^\perp_j(\bm{k})=\sin(\theta_j-\alpha_jd_j-\gamma),\\
		D_{j,j}&=\rho_{j}\alpha_{j}-\rho_{j+1}\alpha_{j+1}.
	\end{align*}
	From this we can define the matrix
	\[ 
		R(\bm{k},\theta)\coloneqq\Sigma^{-1/2}(k)[C(\bm{k},\theta)B(k)C(\bm{k},\theta)+C(\bm{k},\theta)DS(\bm{k},\theta)]\Sigma^{-1/2}(k),
	\]
	which is symmetric, whence it has $ n $ linearly independent eigenvectors $ \bm{\xi}_\iota(\bm{k},\theta) $, $ \iota=1,\ldots,n $ with corresponding real eigenvalues $ \mu_\iota(\bm{k},\theta) $, $ \iota=1,\ldots,n $. We can find $ \bm{\tau}^* $ such that \cref{assumption:first} \textit{(i)} is satisfied by finding $ \theta^* $ such that $ \mu_\iota(\bm{k}_1,\theta^*)=\mu_\kappa(\bm{k}_2,\theta^*)<0 $ for some $ 1\leq \iota,\kappa\leq n $. This is because
	\[ 
		R(\bm{k},\theta)\bm{\xi}_\iota(\bm{k},\theta)=\mu_\iota(\bm{k},\theta)\bm{\xi}_\iota(\bm{k},\theta)
	\]
	implies
	\[ 
		A(\bm{\tau},\bm{k})\hat{\bm{\eta}}_\iota(\bm{k},\theta)=0, 
	\]
	for $\hat{\bm{\eta}}_\iota(\bm{k},\theta)=\Sigma^{-1/2}(k)\bm{\xi}_\iota(\bm{k},\theta)$ if
	\begin{equation}\label{eq:rdef} 
		(r_\iota(\bm{k},\theta))^2=-\frac{1}{\mu_\iota(\bm{k},\theta)}.
	\end{equation}
	Conversely it is not difficult to check that a vector in $\ker A(\bm{\tau},\bm{k})$ gives rise to an eigenvector of $R(\bm{k},\theta)$ with negative eigenvalue given by \cref{eq:rdef}. So \Cref{assumption:first} part $(i)$ is satisfied if and only if we have $ \theta^* $ such that $ \mu_\iota(\bm{k}_1,\theta^*)=\mu_\kappa(\bm{k}_2,\theta^*)<0 $. If, in addition, $\partial_{\theta}\mu_\iota(\bm{k}_1,\theta^*)\neq \partial_{\theta}\mu_\kappa(\bm{k}_2,\theta^*)$ then \Cref{assumption:first} part \textit{(ii)} is also satisfied. We can see this by considering
	\[ 
		0=\frac{d}{d\theta}(\hat{\bm{\eta}}_\iota(\bm{k}_1,\theta)\cdot A((r_\iota(\bm{k}_1,\theta),\theta),\bm{k})\hat{\bm{\eta}}_\iota(\bm{k}_1,\theta))\vert_{\theta=\theta^*}=\nu_{1,2}+\nu_{1,1}\frac{(r^*)^3}{2}\partial_{\theta}\mu_\iota(\bm{k}_1,\theta^*),
	\]
	so  $\nu_{1,2}=-\nu_{1,1}\frac{(r^*)^3}{2}\partial_{\theta}\mu_\iota(\bm{k}_1,\theta^*)$. Similarly we obtain $\nu_{2,2}=-\nu_{2,1}\frac{(r^*)^3}{2}\partial_{\theta}\mu_\kappa(\bm{k}_2,\theta^*)$, which means
	\[
	\det \begin{pmatrix}
	\nu_{1,1} & \nu_{1,2}\\
	\nu_{2,1} & \nu_{2,2}
	\end{pmatrix}=\nu_{1,1}\nu_{2,1}\frac{(r^*)^3}{2}(\partial_{\theta}\mu_\iota(\bm{k}_1,\theta^*)-\partial_{\theta}\mu_\kappa(\bm{k}_2,\theta^*)).
	\]
	It is not difficult to show that $\nu_{1,1}\nu_{2,1}\frac{(r^*)^3}{2}\neq 0$, whence \Cref{assumption:first} part \textit{(ii)} is satisfied if $\partial_{\theta}\mu_\iota(\bm{k}_1,\theta^*)\neq \partial_{\theta}\mu_\kappa(\bm{k}_2,\theta^*)$. In summary, if there exists $\theta^*$ such that
	\[
		 \mu_\iota(\bm{k}_1,\theta^*)=\mu_\kappa(\bm{k}_2,\theta^*)<0 \quad\text{and}\quad\partial_{\theta}\mu_\iota(\bm{k}_1,\theta^*)\neq \partial_{\theta}\mu_\kappa(\bm{k}_2,\theta^*),
	\]
	then parts \textit{(i)} and \textit{(ii)} of \Cref{assumption:first} are satisfied for
	\[
		\bm{\tau}_{\iota,\kappa}^*=\left(\frac{1}{\sqrt{-\mu_\iota(\bm{k}_1,\theta^*)}},\theta^*\right)=\left(\frac{1}{\sqrt{-\mu_\kappa(\bm{k}_2,\theta^*)}},\theta^*\right)
	\]
	Moreover, if we have such an intersection point $\theta^*$ between $\mu_\iota(\bm{k}_1,\theta)$ and $\mu_\kappa(\bm{k}_2,\theta)$, then by \Cref{thm:exist} the solution to \crefrange{eq:curl}{eq:int_cond} will have interface profiles given by
	\begin{equation}\label{eq:interfaceGen}
	\bm{\eta}_{\iota,\kappa}(\theta^*)=\hat{\bm{\eta}}_\iota(\bm{k}_1,\theta^*)\cos(\bm{k}_1\cdot\bm{x}')t_1+\hat{\bm{\eta}}_\kappa(\bm{k}_2,\theta^*)\cos(\bm{k}_1\cdot\bm{x}')t_2+\mathcal{O}(\vert\bm{t}\vert^2)
	\end{equation}
	for some $\bm{t}=(t_1,t_2)\in B_\epsilon(0) $.
	
	\Cref{thm:existssmallvort} gives us the existence of at least $ n^2 $ such intersection points in the case $ \vert \bm{\alpha}\vert\ll 1 $, that is, $ \mu_\iota(\bm{k}_1,\theta) $ intersects $ \mu_\kappa(\bm{k}_2,\theta) $ in this manner for all $ 1\leq \iota, \kappa\leq n $. In fact, there will in general be more intersections points. Since $ R(\bm{k},\theta) $ is $ \pi $-periodic we can expect an even number of intersection points on the interval $ [0,\pi) $; see \Cref{subfig:EigvalsAlph0}. However, for $ \vert \bm{\alpha}\vert= 0 $ we also know that all the additional intersection points corresponds to a surface profile that is identical to one of the original $n^2$. This is because $\bm{\xi}_{\iota}(\bm{\kappa},\theta^*)$ is completely independent of $\theta^*$ in this case and $\bm{\eta}_{\iota,\kappa}(\theta^*)$ in \cref{eq:interfaceGen} is the same as $\bm{\eta}_{\iota,\kappa}$ in \Cref{thm:existssmallvort}. For $\bm{\alpha}\neq 0 $ this is not necessarily true, but for $\vert \bm{\alpha}\vert\ll 1 $ we have at least $n^2$ different solutions and the surface profile corresponding to any additional intersection point is very similar to the surface profile of one of these $n^2$ solutions. For larger $ \vert \bm{\alpha}\vert $ we can no longer guarantee the same amount of intersection points. What we can say is that if $ k_1=k_2 $, then $ \mu_\iota(\bm{k}_1,\theta) $ and $ \mu_\iota(\bm{k}_2,\theta) $ are the same function shifted by the angle between $ \bm{k}_1 $ and $ \bm{k}_2 $. This means $ \mu_\iota(\bm{k}_1,\theta) $ and $ \mu_\iota(\bm{k}_2,\theta) $ intersects if $ \Lambda' $ is a symmetric lattice. Moreover, if $ \bm{\alpha} $ and $ \bm{\rho} $ are such that $ D=0 $ then $ R(\bm{k},\theta) $ is negative semi-definite so the intersection points will also almost surely satisfy $\mu_\iota(\bm{k}_1,\theta^*)=\mu_\iota(\bm{k}_2,\theta^*)<0$. This means we should at least have solutions with surface profiles $\bm{\eta}_{\iota,\iota}$, $1\leq \iota\leq n$ in this case; see \Cref{subfig:EigvalsAlphNon0}. It should also be noted that when $ \vert \bm{\alpha}\vert $ is not small, all different intersection points can correspond to more substantially different eigenvectors and thus different surface profiles. Finally, when $ D\neq 0 $ the eigenvalues can be positive and one or more of the intersection points may satisfy $ \mu_\iota(\bm{k}_1,\theta^*)=\mu_\kappa(\bm{k}_2,\theta^*)\geq 0 $, which means we cannot find a corresponding real $ r^* $ satisfying \cref{eq:rdef}; see \Cref{subfig:EigvalsAlphNonEq}. If this is the case for all intersection points, then there are no $ \bm{\tau}^* $ satisfying \Cref{assumption:first} and we cannot obtain any solutions with \Cref{thm:exist}; see \Cref{subfig:EigvalsAlphNoSol}. It should be noted that this does not exclude small amplitude solutions to \crefrange{eq:curl}{eq:int_cond}. For example we could find solutions that are constant in one horizontal direction when $A(\bm{\tau}^*,\bm{k})$ has a one dimensional kernel, giving so called $2$\sfrac{1}{2}-dimensional waves.
 	\begin{figure}[h!]
		\begin{centering}
			\begin{subfigure}{0.45\textwidth}
				\includegraphics[scale=0.47]{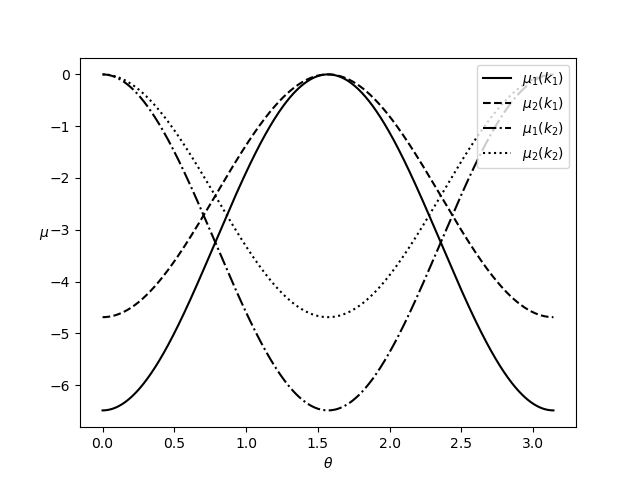}
				\caption{The eigenvalues of $ R(\bm{k}_1,\theta) $ and $ R(\bm{k}_2,\theta) $, for $ n=2 $ and $ \bm{\alpha}=0 $, plotted as functions of $ \theta $. Here $ \mu_\iota(\bm{k}_1,\theta) $ intersects $ \mu_\kappa(\bm{k}_2,\theta) $ for any $ 1\leq \iota,\kappa\leq n $. The corresponding solutions have surface profiles $\bm{\eta}_{1,1}(\theta^*)$, $\bm{\eta}_{1,2}(\theta^*)$, $\bm{\eta}_{2,1}(\theta^*)$ and $\bm{\eta}_{2,2}(\theta^*)$ for two different $\theta^*$ each, but $\bm{\alpha}=0$ so $\bm{\eta}_{\iota,\kappa}(\theta^*_1)=\bm{\eta}_{\iota,\kappa}(\theta^*_2)$.}
				\label{subfig:EigvalsAlph0}
			\end{subfigure}
			\hfill
			\begin{subfigure}{0.45\textwidth}
				\includegraphics[scale=0.47]{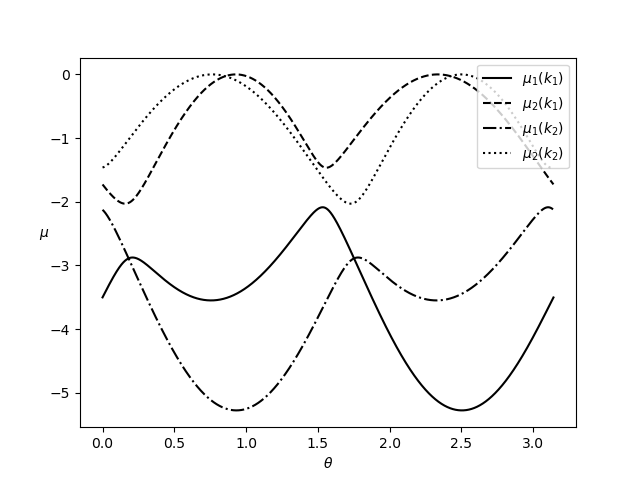}
				\caption{The eigenvalues of $ R(\bm{k}_1,\theta) $ and $ R(\bm{k}_2,\theta) $, for $ n=2 $ and $ \bm{\alpha}\neq 0 $ such that $ D=0 $, plotted as functions of $ \theta $. Here there are no intersection points  between $ \mu_\iota(\bm{k}_1,\theta) $ and $ \mu_\kappa(\bm{k}_2,\theta) $ for $ \iota\neq \kappa $. The corresponding solutions have surface profiles $\bm{\eta}_{1,1}(\theta^*)$ and $\bm{\eta}_{2,2}(\theta^*)$ for two different $\theta^*$ each.}
				\label{subfig:EigvalsAlphNon0}
			\end{subfigure}\\
			\begin{subfigure}{0.45\textwidth}
				\includegraphics[scale=0.47]{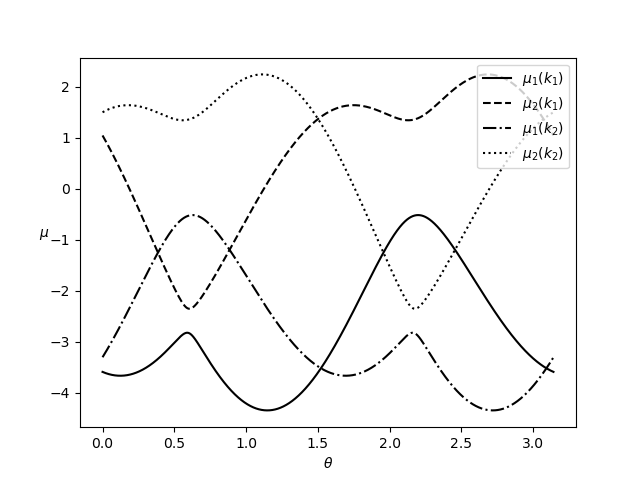}
				\caption{The eigenvalues of $ R(\bm{k}_1,\theta) $ and $ R(\bm{k}_2,\theta) $, for $ n=2 $ and $ \bm{\alpha}\neq 0 $ such that $ D\neq 0 $, plotted as functions of $ \theta $. Here $ \mu_\iota(\bm{k}_1,\theta) $ intersects $ \mu_\kappa(\bm{k}_2,\theta) $ for any $ 1\leq \iota,\kappa\leq n $, but the intersections between $ \mu_2(\bm{k}_1,\theta) $ and $ \mu_2(\bm{k}_2,\theta) $ lie above $ 0 $. The corresponding solutions have surface profiles $\bm{\eta}_{1,1}(\theta^*)$, $\bm{\eta}_{1,2}(\theta^*)$ and $\bm{\eta}_{2,1}(\theta^*)$ for two different $\theta^*$ each.}
				\label{subfig:EigvalsAlphNonEq}
			\end{subfigure}
			\hfill
			\begin{subfigure}{0.45\textwidth}
				\includegraphics[scale=0.47]{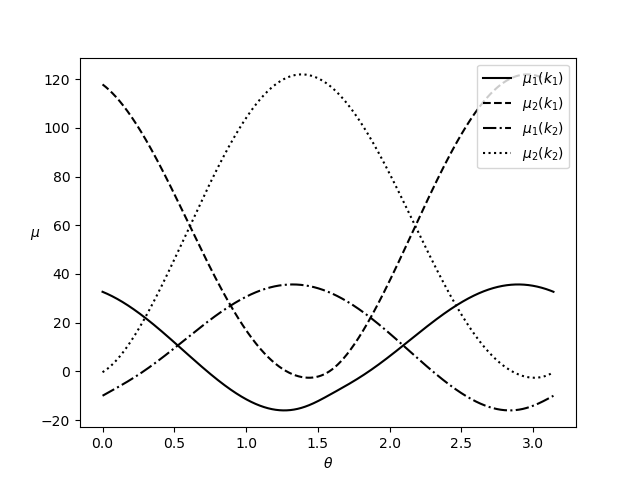}
				\caption{The eigenvalues of $ R(\bm{k}_1,\theta) $ and $ R(\bm{k}_2,\theta) $, for $ n=2 $ and $ \bm{\alpha}\neq 0 $ such that $ D\neq 0 $, plotted as functions of $ \theta $. Here $ \mu_\iota(\bm{k}_1,\theta) $ intersects $ \mu_\kappa(\bm{k}_2,\theta) $ for any $ 1\leq \iota,\kappa\leq n $, but all intersections lie above $ 0 $, so there are no $ \bm{\tau}^* $ satisfying \Cref{assumption:first}. Thus we cannot obtain any solutions with \Cref{thm:exist} in this case.}
				\label{subfig:EigvalsAlphNoSol}
			\end{subfigure}
		\end{centering}
		\caption{In this figure we illustrate the eigenvalues of $ R(\bm{k}_1,\theta) $ and $ R(\bm{k}_2,\theta) $ for $ n=2 $ and four different choices of $ \bm{\alpha} $.}
		\label{fig:Eigvals}
	\end{figure}
\newpage
\appendix
\section{A multi parameter bifurcation theorem}\label{sec:AppendixBif}
Let $\mathcal{C}=\mathbb{R}^n$, $\mathcal{X}$ and $\mathcal{Y}$ be Banach spaces, and $\mathcal{X}_i=\spann\{x_i\}$ and $\mathcal{Y}_i=\spann\{y_i\}$, $i=1,\ldots,n$ be one dimensional subspaces of $\mathcal{X}$ and $\mathcal{Y}$ respectively.
This means that we can write
\begin{align*}
\mathcal{X}&=\left(\bigoplus_{i=1}^n \mathcal{X}_i\right)\oplus \tilde{\mathcal{X}},\\
\mathcal{Y}&=\left(\bigoplus_{i=1}^n \mathcal{Y}_i\right)\oplus \tilde{\mathcal{Y}},
\end{align*}
where $\tilde{\mathcal{X}}$ and $\tilde{\mathcal{Y}}$ are closed subspaces.
This decomposition allows us to define the projections $Q_i$ and $P_i$, $i=1,\ldots,n$, which are projections onto $\mathcal{X}_i$ and $\mathcal{Y}_i$ along
\begin{align*}
\hat{\mathcal{X}}_i\oplus \tilde{\mathcal{X}}&\coloneqq\left(\bigoplus_{
	\begin{subarray}
	kj=1\\
	j\neq i
	\end{subarray}
}^n 
\mathcal{X}_j\right)\oplus \tilde{\mathcal{X}}\\
\intertext{and}
\hat{\mathcal{Y}}_i\oplus \tilde{\mathcal{Y}}&\coloneqq \left(\bigoplus_{
	\begin{subarray}
	kj=1\\
	j\neq i
	\end{subarray}
}^n 
\mathcal{Y}_j\right)\oplus \tilde{\mathcal{Y}}\\
\end{align*}
respectively.
Moreover, in this section we let $P=\sum_{i=1}^n P_i$ and $Q=\sum_{i=1}^n Q_i$.
Below is a bifurcation result to solve an equation of the form
\begin{equation}\label{eq:Bifurcation_equation}
	F[x,\bm{c}]=0,
\end{equation}
for an operator $ F:\mathcal{X}\times \mathcal{C}\to \mathcal{Y} $. When $ n=1 $ this result coincides with the Crandall-Rabinowitz bifurcation theorem.
\begin{theorem}\label{thm:bif}
	If $F\in C^k(\mathcal{X}\times\mathcal{C},\mathcal{Y})$, with $k\in \mathbb{Z}$, such that $ k\geq 2 $, $ k=\infty $ or $ k=\omega $ (that is $ F $ is analytic), is an operator with the following properties:
	\begin{itemize}
		\item[(i)] $F[0,\bm{c}]=0$ for all $\bm{c}\in \mathcal{C}$ and there exists $\bm{c}^*\in \mathcal{C}$ such that $D_1F[0,\bm{c}^*]:\mathcal{X}\to \mathcal{Y}$ is a Fredholm operator of index $0$
		\item[(ii)] The kernel of $D_1F[0,\bm{c}^*]$ is $n$-dimensional and given by $\bigoplus_{i=1}^n \mathcal{X}_i$.
		\item[(iii)] If $P_iD_1D_{j+1}F[0,\bm{c}^*](x_i,c_j-c_j^*)=\nu_{ij}(c_j-c^*_j)y_i$, then the matrix $\bm{\nu}$ given by
		\[
		(\bm{\nu})_{i,j}=\nu_{ij}
		\]
		is invertible.
		\item[(iv)] There exist closed subspaces $\tilde{\mathcal{X}}_i$ of $\tilde{\mathcal{X}}$ and $\tilde{\mathcal{Y}}_i$ of $\tilde{\mathcal{Y}}$ for each $i=1,\ldots,n$ such that
		\begin{align*}
		F(\hat{\mathcal{X}}_i\oplus \tilde{\mathcal{X}_i},\bm{c})\subseteq \hat{\mathcal{Y}}_i\oplus\tilde{\mathcal{Y}}_i,
		\end{align*}
		and
		\begin{align*}
		(I-P_i)D_1F[0,\bm{c}^*]\vert_{\hat{\mathcal{X}}_i\oplus \tilde{\mathcal{X}}_i}:\hat{\mathcal{X}}_i\oplus \tilde{\mathcal{X}}_i\to   \hat{\mathcal{Y}}_i\oplus \tilde{\mathcal{Y}}_i
		\end{align*}
		is a Fredholm operator of index $0$ with kernel $\hat{\mathcal{X}}_i$.
	\end{itemize}
	Then there exists an $\epsilon$ such that for every $\bm{s}=(s_1,\ldots,s_n)\in B_\epsilon(0)=\{\bm{s}\in\mathbb{R}^n:\vert \bm{s}\vert<\epsilon\}$ equation \eqref{eq:Bifurcation_equation} has a solution $(x[\bm{s}],\bm{c}[\bm{s}])$.
	Moreover, $(x[\cdot],\bm{c}[\cdot])\in C^{k-1}(B_\epsilon(0),\mathcal{X}\times \mathcal{C})$ and
	\[
	x[\bm{s}]=\sum_{i=1}^n s_ix_i+o(\vert \bm{s}\vert),\qquad \bm{c}[\bm{s}]=\bm{c}^*+\mathcal{O}(\vert \bm{s}\vert).
	\]
\end{theorem}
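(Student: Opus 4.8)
The plan is to carry out a Lyapunov--Schmidt reduction adapted to the product structure encoded in conditions \textit{(ii)}--\textit{(iv)}: this reduces \eqref{eq:Bifurcation_equation} to $n$ scalar equations, each of which factors through the corresponding amplitude $s_i$, after which an application of the implicit function theorem solves the reduced system for $\bm{c}$ as a function of $\bm{s}$.

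First I would write $x=\sum_{i=1}^n s_ix_i+w$ with $w\in\tilde{\mathcal{X}}$ and split \eqref{eq:Bifurcation_equation} using $I-P$ and $P=\sum_iP_i$, obtaining the auxiliary equation $(I-P)F[\sum_is_ix_i+w,\bm{c}]=0$ together with the bifurcation equations $P_iF[\sum_js_jx_j+w,\bm{c}]=0$, $i=1,\dots,n$. Since $D_1F[0,\bm{c}^*]$ is Fredholm of index $0$ with kernel exactly $\bigoplus_i\mathcal{X}_i$, condition \textit{(iv)} forces $\operatorname{range}D_1F[0,\bm{c}^*]=\tilde{\mathcal{Y}}$ (each $\mathcal{Y}_i$ sits in a complement of the range and $\tilde{\mathcal{Y}}$ has the right codimension), so $P_i$ annihilates that range and $(I-P)D_1F[0,\bm{c}^*]\vert_{\tilde{\mathcal{X}}}\colon\tilde{\mathcal{X}}\to(I-P)\mathcal{Y}=\tilde{\mathcal{Y}}$ is an isomorphism; the same argument restricted to the invariant slice of \textit{(iv)} shows $D_1F[0,\bm{c}^*]\vert_{\tilde{\mathcal{X}}_i}\colon\tilde{\mathcal{X}}_i\to\tilde{\mathcal{Y}}$ is an isomorphism as well. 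Because $F\in C^k$ with $k\geq 2$, the implicit function theorem then yields a $C^k$ map $w=w(\bm{s},\bm{c})$ near $(0,\bm{c}^*)$ solving the auxiliary equation; from $F[0,\bm{c}]=0$ we get $w(0,\bm{c})=0$ for all $\bm{c}$, and differentiating the auxiliary equation gives $\partial_{s_i}w(0,\bm{c}^*)=0$.

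Next I would substitute $w$ into the bifurcation equations, writing $P_iF[\sum_js_jx_j+w(\bm{s},\bm{c}),\bm{c}]=\Phi_i(\bm{s},\bm{c})y_i$. The crucial structural point is that $\Phi_i$ vanishes on $\{s_i=0\}$: when $s_i=0$ the argument $\sum_{j\neq i}s_jx_j$ lies in $\hat{\mathcal{X}}_i$, and by uniqueness in the implicit function theorem applied inside the invariant slice of \textit{(iv)} (using the isomorphism $D_1F[0,\bm{c}^*]\vert_{\tilde{\mathcal{X}}_i}\colon\tilde{\mathcal{X}}_i\to\tilde{\mathcal{Y}}$) the corresponding $w$ lies in $\tilde{\mathcal{X}}_i$, so $F[\,\cdot\,,\bm{c}]\in\hat{\mathcal{Y}}_i\oplus\tilde{\mathcal{Y}}_i=\ker P_i$. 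Taylor's formula with integral remainder then gives $\Phi_i(\bm{s},\bm{c})=s_i\Psi_i(\bm{s},\bm{c})$ with $\Psi_i\in C^{k-1}$ (this division by $s_i$ is precisely where the single derivative is lost). It remains to solve $\Psi_i(\bm{s},\bm{c})=0$, $i=1,\dots,n$, for $\bm{c}$. At $\bm{s}=0$ one has $\Psi_i(0,\bm{c}^*)=\partial_{s_i}\Phi_i(0,\bm{c}^*)=P_iD_1F[0,\bm{c}^*](x_i)=0$ since $x_i\in\ker D_1F[0,\bm{c}^*]$ and $\partial_{s_i}w(0,\bm{c}^*)=0$; differentiating once more, using $\partial_{c_j}w(0,\bm{c}^*)=0$ and that $P_i$ kills $\operatorname{range}D_1F[0,\bm{c}^*]$, yields $\partial_{c_j}\Psi_i(0,\bm{c}^*)=\nu_{ij}$. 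By \textit{(iii)} the matrix $\bm{\nu}$ is invertible, so the implicit function theorem produces $\bm{c}=\bm{c}[\bm{s}]\in C^{k-1}$ with $\bm{c}[0]=\bm{c}^*$ and $\Psi_i(\bm{s},\bm{c}[\bm{s}])=0$ for all $i$. Then $x[\bm{s}]:=\sum_is_ix_i+w(\bm{s},\bm{c}[\bm{s}])$ and $\bm{c}[\bm{s}]$ solve \eqref{eq:Bifurcation_equation} for $\vert\bm{s}\vert<\epsilon$, and the claimed expansions follow from $w(0,\bm{c})=0$, $\partial_{s_i}w(0,\bm{c}^*)=0$, and $\bm{c}[\bm{s}]=\bm{c}^*+\mathcal{O}(\vert\bm{s}\vert)$.

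The hard part is the bookkeeping around \textit{(iv)}: unlike the case $n=1$ (Crandall--Rabinowitz), the \emph{single} auxiliary solution $w(\bm{s},\bm{c})$ must be simultaneously compatible with all $n$ invariant decompositions $\mathcal{X}=\mathcal{X}_i\oplus\hat{\mathcal{X}}_i\oplus\tilde{\mathcal{X}}_i$, so that each $\Phi_i$ genuinely factors through $s_i$; making this rigorous rather than heuristic requires matching the global implicit-function solution against the solutions obtained on the smaller invariant spaces by uniqueness, and it is also this step that pins down $\operatorname{range}D_1F[0,\bm{c}^*]=\tilde{\mathcal{Y}}$. Everything else is a routine, if somewhat lengthy, application of the chain rule and the implicit function theorem, including its analytic version in the case $k=\omega$.
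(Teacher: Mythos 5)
Your proposal is correct and follows essentially the same route as the paper: a Lyapunov--Schmidt reduction, followed by using condition \textit{(iv)} together with uniqueness in the implicit function theorem on the invariant slices $\hat{\mathcal{X}}_i\oplus\tilde{\mathcal{X}}_i$ to show that the reduced bifurcation functions vanish on $\{s_i=0\}$, factoring out $s_i$, and closing with the implicit function theorem via the invertibility of $\bm{\nu}$. Your explicit identification of $\operatorname{range}D_1F[0,\bm{c}^*]=\tilde{\mathcal{Y}}$ and the Taylor-with-integral-remainder justification of the $C^{k-1}$ division by $s_i$ are slightly more detailed than the paper's write-up, but the argument is the same.
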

\begin{remark} The conditions \textit{(i)},\textit{(ii)}, and \textit{(iii)} are clear analogues to the standard local bifurcation theorem by Crandall \& Rabinowitz.
	Condition \textit{(iv)} has no analogue because it is superfluous if $n=1$.
	In the case when $n\geq 2$ it is necessary unless we also relax the conclusion of the theorem. Moreover, separates the domain and codomain of the operators in a way that makes the proof of the theorem quite straightforward.
\end{remark}
\begin{proof} We begin by performing a Lyapunov Schmidt reduction.
	Writing $x=\sum_{i=1}^n s_ix_i+\tilde{x}$ where $s_ix_i=Q_ix$ and $\tilde{x}=(I-Q)x$ we obtain the equations
	\begin{align*}
	(I-P)F\left[\sum_{j=1}^n s_jx_j+\tilde{x},\bm{c}\right]&=0,\\
	P_iF\left[\sum_{j=1}^n s_jx_j+\tilde{x},\bm{c}\right]&=0&& \qquad i=1,\ldots,n.
	\end{align*}
	By assumption \textit{(i)} we can apply the implicit function theorem to obtain $\tilde{x}[\bm{s},\bm{c}]$ that solves the first equation.
	We note that $\tilde{x}[0,\bm{c}]=0$ and $\partial_{s_i}\tilde{x}[0,\bm{c}^*]=0$, $i=1,\ldots,n$.
	Moreover, by assumption \textit{(iv)} we can consider the restricted operator $F:\hat{\mathcal{X}}_i\oplus \tilde{\mathcal{X}}_i\to \hat{\mathcal{Y}}_i\oplus \tilde{\mathcal{Y}}_i$ and perform a Lyapunov-Schmidt reduction.
	It follows that  $\tilde{x}[\bm{s},\bm{c}]\vert_{s_i=0}\in \hat{\mathcal{X}}_i\oplus \tilde{\mathcal{X}}_i$.
	Due to this fact and assumption \textit{(iv)} we get that
	\[
	P_iF\left.\left[\sum_{j=1}^n s_jx_j+\tilde{x}[\bm{s},\bm{c}],\bm{c}\right]\right\vert_{s_i=0}=0
	\]
	This means that we can write
	\[
	P_iF\left[\sum_{j=1}^n s_jx_j+\tilde{x}[\bm{s},\bm{c}],\bm{c}\right]=s_iH_i(\bm{s},\bm{c})
	\]
	and solve
	\begin{equation}\label{eq:bif_fin_dim_eqs}
	H_i(\bm{s},\bm{c})=0,\qquad i=1,\ldots,n.
	\end{equation}
	instead. 
	The functions $H_i$ are differentiable and 
	\begin{align*}
	H_i(0,\bm{c}^*)&=P_iD_1 F[0,\bm{c}^*](x_i+\partial_{s_i}\tilde{x}[0,\bm{c}^*])=0,\\
	\partial_{c_j}H_i(0,\bm{c}^*)(c_j-c_j^*)&=P_iD_1 F[0,\bm{c}^*](\partial_{s_i}\partial_{c_j}\tilde{x}[0,\bm{c}^*](c_j-c_j^*))\\
	&\qquad+P_iD_1D_{j+1} F[0,\bm{c}^*](x_i+\partial_{t_i}\tilde{x}(0,\bm{c}^*),c_j-c_j^*)\\
	&=P_iD_1D_{j+1} F[0,\bm{c}^*](x_i,c_j-c_j^*)\\
	&=\nu_{ij}(c_j-c^*_j)y_i.
	\end{align*}
	Since $\bm{\nu}$ is invertible this means we can apply the implicit function theorem to obtain a differentiable function $\bm{c}(\bm{t})$ defined in $B_{\epsilon}(0)$ that solves \cref{eq:bif_fin_dim_eqs}.
	
	We end by noting that we do not obtain uniqueness due to the fact that if $s_i=0$ then $P_iF(x,\bm{c})=0$ is not equivalent to $H_i(\bm{s},\bm{c})=0$.
\end{proof}
\section*{acknowledgements} This work was carried out during the tenure of an ERCIM 'Alain Bensoussan' Fellowship Programme. The Author would also like to thank Mats Ehrnström for constructive comments on the structure of the article.
\bibliographystyle{siam}
\bibliography{InternalWavesWithVorticity.bib}
\end{document}